\newcommand{\p}{\mathcal{P}}
\newcommand{\G}{\mathcal{G}}
\newcommand{\W}{\mathcal{W}}
\newcommand{\E}{\mathcal{E}}
\newcommand{\M}{\mathcal{M}}
\newcommand{\Oo}{\mathcal{O}}
\newcommand{\Mod}{\text{Mod}}
\newcommand{\R}{\mathbb{R}}
\newcommand{\Z}{\mathbb{Z}}
\newcommand{\N}{\mathbb{N}}
\renewcommand{\S}{\mathcal{S}}
\newtheorem{theorem}{Theorem}[section]
\theoremstyle{plain}
\newtheorem{lem}[theorem]{Lemma}
\newtheorem{cla}[theorem]{Claim}
\newtheorem{cor}[theorem]{Corollary}
\newtheorem{rem}[theorem]{Remark}
\newtheorem{prop}[theorem]{Proposition}
\newtheorem{ques}{Question}
\numberwithin{equation}{subsection}
\theoremstyle{definition}
\newtheorem{defi}[theorem]{Definition}
\title{Orbits of non-simple closed curves on a surface}
\author{Jenya Sapir}
\begin{document}

\begin{abstract}
 The mapping class group of a surface $\S$ acts on the set of closed geodesics on $\S$. This action preserves self-intersection number. In this paper, we count the orbits of curves with at most $K$ self-intersections, for each $K \geq 1$. (The case when $K=0$ is already known.) We also restrict our count to those orbits that contain geodesics of length at most $L$, for each $L >0$. This result complements a recent result of Mirzakhani, which gives the asymptotic growth of the number of closed geodesics of length at most $L$ in a single mapping class group orbit. Furthermore, we develop a new, combinatorial approach to studying geodesics on surfaces, which should be of independent interest.

\end{abstract}

\maketitle

\section{Introduction}


\subsection{The main question}
Let $\S$ be a compact genus $g$ surface with $n$ boundary components, and let $X$ be a hyperbolic metric on $\S$.
Define $\G^c$ to be the set of closed geodesics on $\S$.  Let 
\[
 \G^c(L,K) = \{\gamma \in \G^c \ | \ l(\gamma) \leq L, i(\gamma, \gamma) \leq K\}
\]
be the set of closed geodesics of length at most $L$, with at most $K$ self-intersections, where $l(\gamma)$ is the length of $\gamma$ in $X$ and $i(\gamma, \gamma)$ is its geometric self-intersection number.

We are motivated by the following question about the size of $\G^c(L,K)$:
\begin{ques}
\label{ques:GeodGrowth}
 If $K = K(L)$ is a function of $L$, what can be said about the asymptotic growth of $\#\G^c(L,K)$ as $L$ goes to infinity?
\end{ques}
For a history of results related to this question, see Section \ref{sec:CountHistory}.

Mirzakhani provides an answer when $K$ is constant in \cite{Mir16}, using the following method.  Let $\Mod_\S$ be the mapping class group of $\S$. Because geodesics are unique in their free homotopy class (by, e.g. \cite[Proposition 1.4]{Primer}) we have that $\Mod_\S$ acts on $\G^c$. 
Let $\Mod_\S \cdot \gamma$ denote the orbit of $\gamma \in \G^c$. If
\[
 s(L,\gamma) = \# \{\gamma' \in \Mod_\S \cdot \gamma \ | \ l(\gamma') \leq L\}
\]
then Mirzakhani shows that
\[
 s(L,\gamma) \sim n_\gamma n_X L^{6g-6+2n}
\]
where $n_\gamma$ is a constant depending only on $\Mod_\S \cdot \gamma$, and $n_X$ is a constant depending only on the metric $X$ \cite{Mir16}. Note that we write $A(L) \sim B(L)$ if $\lim_{L \rightarrow \infty} \frac AB = 1$.

All closed geodesics in the same orbit have the same number of self-intersections. So it makes sense to consider the set
\[
 \Oo(\cdot, K) = \{\Mod_\S \cdot \gamma \ | \ \gamma \in \G^c, i(\gamma, \gamma) \leq K\}
\]
Even though there are infinitely many closed geodesics with at most $K$ self-intersections, they fall into finitely many $\Mod_\S$ orbits. So it turns out that $\Oo(\cdot, K)$ is a finite set. This fact is well known, but we include a quick explanation in Section \ref{sec:WhyFinite} for completeness.

When $K$ is constant, Mirzakhani gets asymptotic growth for $\G^c(L,K)$ by summing over all orbits in $\Oo(\cdot,K)$. In particular,
\[
 \# \G^c(L,K) \sim c_K \cdot n_X L^{6g-6+2n}
\]
where 
\[
 c_K = \sum_{\Oo(\cdot, K)} n_\gamma
\]

Knowing the asymptotic growth of $c_K$ as $K$ goes to infinity would  give a complete answer to Question \ref{ques:GeodGrowth}. This would require us to find the asymptotic growth of $\#\Oo(\cdot, K)$ as $K$ goes to infinity, but this is not yet known.

\subsection{Main results}
In this paper, we give bounds on $\#\Oo(\cdot, K)$. We get the following result as a corollary to Theorems \ref{thm:Main} and \ref{thm:LowerBound} below:
\begin{theorem}
For all $K \geq 1$,
\[
\frac{1}{12} 2^{\sqrt{\frac{ K}{12}}} \leq \#\Oo(\cdot, K) \leq e^{d_\S \sqrt K \log d_\S \sqrt K}
\]
where $d_\S$ is a constant depending only on $\S$.
\end{theorem}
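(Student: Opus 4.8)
The plan is to split into the upper and lower bounds, which the paper explicitly attributes to Theorems \ref{thm:Main} and \ref{thm:LowerBound}. For the upper bound, the right framework is to encode a geodesic $\gamma$ with $i(\gamma,\gamma)\le K$ by combinatorial data that is invariant under $\Mod_\S$ but determines the orbit. The natural candidate is the $4$-valent graph $G_\gamma$ on the surface whose vertices are the self-intersection points of $\gamma$ (so at most $K$ vertices) and whose edges are the arcs of $\gamma$ between consecutive crossings (so at most $2K$ edges), together with the data of how a regular neighborhood of $G_\gamma$ sits inside $\S$ — i.e., which complementary regions are disks/annuli/etc. Two curves with $\Mod_\S$-equivalent "filling data" of this kind lie in the same orbit by the change-of-coordinates principle, so $\#\Oo(\cdot,K)$ is bounded by the number of such combinatorial types. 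First I would make precise that each orbit has a representative whose associated graph fills a subsurface, reducing to counting (isotopy classes of) fillings of subsurfaces of $\S$ by $4$-valent graphs with a bounded number of vertices; this is where the independent-interest "combinatorial approach" promised in the abstract enters. Second I would count: the number of $4$-valent graphs with $v\le K$ vertices, ribbon structures on them, and ways to cap off the boundary, is crudely at most $(Cv)^{Cv}$ for a universal $C$; feeding in the geometric input that a curve with $K$ self-intersections actually needs only on the order of $\sqrt K$ of "complexity budget" in the relevant sense (a filling curve on a surface of complexity $\xi$ has at least $\sim\xi^2$ self-intersections, so $\xi \lesssim \sqrt K$) replaces the exponent $K$ by $\sqrt K$, giving $e^{d_\S\sqrt K\log(d_\S\sqrt K)}$.

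For the lower bound, I would exhibit at least $\tfrac1{12}2^{\sqrt{K/12}}$ distinct orbits by explicit construction. Fix a handle (or pair of pants) and take curves that wind around it in combinatorially distinct ways: concatenating $m$ Dehn-twist-like loops with a choice at each stage produces on the order of $2^{m}$ curves that are pairwise non-equivalent (distinguished, say, by an invariant such as the ordered sequence of subsurfaces they fill, or by the structure of $G_\gamma$), and whose self-intersection numbers grow quadratically, $i(\gamma,\gamma)\le 12 m^2$ with the constant chosen so the arithmetic matches. Solving $12m^2 = K$ gives $m=\sqrt{K/12}$ distinct building blocks and hence $\ge \tfrac1{12}2^{\sqrt{K/12}}$ orbits once we account for the at-most-$12$-fold ambiguity in recovering $m$ from $K$ (i.e., $K$ between $12m^2$ and $12(m+1)^2$). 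The two halves together give the stated inequality.

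The main obstacle is the upper bound, and specifically step one: showing that the $\Mod_\S$-orbit of a non-simple geodesic is genuinely determined by a finite, explicitly boundable combinatorial gadget, and that the gadget has size controlled by $\sqrt K$ rather than $K$. The finiteness itself is the fact quoted in Section \ref{sec:WhyFinite}, but turning it into an effective count requires (a) a canonical-form statement — every orbit contains a geodesic whose crossing graph fills its carrying subsurface and is "tight" — and (b) the quadratic isoperimetric-type inequality relating self-intersection number to the complexity of the filled subsurface, so that the number of edges one must track, and the number of ways to glue the complementary regions back into $\S$, is $O(\sqrt K)$ up to the logarithmic factor. Establishing (a) cleanly and getting the constants in (b) into the form $d_\S\sqrt K$ is the technical heart; the raw enumeration of ribbon graphs afterward is routine.
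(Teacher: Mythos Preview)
Your lower-bound sketch is in the right spirit and matches the paper's approach: it cites a construction on a pair of pants (from \cite{SapirL}) producing exponentially many pairwise inequivalent curves whose self-intersection number grows quadratically, and then uses that the stabilizer of a pair of pants in $\Mod_\S$ is finite to pass from curves to orbits.

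The upper bound, however, has a genuine gap. Your proposed invariant is the $4$-valent crossing graph $G_\gamma$ together with its embedding data, and you correctly note that this has $v\le K$ vertices and $\le 2K$ edges. The count of such ribbon-graph/gluing types is then on the order of $(CK)^{CK}$ --- exactly the crude bound the paper writes down in Section~\ref{sec:WhyFinite} and explicitly flags as ``much worse than the one in Theorem~\ref{thm:Main}''. The step where you ``replace the exponent $K$ by $\sqrt K$'' does not follow from the stated input: the fact that a filling curve on a subsurface of complexity $\xi$ forces $\xi\lesssim\sqrt K$ bounds the \emph{topology of the complement}, not the number of vertices of $G_\gamma$. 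The graph still has $K$ vertices and $2K$ edges regardless of how simple the filled subsurface is, so the ribbon-graph count stays at $(CK)^{CK}$. You would need a genuinely different combinatorial encoding whose size is $O(\sqrt K)$, and nothing in your outline produces one.

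The paper's route is quite different. It fixes a finite list of representative pants decompositions $\Pi_1,\dots,\Pi_l$ (one per $\Mod_\S$-orbit) and encodes $\gamma$ by a cyclic word $w_\Pi(\gamma)=b_1s_1\dots b_ns_n$ in hexagon edges. The crucial point is Proposition~\ref{prop:NicePants}: every $\gamma$ with $i(\gamma,\gamma)=K$ admits a pants decomposition $\Pi$ with $i(\gamma,\Pi)\le c_\S\sqrt K$, which forces the number $n$ of seam letters to be $\lesssim\sqrt K$; further work (Dehn-twisting to minimize bridging subwords, plus an analysis of interior subarcs) gives $\sum|b_i|\lesssim K$. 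So each orbit is represented by a word with total length $\lesssim K$ but only $\lesssim\sqrt K$ ``block boundaries''. Counting such words is a stars-and-bars problem: roughly $(8m)^{\sqrt K}\binom{K+\sqrt K}{\sqrt K}$, which by Lemma~\ref{lem:ABchooseB} is $e^{d_\S\sqrt K\log(d_\S\sqrt K)}$. The $\sqrt K$ enters not through the complexity of a filled subsurface but through the intersection number of $\gamma$ with a carefully chosen pants decomposition --- this is the idea your outline is missing.
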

This gives bounds on the constant $c_K$ above.

Note that the size of $\Oo(\cdot,0)$ is well-known. For example, if $\S$ is a closed genus $g$ surface, we have 
\[
 \# \Oo(\cdot,0) = \lfloor \frac g2 \rfloor + 1
\]
See, for example, \cite[Section 1.3]{Primer} for a complete proof, and Section \ref{sec:WhyFinite} of this paper for the main idea of why this is true.

For arbitrary $K$, we break down the set $\Oo(\cdot,K)$ further. If we choose $L$ small enough, then not all orbits $\Mod_\S \cdot \gamma$ contain a curve of length at most $L$ (see Section \ref{NotAllL}).  So we define
\[
 \Oo(L,K) = \{\Mod_\S \cdot \gamma \in \Oo(\cdot, K)\ | \ \exists \gamma' \in \Mod_\S \cdot \gamma, l(\gamma') \leq L\}
\]
That is, this is the set of orbits $\Mod_\S \cdot \gamma$ where all curves have at most $K$ self-intersections, and at least one curve has length at most $L$.

Then we bound $\#\Oo(L,K)$ as follows:
\begin{theorem}
 \label{thm:Main}
For each $L,K > 0$,
 \[
\# \Oo(L,K) \leq \min \left \{  e^{d_\S \sqrt K \log \left ( d_X \frac{L}{\sqrt K} + d_X \right )}, e^{d_\S \sqrt K \log d_\S \sqrt K} \right \}
  \]
where $d_\S$ is a constant depending only on $\S$ and $d_X$ is a constant depending only on $X$.
\end{theorem}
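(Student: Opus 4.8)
Here is the approach I would take.

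The plan is to show that the orbit $\Mod_\S\cdot\gamma$ of a geodesic $\gamma$ with $i(\gamma,\gamma)\le K$ is determined by a piece of combinatorial data of size $O(\sqrt K)$, and then to count such data. Fix once and for all a pants decomposition $\mathcal P=\{P_1,\dots,P_\xi\}$ of $\S$, where $\xi=3g-3+n$ depends only on $\S$. By the change-of-coordinates principle (see \cite{Primer}), it suffices to bound the number of possibilities, up to $\Mod_\S$, for the combinatorial picture of $\gamma$ cut along $\mathcal P$ --- a combinatorial spine of $\gamma$: the intersection numbers $m_i=i(\gamma,P_i)$, and, inside each pair of pants of $\S\setminus\mathcal P$, the pattern in which the $m_i$ strand-ends are joined up by the arcs of $\gamma$ (a pairing of the boundary points together with the crossing data of the arcs), taken up to homeomorphism of the piece. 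This data, together with the twisting parameters about the $P_i$ --- which are absorbed by Dehn twists in $\Mod_\S$ --- determines the orbit. (Equivalently, one may first pass to the subsurface $\S_\gamma$ that $\gamma$ fills; there are at most a constant $c_\S$ topological types of such essential subsurfaces of $\S$, and the stabilizer of $\S_\gamma$ in $\Mod_\S$ surjects via extend-by-the-identity onto a finite-index subgroup of $\Mod(\S_\gamma)$, so this passage costs only a multiplicative constant depending on $\S$; compare Section \ref{sec:WhyFinite}.)

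The heart of the argument is a quadratic self-intersection estimate. Each pants pattern splits canonically into a ``reducible'' part --- families of mutually parallel arcs together with the effect of Dehn twisting about the curves $P_i$ --- and an ``essential'' part, consisting of $N_i$ pairwise-linked strands. Dehn twists about the $P_i$ lie in $\Mod_\S$ and are normalized away, and, once this is done, parallel families carry no further information; on the other hand $N_i$ pairwise-linked strands in a pair of pants are forced to cross each other $\Omega(N_i^2)$ times, and such crossings survive in the geodesic representative. Hence the total essential complexity $N=\sum_i N_i$ satisfies $N^2\lesssim i(\gamma,\gamma)\le K$, i.e. $N\le d_\S\sqrt K$. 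Making this dichotomy precise and proving the $\Omega(N_i^2)$ lower bound on self-intersections is the step I expect to be the main obstacle; everything else is bookkeeping, together with standard facts (boundedly many topological types of multicurves and essential subsurfaces on $\S$, and the collar lemma).

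Finally I would count. The essential pattern is a choice of $N\le d_\S\sqrt K$ linked strands distributed among the $\xi$ pairs of pants with their crossing data, which is one of at most $N^{O(N)}=e^{d_\S\sqrt K\log(d_\S\sqrt K)}$ possibilities; since, absent a length constraint, the residual winding parameters of the essential strands are themselves $O(\sqrt K)$ by the same quadratic estimate, this yields the bound $e^{d_\S\sqrt K\log(d_\S\sqrt K)}$. If instead the orbit contains a geodesic of length $\le L$ in the fixed metric $X$, then the collar lemma bounds each $m_i$ in terms of $L$ and $X$, and distributing the length $L$ among the $N\le d_\S\sqrt K$ essential strands bounds the winding parameter of each by $O_X(L/\sqrt K+1)$; feeding these bounds into the same count replaces the factor $N^{O(N)}$ by $\bigl(d_XL/\sqrt K+d_X\bigr)^{O(\sqrt K)}=e^{d_\S\sqrt K\log(d_XL/\sqrt K+d_X)}$. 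Taking the smaller of the two bounds, and absorbing the reduction constant $c_\S$ into $d_\S$, gives Theorem~\ref{thm:Main}.
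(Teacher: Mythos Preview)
Your high-level strategy --- encode each orbit by combinatorial data relative to a pants decomposition, bound the size of that data by $\sqrt K$, then count --- is exactly the paper's. But the step you flag as the main obstacle really is one, and it does not reduce to the dichotomy you describe.

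The assertion that, after normalizing by Dehn twists about the $P_i$, ``parallel families carry no further information'' is not right. The multiplicities of the parallel arc-classes are the intersection numbers $m_i=i(\gamma,P_i)$, and these are unchanged by Dehn twists about the $P_i$ (indeed by any element of $\Mod_\S$ preserving $\mathcal P$). For a \emph{fixed} $\mathcal P$ they can be arbitrarily large even when $i(\gamma,\gamma)\le K$ is small --- a simple closed curve already crosses $\mathcal P$ as many times as you like. So the ``reducible'' part still carries unbounded data, and your quadratic estimate on the linked strands says nothing about it. What the paper proves instead (Proposition~\ref{prop:NicePants}) is that for every $\gamma$ with $K$ self-intersections there is a pants decomposition $\Pi$, depending on $\gamma$, with $i(\gamma,\Pi)\le c_\S\sqrt K$; equivalently, one can move $\gamma$ by $\Mod_\S$ so that the \emph{total} number of strands, not just the ``essential'' ones, is $\lesssim\sqrt K$. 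The proof is an iterative surgery on region-neighbourhoods of a candidate curve, and is not a consequence of any quadratic crossing estimate. Once this is in place, a quadratic-type bound is used only for the \emph{interior} (non-crossing) arcs, and even there it takes the form $i(\gamma_i,\gamma_j)\gtrsim\min\{|b_i|,|b_j|\}$ for suitable pairs, rather than a bare $\Omega(N^2)$ (see Section~\ref{sec:InteriorBdrySubwordIntBound}).

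Two smaller points. First, a single Dehn twist about $P_i$ shifts \emph{all} strands through $P_i$ by the same amount, so it normalizes only one degree of freedom per curve; bounding the remaining relative twisting of the bridging strands by $K$ is a separate argument (Lemma~\ref{lem:BridgingTotalLength}). Second, individual winding parameters are not $O(\sqrt K)$ or $O_X(L/\sqrt K)$: one shows that they \emph{sum} to $O(K)$, respectively to $O_X(L)+O(\sqrt K)$, and then counts sequences of $\lesssim\sqrt K$ non-negative integers with bounded sum via a stars-and-bars estimate (Lemmas~\ref{lem:SizeWLK} and~\ref{lem:ABchooseB}). Your final count has the right shape, but the inputs feeding into it need these corrections.
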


Moreover, we can deduce a lower bound on $\#\Oo(L,K)$ from our previous work. In our previous paper \cite{SapirL}, we give a lower bound on the size of $\G^c(L,K)$ for a pair of pants. Since the $\Mod_\S$ stabilizer of a pair of pants inside a surface $\S$ is finite, this gives us the following lower bound on $\Oo(L,K)$ for an arbitrary surface:


\begin{theorem}[Corollary of Theorem 1.1 in \cite{SapirL}]
\label{thm:LowerBound}
 Let $\S$ be any surface. If $L \geq 8 l_X$ and $K \geq 12$, we have that
\[
 \# \Oo(L,K) \geq \frac{1}{12} \min\{  2^{\frac{1}{8l_X}L},2^{\sqrt{\frac{ K}{ 12}}}\}
\]
where $l_X$ is a constant that depends only on the metric $X$.
\end{theorem}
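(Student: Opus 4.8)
The plan is to reduce the statement to the pair‑of‑pants count of \cite{SapirL}. First I would fix an embedded pair of pants $P\subset\S$ whose three boundary curves are geodesics of $X$, are essential in $\S$, and are pairwise non‑homotopic in $\S$; if $\S$ is itself a pair of pants, take $P=\S$, and otherwise such a $P$ exists because $\chi(\S)\le -2$ (cut $\S$ along a pants decomposition and take an interior piece). Equip $P$ with the metric $X|_P$. Since $\partial P$ is geodesic, $P$ lifts to a convex region in $\h$, so the $X$‑geodesic representative of any free homotopy class carried by $P$ stays inside $P$, coincides with the $X|_P$‑geodesic representative, and has the same self‑intersection number whether computed in $P$ or in $\S$. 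Hence $\G^c_P(L,K)\subseteq\G^c(L,K)$, where $\G^c_P(L,K)$ is the set of closed geodesics of $P$ of length $\le L$ with at most $K$ self‑intersections, and it suffices to show that the curves of $\G^c_P(L,K)$ with at least one self‑intersection meet at least $\tfrac1{12}\min\{2^{L/(8l_X)},2^{\sqrt{K/12}}\}$ distinct $\Mod_\S$‑orbits.

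The key step will be to show that every $\Mod_\S$‑orbit contains at most $12$ curves of $\G^c_P(L,K)$ that fill $P$. The only simple closed geodesics of $P$ are its three boundary geodesics, so any closed geodesic of $P$ that is not boundary‑parallel --- in particular any one with a self‑intersection --- fills $P$; and if $\gamma$ fills $P$ then $P$ is the canonical subsurface filled by $\gamma$ in $\S$ (the union of a regular neighborhood of $\gamma$ with the disk components of its complement), which is determined by the isotopy class of $\gamma$ in $\S$ once $\partial P$ is essential with no two components isotopic in $\S$. Thus, if $\phi\in\Mod_\S$ carries a filling $\gamma\subset P$ to another filling $\gamma'\subset P$, then $\phi(P)$ is isotopic to $P$, so $\phi$ restricts to an element of the extended mapping class group $\Mod^{\pm}(P)$, which has order $12$ (the orientation‑preserving subgroup, of order $6$, would already suffice); this group acts on the free homotopy classes of curves in the interior of $P$, whereas the boundary Dehn twists and the mapping classes supported in $\S\setminus P$ act trivially there. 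Since geodesics are unique in their free homotopy class, the $\Mod_\S$‑orbit of such a $\gamma$ meets the interior of $P$ in at most $12$ geodesics, so
\[
 \#\Oo(L,K)\ \ge\ \frac{1}{12}\,\#\{\gamma\in\G^c_P(L,K)\ :\ i(\gamma,\gamma)\ge 1\}.
\]

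Then I would invoke Theorem 1.1 of \cite{SapirL}, which gives, for a hyperbolic pair of pants and for $L$ and $K$ above the stated thresholds, a lower bound of the displayed shape on the number of self‑intersecting closed geodesics of length at most $L$ with at most $K$ self‑intersections; taking the relevant geometric constant of $P$ --- which depends only on $X$ --- to be $l_X$, that bound reads $\min\{2^{L/(8l_X)},2^{\sqrt{K/12}}\}$ whenever $L\ge 8l_X$ and $K\ge 12$. Combining this with the previous display gives the claim.

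The main obstacle I expect is making the ``at most $12$'' step airtight: I must check that the pair of pants can always be chosen non‑degenerate inside $\S$ (boundary essential and pairwise non‑isotopic) for every $\S$ allowed in the theorem, and apply correctly the principle that a filling curve determines its filling subsurface, including borderline cases where $\gamma$ is filling but nearly boundary‑parallel or where $\S$ has small complexity. A secondary bookkeeping point is to pin down a single constant $l_X$, depending only on $X$, that serves simultaneously as the geometric constant of $P$ in \cite{SapirL} and in the hypotheses $L\ge 8l_X$, $K\ge 12$ of the present statement.
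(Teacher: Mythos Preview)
Your proposal is correct and follows exactly the approach the paper indicates. The paper does not actually prove this theorem; it only states, just before the theorem, that ``In our previous paper \cite{SapirL}, we give a lower bound on the size of $\G^c(L,K)$ for a pair of pants. Since the $\Mod_\S$ stabilizer of a pair of pants inside a surface $\S$ is finite, this gives us the following lower bound on $\Oo(L,K)$ for an arbitrary surface.'' Your argument is precisely the unpacking of that sentence: embed a pair of pants $P$, observe that non-simple geodesics of $P$ fill $P$ so that any $\phi\in\Mod_\S$ identifying two of them must preserve $P$ up to isotopy, and then use that the action of such $\phi$ on free homotopy classes in $P$ factors through a finite group to convert the curve count from \cite{SapirL} into an orbit count with the loss of a bounded factor.
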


\subsection{Other results of interest}

We develop a combinatorial model for curves on surfaces, and use this model to prove Theorem \ref{thm:Main}. Given any $\gamma \in \G^c$ and any pants decomposition $\Pi$ of the surface $\S$ we get a word $w_\Pi(\gamma)$ that can be represented by a piecewise geodesic curve on $\S$ that winds around the pants curves and takes the shortest possible path between pants curves (see Figure \ref{fig:SurfaceProjection} and Section \ref{sec:WordsAndGeodesics}.) We can approximate the length and self-intersection number of $\gamma$ from the work $w_\Pi(\gamma)$. This allows us to translate geometric questions about curves on surfaces into combinatorial problems.

This model already has applications going beyond the scope of the current paper. For example, it was used by Aougab, Gaster, Patel and the author to show that for any $\gamma \in \G^c$, there is a hyperbolic metric $X$ so that if $i(\gamma, \gamma) = K$, then $l_X(\gamma) \leq c \sqrt K$, where $c$ is a constant depending only on the surface $\S$ \cite{AGPS16}. Moreover, the injectivity radius of $X$ is bounded below by $d / \sqrt K$, where, again, $d$ depends only on $\S$. For a closed surface, both the upper bound on length and the lower bound on the injectivity radius are sharp. 

Another result of interest is that each $\gamma \in \G^c$ has a preferred pants decomposition in the following sense.
We show that for any $\gamma \in \G^c$, one may choose a pants decomposition as follows:
  \begin{prop}
 \label{prop:NicePants}
  Let $\gamma \in \G^c$ with $i(\gamma, \gamma) = K \geq 1$. Then there exists a pants decomposition $\Pi$ so that
  \[
   i(\gamma, \Pi) \leq c_\S \sqrt K
  \]
 where the constant $c_\S$ depends only on the topology of $\S$.
 \end{prop}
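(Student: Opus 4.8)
The plan is to build the pants decomposition $\Pi$ greedily, one curve at a time, always adding the shortest simple closed curve on the relevant subsurface that is disjoint from the curves already chosen. The key input is the fact that if a simple closed curve $\alpha$ has the property that no strictly shorter (in the combinatorial/intersection sense with $\gamma$) disjoint curve can be added, then $\gamma$ cannot wind around $\alpha$ too many times without creating a lot of self-intersections. Concretely, I would argue as follows. Suppose $\alpha$ is a simple closed curve with $i(\gamma,\alpha) = m$. The $m$ strands of $\gamma$ crossing $\alpha$, together with the arcs of $\gamma$ between consecutive crossings, force self-intersections whenever two strands are "linked" along $\alpha$; a counting argument (of the type used in the earlier papers \cite{SapirL}, \cite{AGPS16}) shows $i(\gamma,\gamma) \gtrsim m^2$ up to a constant depending on how the arcs are routed, unless the arcs can be homotoped off $\alpha$ in a controlled way. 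Hence $m \lesssim \sqrt{K}$ for any single curve $\alpha$ that appears as a "tight" curve in the greedy construction.

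The steps, in order, would be: (1) Fix $\gamma$ with $i(\gamma,\gamma)=K\ge 1$ and consider the collection of essential simple closed curves on $\S$; among those intersecting $\gamma$ minimally, pick one, call it $\alpha_1$, and record $i(\gamma,\alpha_1)$. (2) Cut $\S$ along $\alpha_1$, pass to the (possibly disconnected) subsurface, and repeat, always choosing a curve realizing the minimum intersection with the restriction of $\gamma$ to the current subsurface, stopping after the $3g-3+n$ curves of a full pants decomposition have been selected. (3) Prove the key lemma: for any essential simple closed curve $\alpha$ and any closed curve $\gamma$, either $\gamma$ is disjoint from some essential curve on a component of $\S \setminus \alpha$ with smaller intersection number, or $i(\gamma,\alpha)^2 \le c \, i(\gamma,\gamma)$ for a topological constant $c$ — this is the heart of the matter and is where a careful bigon/linking argument is needed. (4) Sum the contributions: since there are at most $3g-3+n$ pants curves, $i(\gamma,\Pi) = \sum_j i(\gamma,\alpha_j) \le (3g-3+n)\max_j i(\gamma,\alpha_j) \le (3g-3+n)\sqrt{c K}$, giving the bound with $c_\S = (3g-3+n)\sqrt{c}$.

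The main obstacle I anticipate is step (3), and more precisely making the "linking forces intersections" heuristic rigorous in the presence of the subsurface structure: when I cut along $\alpha_1$ and look at $\gamma$ restricted to a component $\Sigma'$, the arcs of $\gamma$ in $\Sigma'$ that run between copies of $\alpha_1$ must be accounted for, and I must ensure that choosing a minimal curve in $\Sigma'$ genuinely controls $i(\gamma,\alpha_2)$ in terms of the \emph{same} global self-intersection budget $K$ without double-counting self-intersections across different stages of the construction. One clean way around this is to prove the quadratic lower bound $i(\gamma,\alpha)^2 \lesssim i(\gamma,\gamma)$ directly for \emph{any} simple closed curve $\alpha$ that is "tight" in the sense that $\gamma$ cannot be homotoped to reduce $i(\gamma,\alpha)$ and no disjoint curve does better — this statement is intrinsic to the pair $(\gamma,\alpha)$ and sidesteps the need to track the subsurface decomposition stage by stage. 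I expect the combinatorial word model $w_\Pi(\gamma)$ developed earlier in the paper to be exactly the tool that makes this quadratic estimate transparent: the winding of $\gamma$ around a pants curve is visible as a syllable length in the word, and two long syllables of opposite sign (or the interaction of winding with the connecting arcs) produce a predictable number of self-intersections.
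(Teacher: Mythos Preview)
Your greedy strategy of picking a simple closed curve $\alpha$ minimizing $i(\gamma,\alpha)$ and then iterating is the same outer scaffolding the paper uses, but your proposed mechanism for step~(3) has a genuine gap. The heuristic ``$m$ strands crossing $\alpha$ force $\gtrsim m^{2}$ self-intersections via linking'' is false without further input: the $m$ arcs of $\gamma$ in $\S\setminus\alpha$ can all lie in the same parallel isotopy class, in which case they contribute no self-intersections at all, and there is also no obvious shorter competitor curve in the complement. So neither branch of your dichotomy is forced. Relatedly, your suggestion to use the word model $w_{\Pi}(\gamma)$ to make the quadratic estimate transparent is circular: that model is only defined relative to a pants decomposition $\Pi$, and Proposition~\ref{prop:NicePants} is precisely about producing a good $\Pi$ in the first place. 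The syllable-length arguments you allude to are what the paper uses \emph{after} this proposition is established, not to prove it.

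The paper's missing idea is to work with the \emph{complementary regions} of $\gamma$ in $\S$, of which there are at most $\approx K$. One grows nested region-neighborhoods $N_0(\alpha)\subset N_1(\alpha)\subset\cdots$ of the minimal curve $\alpha$; since there are $\lesssim K$ regions, within $\lesssim\sqrt K$ steps either the topology of the neighborhood changes (so one can surger $\alpha$ with a short arc through $\lesssim\sqrt K$ regions to build a non-peripheral competitor $\beta$) or a layer has $\lesssim\sqrt K$ new regions (so a boundary component of the neighborhood is a competitor). In either case one obtains $i(\beta,\gamma)\le c\sqrt K+\tfrac12 i(\alpha,\gamma)$, and minimality of $\alpha$ then gives $i(\alpha,\gamma)\lesssim\sqrt K$. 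This competitor construction is exactly what your linking argument was supposed to replace, and it is where the $\sqrt K$ genuinely enters. Finally, your step~(4) summation is too optimistic: after cutting along $\alpha_1$ one has a multi-arc with $i(\gamma,\alpha_1)$ strands, and rerunning the argument on the subsurface uses the number of strands (not $\sqrt K$) as the new parameter, so the constants compound geometrically across the $3g-3+n$ stages --- the paper gets $c_\S$ of order $58^{3g-3+n}$, not $(3g-3+n)\sqrt c$.
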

 Here $i(\gamma, \Pi)$ denotes the total number of intersections between $\gamma$ and each curve in $\Pi$. An interesting further question is to find an optimal bound on the constant $c_\S$. 
 
 Note that if $\Pi$ is a generic pants decomposition of $\S$, then the best we can say is that $i(\gamma, \Pi) \leq l(\gamma) l(\Pi)$, where $l(\Pi)$ is the total length of all curves in $\Pi$. Thus, the pants decompositions found in this proposition are, indeed, special.
 
\section{Background and idea of proof}

\subsection{Prior work on Question \ref{ques:GeodGrowth}}
\label{sec:CountHistory}
The problem of counting the number of closed geodesics on a hyperbolic, and more generally, a negatively curved surface, has been studied extensively. We will briefly summarize the history here. For a more detailed summary, see \cite{SapirL}.  

If $\G^c(L)$ is the set of closed geodesics of length at most $L$, then by work of Huber \cite{Huber59}, Margulis \cite{MargulisPhD}, and many others,
\[
 \# \G^c(L) \sim \frac{e^{\delta L}}{\delta L}
\]
where $\delta$ is the topological entropy of the geodesic flow on $\S$. See \cite{Margulis04} for an excellent history of this result for various types of surfaces.

Question \ref{ques:GeodGrowth} has also been studied extensively, especially for finite $K$. When $K = 0$, the fact that the number of simple closed curves grows polynomially in $L$ was shown in \cite{BS85}, and that the growth is on the order of $L^{6g-6+2n}$ was shown in \cite{Rees81, Rivin01}. The fact that $\#\G^c(L,0)$ grows asymptotically on the order of $L^{6g-6+2n}$ was shown by Mirzakhani in \cite{Mirzakhani08}. When $K = 1$, the asymptotic growth of $\#\G^c(L,K)$ was shown by Rivin in \cite{Rivin12}. For $K$ fixed, Erlandsson-Souto showed that Mirzakhani's asymptotics for $\G^c(L,K)$ on a hyperbolic metric imply the same asymptotic growth (with different constants) in any negatively curved or flat metric. (In fact, their results give these same asymptotics when the length of a curve is measured using any geodesic current.) The author has given explicit bounds on $\#\G^c(L,K)$ in terms of both $L$ and $K$ in \cite{SapirL, SapirU}.

\subsection{A complementary result on the number of orbits}
In our work, we fix a surface $\S$ and bound $\#\Oo(\cdot, K)$ on $\S$. A complementary approach is to fix $K$, and consider the set $\Oo(\cdot, K)$ on surfaces $\S_g$ of genus $g$, as $g$ goes to infinity. To distinguish the setting, let $\Oo_g(\cdot, K)$ be the set of orbits of curves with exactly $K$ self-intersections on a genus $g$ surface. In this case, Cahn, Fanoni and Petri have shown that
\[
 \#\Oo_g(\cdot, K) \sim C_K \frac{g^{K+1}}{(K+1)!}
\]
(\cite{CFP16}.) It should be noted that the dependence of the upper bound in Theorem \ref{thm:Main} on $g$ is on the order of $e^{e^g}$, so our results are far from optimal in genus. On the other hand, the constant $C_K$ in \cite{CFP16} is related to automorphisms of ribbon graphs, and is not explicit in $K$.

\subsection{Why $\#\Oo(\cdot, K)$ is finite}
\label{sec:WhyFinite}
A very coarse bound on $\#\Oo(\cdot,K)$ can be obtained from the following observation. Take any closed geodesic $\gamma$ on $\S$, so that $i(\gamma,\gamma) \leq K$. We can treat $\gamma$ as a 4-valent graph on $\S$. This graph has at most $K$ vertices and $2K$ edges. By an Euler characteristic argument, $\gamma$ has at most $K$ complimentary regions in $\S$. 

Thus, cutting $\S$ along $\gamma$ gives us at most $K$ subsurfaces with boundary. The boundary of each subsurface is composed of at most $4K$ geodesic edges. Furthermore, the topological type of each subsurface is bounded by the topology of $\S$, in the sense that the total genus and number of totally geodesic boundaries of the subsurfaces is bounded by the genus and number of boundary components of $\S$. We can recover the $\Mod_\S$ orbit of $\gamma$ by gluing these subsurfaces back together. In fact, the collection of subsurfaces, together with a gluing pattern, uniquely determines the orbit $\Mod_\S \cdot \gamma$.

There are finitely many possible collections of subsurfaces that satisfy the above conditions, namely that their topology is bounded by the topology of $\S$, and that the total number of edges in their boundary is bounded above by $4K$. The number of ways to pair off $4K$ edges is on the order of $(2K)^{2K}$. So this observation implies that $\#\Oo(L,K)$ is finite. Note that it gives an order of growth that is much worse than the one in Theorem \ref{thm:Main} for large $K$. 

On the other hand, when $K = 0$, this argument shows that 
\[
 \#\Oo(\cdot,0) = \lfloor \frac g 2 \rfloor + 1
\]
and it can be used to get an accurate count for small $K$, as well.

\subsection{The shortest curve in a $\Mod_\S$ orbit}
\label{NotAllL}
We claim above that if $L$ is small enough, then not all orbits in $\Oo(\cdot, K)$ contain curves of length at most $L$. To see why this is true, we should understand the length of the shortest curve in a $\Mod_\S$ orbit. Let $\gamma_{short}$ denote the shortest curve in $\Mod_\S \cdot \gamma$.

If $\gamma$ is simple, then $l(\gamma_{short})$ is well understood. In particular, it cannot be shorter than the injectivity radius of $X$, denoted $r_X$. On the other hand, all hyperbolic surfaces, regardless of metric, have a simple closed curve shorter than the Bers' constant, $\epsilon_b$ \cite[Chapter 5.1]{Buser}. This fact generalizes to the shortest curve in each $\Mod_\S$ orbit of simple closed curves. So if $\Mod_\S \cdot \gamma \in \Oo(\cdot, 0)$, then 
\[
 r_X \leq l(\gamma_{short}) \leq \epsilon'_b
\]
where $\epsilon'_b$ is Bers' constant for $\Mod_\S$ orbits of simple closed curves. In particular, $\Oo(L,0) = \Oo(\cdot, 0)$ whenever $L \geq \epsilon'_b$.
 
However, the length of the shortest closed curve with $K$ self-intersections grows with $K$. In particular, if $i(\gamma, \gamma) = K$, then there are constants $c_1$ and $c_2$ depending only on $X$ so that
\[
 c_1 \sqrt K \leq l(\gamma_{short}) \leq c_2 K
\]
where there are examples of curves that satisfy both the upper and lower bounds \cite{Basmajian13,Malestein,Gaster15,AGPS16}. 

Basmajian shows the lower bound, and proves that it is tight, in \cite{Basmajian13}. The fact that $l(\gamma_{short}) \leq c_2 K$ is shown in \cite{AGPS16}. Another, simpler, method to show this upper bound was communicated to us by Malestein \cite{Malestein}. It is to homotope $\gamma$ to a rose with at most $K$ petals, and apply elements of $\Mod_\S$ to make the rose as short as possible. Finally, Gaster gives a family of curves on pairs of pants whose length grows linearly with self-intersection \cite{Gaster15}, which proves that the upper bound is tight.

So if $i(\gamma, \gamma) = K$, the orbit $\Mod_\S \cdot \gamma$ will appear in $\Oo(L,K)$ for $L$ between $c_1 \sqrt K$ and $c_2 K$.

\subsection{Further questions}
The lower bound on $l(\gamma_{short})$ is generic in the following sense. Lalley shows that if we choose a curve $\gamma_L$ length at most $L$ at random among all such curves, then $i(\gamma_L, \gamma_L) \sim \kappa (l(\gamma))^2$ almost surely, as $L$ goes to infinity \cite{Lalley96}. So if we choose an orbit $\Mod_\S\cdot \gamma$ by choosing a closed geodesic $\gamma$ of length at most $L$ at random, then $l(\gamma_{short})$ would fall in the lower range of the scale that goes from $c_1 \sqrt{i(\gamma, \gamma)}$ to $c_2 i(\gamma, \gamma)$.

It would be interesting to know whether this is generic behavior if we choose at random among all orbits of curves with $K$ self-intersections, rather than among all curves of length at most $L$.
\begin{ques}
\label{ques:Limit}
 Is it true that $\displaystyle \lim_{K \rightarrow \infty} \frac{\#\Oo(\kappa \sqrt K, K)}{\#\Oo(\cdot, K)} = 1$?
\end{ques}

An affirmative answer to this question would lead to an interesting conclusion. Note that when $L = \kappa \sqrt K$, the upper and lower bounds given by Theorems \ref{thm:Main} and \ref{thm:LowerBound} give
\[
 c_1 e^{c_1 \sqrt K} \leq \#\Oo(\kappa \sqrt K, K) \leq c_2 e^{c_2 \sqrt K}
\]
for two constants $c_1, c_2$ depending on the geometry of $\S$. So an affirmative answer would indicate that the asymptotic growth of $\#\Oo(\cdot, K)$ should be of the form $e^{c \sqrt K}$, as well.

We can generalize Question \ref{ques:Limit} as follows. Given any metric $X$, the function 
\[
\ell_X : \Mod_\S \cdot \gamma \mapsto l(\gamma_{short})
\]
can be thought of as a function
\[
 \ell : \M(\S) \times \Oo \rightarrow \R
\]
where $\M(\S)$ is the moduli space of $\S$ and $\Oo$ is the set of all $\Mod_\S$ orbits of geodesics. This function sends a pair $(Y,\Mod_\S \cdot \gamma)$ to the length of the shortest curve in $\Mod_\S \cdot \gamma$ in any lift of $Y$ to Teichm\"uller space. Note that despite the choice we must make, this function is still well-defined. In \cite{AGPS16}, we show that for each orbit $\Mod_\S \cdot \gamma$, there is some point $Y(\gamma) \in \M(\S)$ where $\ell(Y, \Mod_\S \cdot \gamma) \leq c \sqrt{i(\gamma, \gamma)}$. Assume that $Y(\gamma)$ is such a point with the largest injectivity radius. Then we can ask:
\begin{ques}
 As $K$ goes to infinity, how is the set $M_K = \{Y(\gamma) \ | \ i(\gamma, \gamma) \leq K\}$ distributed in $\M(\S)$?
\end{ques}
In particular, the answer to Question \ref{ques:Limit} is positive if $M_K$ clusters in the thick part of $\M(\S)$ as $K$ goes to infinity. The answer to this question would also be interesting if it turned out that $M_K$ is distributed according to some measure on $\M(\S)$.

\subsection{Notation}
We will often use coarse bounds in the course of this paper. Our notation for them is as follows. We write
\[
 A \lesssim B \iff \exists c > 0 \text{ s.t. } A \leq c B
\]
If we say that $A \lesssim B$ and that the constant depends only on some other variable $C$, we mean that the constant $c$ above depends only on $C$. Furthermore,
\[
 A \approx B \iff A \lesssim B \text{ and } B \lesssim A
\]

\subsection{Idea of proof of Theorem \ref{thm:Main}}
\label{sec:MainIdeaOfProof}
The proof of Theorem \ref{thm:Main} can be roughly divided into the following four parts.

\subsubsection{Words corresponding to closed geodesics}
\begin{figure}[h!]
 \includegraphics{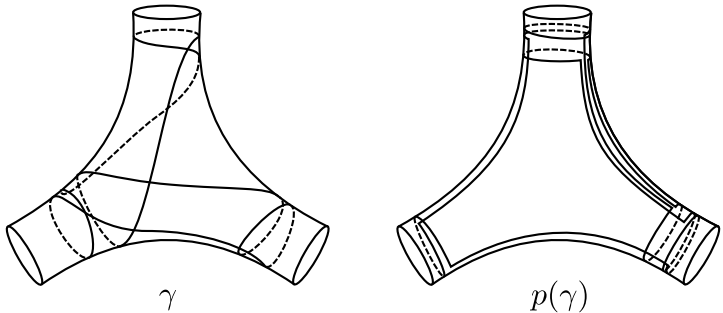}
 \caption{A curve $\gamma$ and its representative $p(\gamma)$}
 \label{fig:GammaPGamma}
\end{figure}

In Section \ref{sec:WordsAndGeodesics}, we build a combinatorial model for geodesics on $\S$ that is very similar to the one for geodesics on a pair of pants $\p$ described in 
\cite{SapirL}. In that paper, we assign a cyclic word $w(\gamma)$ to each curve $\gamma$ on $\p$ .
Specifically, the pair of pants $\p$ is decomposed into two right-angled hexagons. 
The letters of $w(\gamma)$ correspond to these (oriented) hexagon edges. Moreover, we can concatenate the hexagon edges in $w(\gamma)$ to get a path $p(\gamma)$ freely homotopic to $\gamma$ (Figure \ref{fig:GammaPGamma}).

Given a pants decomposition $\Pi$ on an arbitrary surface $\S$, we can similarly define a cyclic word $w_\Pi(\gamma)$ corresponding to each $\gamma \in \G^c$. The letters in $w_\Pi(\gamma)$ will again correspond to the edges of a hexagon decomposition coming from $\Pi$.
Concatenating the edges in $w_\Pi(\gamma)$ gives a curve freely homotopic to $\gamma$. We work with this combinatorial model for the remainder of the paper.

\subsubsection{Sets of words with special properties}
\label{Idea:SetsOfWords}
Since we only wish to count $\Mod_\S$ orbits of curves, we need to consider $\Mod_\S$ orbits of pants decompositions of $\S$. Choose a shortest representative of each $\Mod_\S$ orbit of pants decompositions. This gives us a representative list $\{\Pi_1, \dots, \Pi_l\}$ where the total length of all curves in $\Pi_i$ is the shortest in its entire $\Mod_\S$ orbit.

Given any $\Pi_i$ in the list and any length and intersection number bounds $L$ and $K$, we restrict our attention to sets of words that satisfy certain special conditions (Section \ref{sec:CountingWords}). Let $\W_i(L,K)$ be the set of words $w_{\Pi_i}(\gamma)$ so that
\begin{itemize}
 \item $|w_{\Pi_i}(\gamma)| \lesssim K$
 \item $|w_{\Pi_i}(\gamma)| \lesssim L + \sqrt K$
 \item The number of seam edges in $w_{\Pi_i}(\gamma)$ is at most $c_\S \sqrt K$, where $c_\S$ is a constant that depends only on $\S$.
\end{itemize}
where $|w_{\Pi_i}(\gamma)|$ denotes the word length of $w_{\Pi_i}(\gamma)$ and a seam edge is a hexagon edge joining two curves in $\Pi$. Note that neither of the first two conditions implies the other for all $L$ and $K$. 

\subsubsection{Each $\Mod_\S$ orbit corresponds to some $\W_i(L,K)$}
The bulk of the paper is spent in showing the following fact: For each orbit, there is some $\gamma' \in \Mod_\S \cdot \gamma$, and some pants decomposition $\Pi_i$ from the representative list so that $w_{\Pi_i}(\gamma') \in \W_i(L,K)$. First, we show that there is some $\gamma'$ and $\Pi_i$ that satisfy the two conditions that depend only on $K$ (Proposition \ref{prop:BdrySubwordIntBound}). Then we show that this choice of $\gamma'$ and $\Pi_i$ also satisfy the second condition (Lemma \ref{lem:BdrySubwordLKBounds}).

\subsubsection{Bound on $\#\Oo(L,K)$ by bounding $\#W_i(L,K)$}
The above fact allows us to define an injective map 
\[
  \Oo(L,K) \rightarrow \bigcup_{i=1}^l W_i(L,K)
\]
Thus, we can bound $\#\Oo(L,K)$ from above by bounding the size of $\W_i(L,K)$ for each $i = 1, \dots, l$. To do this, all we have to do is count the number of words $w_\Pi(\gamma)$ that satisfy the three conditions above (Section \ref{sec:CountingWords}.)

\subsection{Acknowledgements}
This work is an extension of some results from the author's thesis. She would like to thank her advisor, Maryam Mirzakhani, for the discussions and support that made this paper possible. The author would also like to thank Kasra Rafi and Ser-Wei Fu for the enlightening discussions that contributed significantly to key portions of this paper.

\section{Words and geodesics}
\label{sec:WordsAndGeodesics}

Consider an arbitrary surface $\S$. Given a pants decomposition $\Pi$ of $\S$, and any $\gamma \in \G^c$, our goal is to define a cyclic word $w_\Pi(\gamma)$ (Figure \ref{fig:SurfaceProjection}). 

Let $\Pi = \{\beta_1, \dots, \beta_m\}$ be a pants decomposition of $\S$. Here, $\beta_1, \dots, \beta_m$ are simple closed curves that cut $\S$ into pairs of pants, and include the curves in the boundary of $\S$. 

Cut each pair of pants given by $\Pi$ into two hexagons using seam edges so that 
\begin{itemize} 
\item The seam edges match up across curves in $\Pi$. That is, the hexagon decomposition looks like a 4-valent graph on the interior of $\S$.
\item Each curve in $\Pi$ is cut into two congruent arcs.
 \item The total length of all the seam edges is as small as possible. 
\end{itemize}
(See Figure \ref{fig:HexagonDecomposition} for an example.)
\begin{figure}[h!]
 \centering 
 \includegraphics{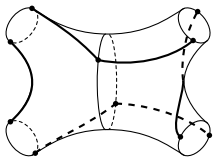}
 \caption{Hexagon decomposition of a four-holed sphere.}
 \label{fig:HexagonDecomposition}
\end{figure}

 Ideally, we would cut each pair of pants into right-angled hexagons whose corners match up. But this is impossible for most hyperbolic metrics. So we force the corners of our hexagons to match up, and then ask that they be as close to right-angled hexagons as possible.

\begin{figure}[h!]
 \centering 
 \includegraphics{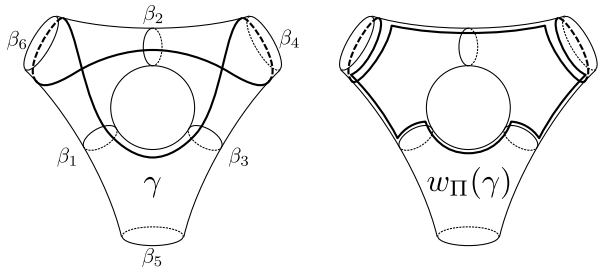}
 \caption{The word $w_\Pi(\gamma)$ corresponding to the curve $\gamma$ and pants decomposition $\Pi = \{\beta_1, \dots, \beta_6\}$.}
 \label{fig:SurfaceProjection}
\end{figure}

Let $\E = \E(\Pi)$ be the set of oriented hexagon edges for this hexagonal decomposition of $\S$. (So we will have two copies of each edge, one for each orientation.) Edges that lie on the curves $\beta_1, \dots, \beta_m$ are again called \textbf{boundary edges}, and edges that join curves in $\Pi$ together are called \textbf{seam edges}. 

We will define a cyclic word $w_\Pi(\gamma)$, whose letters will correspond to edges in $\E$. It turns out that $w_\Pi(\gamma)$ will belong to a set of allowable words, defined as follows.

\begin{defi}
\label{defi:AllowableWords_Surface}
 Fix a pants decomposition $\Pi$ and let $\E$ be the edge set for its hexagon decomposition. Let $\W_\Pi$ be the set of all cyclic words $w$ in edges in $\E$ so that
 \begin{itemize}
  \item We can write 
  \[
  w = b_1 s_1 \dots b_n s_n
  \]
  where $b_i$ is a sequence of boundary edges, $s_i$ is a single seam edge, unless $n = 1$, in which case $w = b_1$ and $s_1 = \emptyset$.
  \item The edges in $w$ can be concatenated into a closed path $p$ that does not back-track.
  \item If the subword $s_{i-1} b_i s_i$ lies in a single pair of pants for some $i$, then $|b_i| \geq 2$.
  \item No more than three consecutive edges lie on the boundary of the same hexagon.
 \end{itemize}

\end{defi}

 When we say that a subword $s_{i-1}b_i s_i$ lies in a single pair of pants, we mean that the arc corresponding to it does not cross any curve in $\Pi$. In this case, we call $b_i$ an \textbf{interior} boundary subword. 
  
  Note that if the subword $s_{i-1} b_i s_i$ does not lie in a single pair of pants, we allow $b_i$ to have any length. In particular, it can be empty. When $s_{i-1} b_i s_i$ does not lie in a single pair of pants, we call $b_i$ a \textbf{bridging} boundary subword. (See Figure \ref{fig:BoundarySubarcs} for examples of interior and bridging boundary subwords.)

\begin{lem}
\label{lem:SurfaceProjection}
 For each $\gamma \in \G^c$ there is a cyclic word $w = w_\Pi(\gamma) \in \W$ whose edges can be concatenated into a path $p$ freely homotopic to $\gamma$ (Figure \ref{fig:SurfaceProjection}.)
\end{lem}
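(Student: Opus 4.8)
The plan is to construct $w_\Pi(\gamma)$ by following the geodesic $\gamma$ around the surface and recording how it interacts with the hexagon decomposition, then to verify the four bullet points of Definition \ref{defi:AllowableWords_Surface} one at a time. First I would put $\gamma$ into a convenient position relative to the hexagon decomposition: after a small homotopy I may assume $\gamma$ is transverse to all the curves $\beta_1,\dots,\beta_m$ of $\Pi$ and to all seam edges, meeting them in finitely many points and avoiding the hexagon corners. Cutting $\S$ along $\Pi$ decomposes $\gamma$ into a collection of arcs, each living in a single pair of pants with endpoints on $\partial$-curves; each such arc can be further subdivided by its intersections with the seam edges of that pair of pants. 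This gives a cyclic sequence of ``pieces,'' each piece being a sub-arc of $\gamma$ contained in a single hexagon with endpoints on the edges of that hexagon.

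Next I would replace each piece by a combinatorial datum: record the (oriented) hexagon edges that the endpoints of the piece lie on, and then homotope the piece, rel the edges it meets, to a union of hexagon edges. Concretely, the strategy is the standard one of pushing $\gamma$ onto the $1$--skeleton of the hexagon decomposition: since each hexagon is a disk, any arc in it with endpoints on two of its sides is homotopic (rel endpoints sliding along those sides) either to a single vertex path along the boundary, or to one of the two boundary arcs between the two sides. Making a consistent choice — always pushing toward the ``short'' side, and pushing an arc that enters and leaves through the same side off that side entirely — yields for each piece a word in the edges of $\E$. Concatenating these over the whole cyclic sequence of pieces produces the cyclic word $w$, and the concatenated edge-path $p$ is, by construction, obtained from $\gamma$ by a finite sequence of homotopies, hence freely homotopic to $\gamma$.

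Then I would check the defining properties of $\W_\Pi$. The decomposition $w = b_1 s_1 \cdots b_n s_n$ into boundary blocks separated by single seam edges is immediate from the construction: a seam edge is traversed exactly when $\gamma$ crosses a seam, and consecutive seam crossings in the same pair of pants are separated by at least the boundary edges that $\gamma$ winds through. The no--back--tracking property of $p$ is arranged by our consistent pushing convention, possibly after cancelling spurs: whenever the pushed path would traverse an edge and immediately its inverse, the original sub-arc of $\gamma$ was inessential there and can be homotoped away, and since $\gamma$ is a geodesic (hence has no contractible or otherwise wasteful excursions) this cleanup terminates and does not change the free homotopy class. The condition $|b_i|\ge 2$ for an interior boundary subword reflects the geometry of a right-angled-ish hexagon: a sub-arc of a geodesic entering a pair of pants through one seam and leaving through the neighbouring seam without crossing a $\Pi$-curve must go around a cuff, so it runs along at least two boundary edges; I would make this precise by a picture / case analysis on the hexagon decomposition of a pair of pants. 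Finally, ``no more than three consecutive boundary edges on the same hexagon'' holds because four consecutive edges of a hexagon would nearly close it up, forcing either a back-track or a piece of $\gamma$ that bounds a disk or is homotopic to a shorter path — again contradicting that $\gamma$ is the geodesic representative.

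The main obstacle I expect is the third bullet, $|b_i|\ge 2$ for interior boundary subwords, together with ensuring that the pushing procedure is \emph{globally consistent} — i.e.\ that the local choices made piece-by-piece glue into a genuine closed edge-path with no back-tracking at the seam crossings themselves. Handling this cleanly requires fixing once and for all, for each hexagon and each ordered pair of its sides, which boundary arc the connecting sub-arc is pushed to, in a way compatible across shared edges; the right bookkeeping here is what makes the rest of the verification routine. Everything else is a finite check on the combinatorics of a single right-angled(ish) hexagon and of the two-hexagon decomposition of a pair of pants, exactly as in the pair-of-pants case treated in \cite{SapirL}, so I would lean on that reference for the local statements and only spell out the new feature, namely the seam edges that bridge distinct pairs of pants.
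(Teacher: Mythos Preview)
Your overall strategy---push each hexagon segment of $\gamma$ onto the short boundary arc, concatenate, then clean up---is exactly what the paper does (its Steps~1--2), and your handling of the first, second, and fourth bullets is fine. The genuine gap is in the third bullet. You assert that a geodesic arc entering a pair of pants through one seam and leaving through the neighbouring seam without crossing a $\Pi$-curve ``must go around a cuff, so it runs along at least two boundary edges.'' This is false: inside a single hexagon, a geodesic segment can join two adjacent seam edges directly, and pushing it to the short side produces exactly $s_{i-1}b_is_i$ with $|b_i|=1$, an interior boundary subword of length one. Nothing about $\gamma$ being geodesic prevents this.

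The paper does not get $|b_i|\ge 2$ for free; it enforces it by an explicit additional homotopy (its Step~3, ``Move~3''): whenever $s_{i-1}b_is_i$ is interior with $|b_i|=1$, these three edges lie on one side of a hexagon and are homotoped to the three edges $x_{i-1}\,s\,x_{i+1}$ on the other side. The boundary letters $x_{i-1},x_{i+1}$ are then absorbed into the neighbouring subwords $b_{i-1},b_{i+1}$, strictly lengthening them, so the number of offending interior subwords drops and the process terminates. You should replace your geometric claim with this move; once you do, the rest of your argument goes through and matches the paper.
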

\begin{proof}
 The construction of words for an arbitrary surface will be almost identical to the construction for pairs of pants found in \cite[Lemma 2.2]{SapirL}, but we will summarize it here for completeness.
 \begin{figure}[h!]
  \centering
  \includegraphics{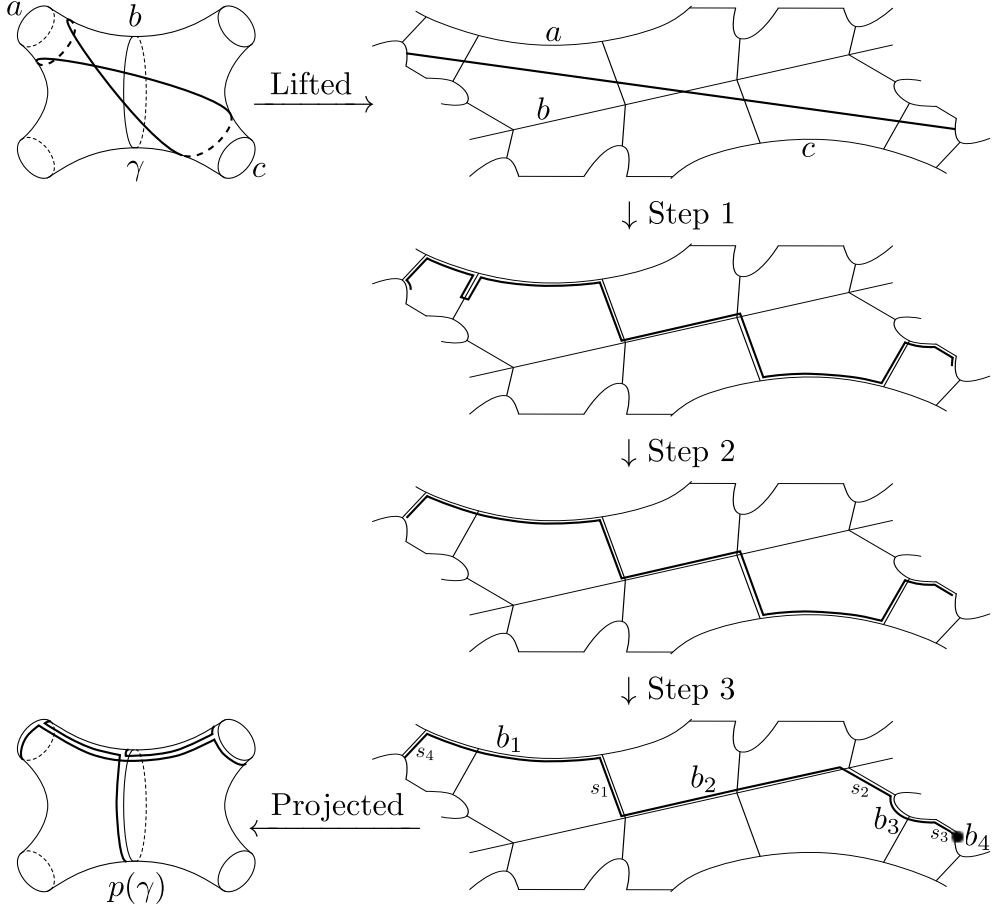}
  \caption{A closed geodesic $\gamma$ gets assigned a curve $p(\gamma) = b_1 s_1 \dots b_4 s_4$. Note that $b_4$ is empty, and is represented by a dot.}
  \label{fig:CurveProjection}
 \end{figure}

 We first construct a closed curve $p(\gamma)$ that will be the concatenation of edges in $\E$ and freely homotopic to $\gamma$. We get the cyclic word $w_\Pi(\gamma)$ by reading off the edges in $p(\gamma)$. We then show that $w_\Pi(\gamma)$ lies in the set $\W$ of allowable words.
  
  The hexagon decomposition of $\S$ cuts $\gamma$ into \textbf{segments}, which are maximal subarcs of $\gamma$ that lie entirely in a single hexagon.
  
  \textbf{Step 1.} Suppose we have a segment $\sigma$ of $\gamma$ that lies in some hexagon $h$. We replace it by an arc $p(\sigma)$ with the same endpoints, that lies in the boundary of $h$. There are two choices of $p(\sigma)$. We choose the one that passes through the fewest number of full hexagon edges.
  
  The arc $p(\sigma)$ is obtained from $\sigma$ via a homotopy relative its endpoints. When we homotope all segments of $\gamma$ in this way, we get a closed curve $p'(\gamma)$ that is freely homotopic to $\gamma$.

  \textbf{Step 2.} Since no two consecutive segments lie in the same hexagon, $p'(\gamma)$ can only back-track along seam edges. We homotope away all back-tracking and get a new closed curve $p''(\gamma)$. The curve $p''(\gamma)$ is now a concatenation of edges in $\E$. For more details, see \cite[Move 2, Lemma 2.2]{SapirL}.

  \textbf{Step 3.}  We read off the edges in $p''(\gamma)$ to get a cyclic word $w = b_1s_1 \dots b_ns_n$ where $b_i$ is a (possibly empty) sequence of boundary edges called a boundary subword and $s_i$ is a single seam edge. However, we may need to homotope $p''(\gamma)$ further so that $|b_i| \leq 1$ implies $b_i$ is a bridging boundary subword.
  
  Suppose $b_i$ is empty for some $i$. Then $s_{i-1} s_i$ is a subarc of $p''(\gamma)$. Because $p''(\gamma)$ does not back-track, $s_{i-1}$ and $s_i$ must come in to some curve $\beta_j$ in $\Pi$ from opposite sides. Thus, the curve $s_{i-1}s_i$ crosses a curve in $\Pi$. Therefore, $b_i$ is a bridging subword.
  
  Now suppose $|b_i| = 1$ for some $i$, but $b_i$ is an interior boundary subword. Then $s_{i-1} b_i s_i$ lies in a single hexagon. We can homotope $s_{i-1} b_i s_i$ to a path $x_{i-1} s x_{i+1}$ on the other side of the hexagon, where $x_{i-1}$ and $x_{i+1}$ are boundary edges, and $s$ is a seam edge (Figure \ref{fig:Move3}). Then the boundary subword $b_{i-1}$ of $p''(\gamma)$ becomes $b_{i-1} x_{i-1}$. In other words, it becomes longer. Thus, if $b_{i-1}$ is an interior boundary subword, then $b_{i-1}x_{i-1}$ is also an interior boundary subword of word length at least two. The same is true for $b_{i+1}$, which becomes $x_{i+1} b_{i+1}$. So after this homotopy, we have at least one fewer interior boundary subword with length at most 1. 
  \begin{figure}[h!]
   \centering 
   \includegraphics{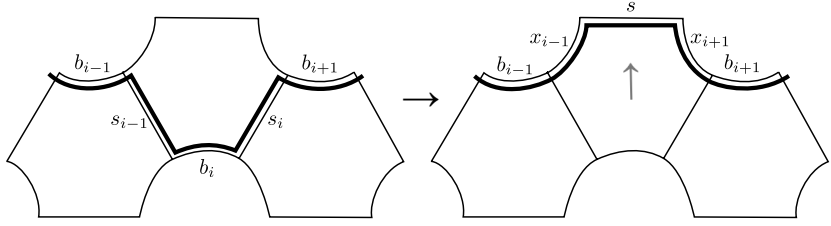}
   \caption{Step 3: we homotope away interior boundary subwords of length 1.}
   \label{fig:Move3}
  \end{figure}

  We get rid of all interior boundary subwords of length at most 1 in this way to get a curve $p(\gamma)$.
Thus, we get a cyclic word $w_\Pi(\gamma) \in \W$.

\end{proof}

\subsection{Multiple choices for $w_\Pi(\gamma)$}
  Note that the word $w_\Pi(\gamma)$ obtained in Lemma \ref{lem:SurfaceProjection} depends on the order in which we do the homotopies in Step 3 of the proof. Thus there may be more than one choice of cyclic word corresponding to each $\gamma \in \G^c$. So among all cyclic words $w$ corresponding to $\gamma$, we choose one with the fewest number of boundary subwords. There there is more than one such word, then we choose one at random.

 \subsection{Pants decompositions not on the representative list}
   \label{sec:PantsNotOnTheList}
   Recall from Section \ref{sec:MainIdeaOfProof} that the proof of Theorem \ref{thm:Main} relies on two key lemmas. Proposition \ref{prop:BdrySubwordIntBound} states that any $\gamma \in \G^c(L,K)$ has a curve $\gamma' \in \Mod_\S \cdot \gamma$ in its orbit, so that for some pants decomposition $\Pi_i$ from a representative list of pants decompositions, 
   \[
    |w_{\Pi_i}(\gamma')| \lesssim K
   \]
  and if $w_{\Pi_i}(\gamma')$ has $n$ seam edges, then $n \lesssim \sqrt K$. Lemma \ref{lem:BdrySubwordLKBounds} states that the pair $(\gamma', \Pi_i)$ satisfy a length bound, as well:
  \[
    |w_{\Pi_i}(\gamma')| \lesssim L + \sqrt K
  \]
  
  Suppose that the above two conditions from  Proposition \ref{prop:BdrySubwordIntBound} and Lemma \ref{lem:BdrySubwordLKBounds} hold for $\gamma' \in \Mod_\S \cdot \gamma$ and some pants decomposition $\Pi$, not necessarily on the representative list. Then $\Pi$ is in the same $\Mod_\S$ orbit as some $\Pi_i$ from the representative list of pants decompositions. Thus there is some homeomorphism $f$ of $\S$ sending the hexagon decomposition of $\Pi$ to the hexagon decomposition of $\Pi_i$. In particular, $f$ sends boundary edges to boundary edges and seam edges to seam edges. Thus, $f$ induces a map of words, with
   \[
    w_\Pi(\gamma) \mapsto f^*w_\Pi(\gamma)
   \]
   Then note that
   \[
    f^*w_\Pi(\gamma) = w_{f \cdot \Pi}(f \cdot \gamma)
   \]
   where $f \cdot \Pi = \Pi_i$. Therefore, if we can show the intersection number and length conditions for \textit{some} pants decomposition $\Pi$ and \textit{some} $\gamma' \in \Mod_\S \cdot \gamma$, then we have shown them for a pants decomposition from the representative list and (another) curve in the orbit of $\gamma$.


\section{Intersection number condition}
\label{sec:BoundingIntNumber}
In Section \ref{Idea:SetsOfWords}, we defined a representative list $\{\Pi_1, \dots, \Pi_l\}$ of pants decompositions, containing the shortest representative of each $\Mod_\S$ orbit. Then $\W_i(L,K)$ was the set of words $w_{\Pi_i}(\gamma) = b_1s_1 \dots b_n s_n$ so that, in particular, $|w_{\Pi_i}(\gamma)| \lesssim K$ and $n \lesssim \sqrt K$. We now show that each orbit $\Mod_\S \cdot \gamma$ contains a curve $\gamma'$ so that these two conditions are satisfied for some $\Pi_i$ in the representative list.

\begin{prop}
\label{prop:BdrySubwordIntBound}
 Let $L,K > 0$. For each $\gamma \in \G^c(L,K)$ there is a pants decomposition $\Pi_j \in \{\Pi_1, \dots, \Pi_l\}$ and a geodesic $\gamma' \in \Mod_\S \cdot \gamma$ so that if $w_{\Pi_j}(\gamma') = b_1s_1 \dots b_n s_n$ then
 \[
  \sum_{i=1}^n |b_i| \lesssim K
 \]
 and furthermore,
 \[
  n \lesssim \sqrt K
 \]
where the constants depend only on the topology of $\S$.
\end{prop}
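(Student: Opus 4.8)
The plan is to build the pants decomposition $\Pi_j$ out of the one supplied by Proposition \ref{prop:NicePants}, and then to read off the two required bounds on $w_{\Pi_j}(\gamma')$ from the geometry of $\gamma$. By the discussion in Section \ref{sec:PantsNotOnTheList}, it is enough to produce \emph{some} pants decomposition $\Pi$ (not necessarily on the representative list) and \emph{some} curve in $\Mod_\S\cdot\gamma$ for which the two estimates hold: applying a mapping class taking $\Pi$ to the representative $\Pi_j$ in its $\Mod_\S$-orbit then transports both the word and the estimates, with $\gamma'$ the image of $\gamma$. So I fix $\gamma\in\G^c(L,K)$, apply Proposition \ref{prop:NicePants} to get a pants decomposition $\Pi$ with $i(\gamma,\Pi)\lesssim\sqrt K$, and analyze the word $w:=w_\Pi(\gamma)=b_1s_1\cdots b_ns_n$ produced by Lemma \ref{lem:SurfaceProjection}.

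\emph{Bounding $n$.} First I would check that the homotopies in the construction of $w_\Pi(\gamma)$ (Steps 1--3 of Lemma \ref{lem:SurfaceProjection}) do not increase geometric intersection number with $\Pi$, so the concatenated path $p(\gamma)$ meets the curves of $\Pi$ at most $i(\gamma,\Pi)\lesssim\sqrt K$ times; each such crossing occurs inside a bridging boundary subword, so there are $\lesssim\sqrt K$ bridging subwords. It remains to bound the number of \emph{interior} boundary subwords (recall that $|b_i|\le 1$ already forces $b_i$ bridging by Definition \ref{defi:AllowableWords_Surface}). Between two consecutive crossings of $\Pi$, the path $p(\gamma)$ is a taut arc contained in a single pair of pants, and the key claim is that such an arc passes through only a bounded number of seam edges, with bound depending only on $\S$: once the windings around the three boundary curves (which are absorbed into the boundary subwords) are collapsed, a taut arc in a pair of pants has only finitely many combinatorial types, and Step 3 of the construction has removed interior subwords of length $\le 1$. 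Hence $n\lesssim\sqrt K$.

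\emph{Bounding $\sum_i|b_i|$.} It suffices to control the total length of the boundary subwords, which is, up to a factor of $2$, the total winding of $p(\gamma)$ about the curves of $\Pi$. Fix $\beta\in\Pi$ and look at the sub-arcs of $p(\gamma)$ lying in a collar of $\beta$: there are $\lesssim i(\gamma,\beta)+n$ of them; two such sub-arcs whose windings about $\beta$ differ, or which run in opposite directions along $\beta$, must cross; and a single sub-arc that makes a U-turn while winding $k$ times about $\beta$ forces $\gtrsim k$ self-intersections, since an embedded arc in an annulus with both endpoints on the same boundary component cannot wind. Summing these local estimates over $\beta\in\Pi$ bounds the total winding by $i(\gamma,\gamma)+(\text{terms of lower order in }K)$, using $i(\gamma,\Pi)\lesssim\sqrt K$ and $n\lesssim\sqrt K$; alternatively, one may restrict $\gamma$ to each pair of pants cut off by $\Pi$ and invoke, pants by pants, the corresponding word-length-versus-self-intersection estimate from \cite{SapirL}. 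Either way, $\sum_i|b_i|\lesssim K$, which combined with the previous paragraph completes the proof after transporting to the representative list as above.

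I expect the last step --- converting ``long boundary subword'' into ``many self-intersections'' and then summing the local contributions correctly over all pants curves, keeping track of sub-arcs nearly parallel to a pants curve and of cross-terms between distinct subwords --- to be the main obstacle. The bounded-turning claim inside a pair of pants, used to pass from ``$\lesssim\sqrt K$ crossings of $\Pi$'' to ``$n\lesssim\sqrt K$'', is a secondary delicate point that must be checked with care, since $\gamma$ itself need not be simple.
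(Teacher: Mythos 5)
Your broad framing is right — use Proposition \ref{prop:NicePants} to get a nice $\Pi$, use Section \ref{sec:PantsNotOnTheList} to transport to a representative $\Pi_j$, and then estimate — but there are two genuine gaps, both of which the paper's argument works hard to close.

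\textbf{Missing the Dehn twist normalization.} Your proposal analyzes $w_\Pi(\gamma)$ for the original $\gamma$, and to get $\gamma'$ you only apply a homeomorphism carrying $\Pi$ to the representative $\Pi_j$. But the bound $\sum_i |b_i| \lesssim K$ simply does not hold for $w_\Pi(\gamma)$ in general. Your argument correctly observes that two bridging sub-arcs whose windings about $\beta$ \emph{differ} must cross, but that controls only the differences $|\Delta_i - \Delta_j|$ of the twisting parameters, not the $|\Delta_i|$ themselves: if every strand crossing $\beta$ twists $N$ times in the same direction, no extra self-intersections are created and yet the bridging boundary subwords all have length about $2N$, so their total length grows like $N \cdot i(\gamma,\beta)$ with $K$ fixed. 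The paper removes this common twist by choosing $\gamma' = f\cdot\gamma$ for a suitable composition $f$ of Dehn twists about curves of $\Pi$ (Claim \ref{cla:ApplyDehnTwists}), which is exactly what makes the Mod$_\S$-orbit freedom in the statement necessary rather than cosmetic, and then converts the difference bound (Claim \ref{cla:BridgingDifference}) into an absolute bound (Claim \ref{cla:BridgingTotalLength}) via the convex-polygon/Heron's-formula argument. Your sketch omits this entirely, so the bound on $\sum_{\gamma_i\in B}|b_i|$ is unsupported.

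\textbf{Bounding the number of interior subwords.} Your claim that a taut arc in a single pair of pants, between two consecutive crossings of $\Pi$, passes through only a bounded number of seam edges ``once windings are collapsed'' is not true and is not a side issue. A geodesic arc in a pair of pants can wind in succession near different lifts of the cuffs and can therefore accumulate a number of interior boundary subwords growing with its self-intersection number; what is true, and what the paper proves (Lemma \ref{lem:InteriorBdrySubwordIntBound} via the machinery of twisting and ``relevant'' subsets, Lemmas \ref{lem:UnionRelevant}, \ref{lem:MaxRelevantBdrySubwordIntBound}, \ref{lem:RelevantAndBridgingPercent}), is that the number of interior boundary subwords is $\lesssim\sqrt K$ — a bound quadratic in the self-intersections, not an absolute constant per arc. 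Moreover, your argument does not apply at all when $\gamma$ is disjoint from $\Pi$ (or crosses it very few times), e.g.\ when $\gamma$ lies in a single pair of pants; the paper's relevant-set argument handles this case uniformly. So the reduction from ``$\lesssim\sqrt K$ crossings'' to ``$n\lesssim\sqrt K$'' does not go through as stated.
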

Thus, both the total length of all boundary subwords, and the number of boundary subwords, are bounded in terms of intersection number.

\subsection{Idea of proof of Proposition \ref{prop:BdrySubwordIntBound}}
For each $\gamma \in \G^c$ and pair of pants $\Pi$, we want to relate $i(\gamma, \gamma)$ to the lengths of boundary subwords in $w_\Pi(\gamma)$. We do this as follows.
\begin{itemize}
 \item We wish to assign self-intersection points of $\gamma$ to pairs of boundary subwords in $w_\Pi(\gamma)$. In particular, each boundary subword $b_i$ of $w_\Pi(\gamma)$ corresponds to a so-called boundary subarc $\gamma_i \subset \gamma$ (Figure \ref{fig:BoundarySubarcs} and Definition \ref{def:TwistTie}). Pairs of boundary subwords are assigned the intersection points between their corresponding boundary subarcs. 
 \item We first relate $i(\gamma, \gamma)$ to the intersection numbers of all of the boundary subarcs. This is non-trivial because boundary subarcs do not partition the curve $\gamma$. We show
 \[
  \sum_{i, j = 1}^n i(\gamma_i, \gamma_j) \lesssim i(\gamma, \gamma) 
 \]
 (Lemma \ref{lem:OverlapBound}.)
 \item We then relate the intersections between a pair $(\gamma_i,\gamma_j)$ of boundary subarcs with the word lengths $|b_i|$ and $|b_j|$ of the corresponding boundary subwords. The relationships depend on whether $b_i$ and $b_j$ are interior or bridging boundary subwords.
 
 So we prove Proposition \ref{prop:BdrySubwordIntBound} for the two types of subwords separately: Lemma \ref{lem:BridgingBdrySubwordIntBound} proves it for bridging boundary subwords and Lemma \ref{lem:InteriorBdrySubwordIntBound}, for interior boundary subwords. In other words, we show that the number of bridging (resp. interior) boundary subwords is at most $c\sqrt K$, and that the total length of all bridging (resp. interior) boundary subwords is at most $c K$ for some constant $c$ depending only on $\S$. 

  \item Section \ref{sec:ProofBdrySubwordIntBound} has the proof of Proposition \ref{prop:BdrySubwordIntBound} given Lemmas \ref{lem:BridgingBdrySubwordIntBound} and \ref{lem:InteriorBdrySubwordIntBound}.
\end{itemize}


\subsection{Boundary subarcs}
\label{sec:TwistTies}

 \begin{figure}[h!]
 \centering
  \includegraphics{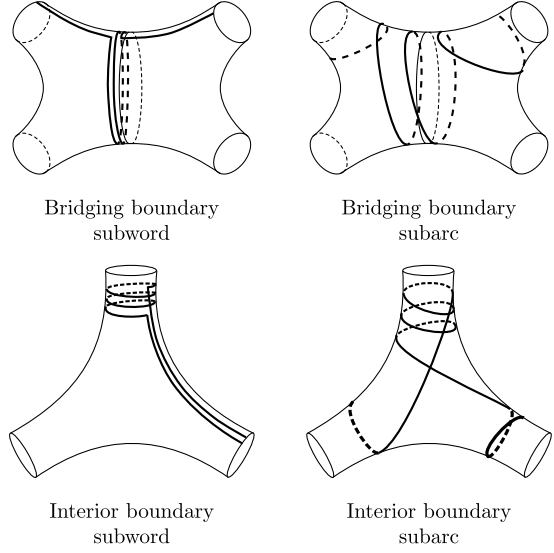}
 \caption[A boundary subarc]{The boundary subarc $\gamma_i$ corresponding to boundary subword $b_i$.}
 \label{fig:BoundarySubarcs}
 \end{figure} 

 Let us formally define boundary subarcs. Take a pants decomposition $\Pi$ and a non-simple closed curve $\gamma \in \G^c$. Note that we do not need to worry about simple closed curves, since we know $\#\Oo(L,0) = 1 + \lfloor \frac g 2 \rfloor$.

Let $b_i$ be a boundary subword of $w_\Pi(\gamma)$. To define the boundary subarc $\gamma_i$ associated to $b_i$, we work in the universal cover $\tilde \S$ of $\S$.  For an illustration of what follows, see Figure \ref{fig:BoundaryArc}. 

Note that the hexagon decomposition of $\S$ associated to $\Pi$ lifts to a hexagonal tiling of $\tilde \S$.  

\begin{defi}
\label{def:HexagonNeighborhood}
 Let $\tilde \beta$ be the lift of a curve in $\Pi$ to $\tilde \S$ . The \textbf{one-hexagon neighborhood} of $\tilde \beta$, denoted $N_1(\tilde \beta)$, is the set of all hexagons adjacent to $\tilde \beta$. Inductively, its \textbf{$k$-hexagon neighborhood} $N_k(\tilde \beta)$ is the set of all hexagons that share an edge with its $k-1$-hexagon neighborhood $N_{k-1}(\tilde \beta)$ (Figure \ref{fig:TwoHexNbhd}.)
\end{defi}

 \begin{figure}[h!] \centering
 \includegraphics{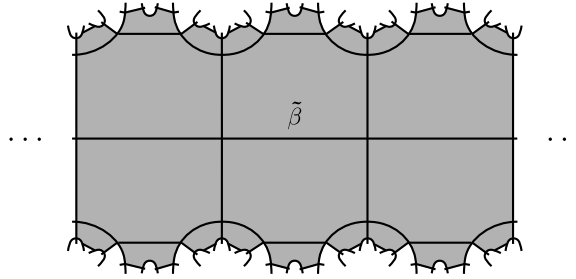}
  \caption[A two-hexagon neighborhood]{$N_2(\tilde \beta)$}
  \label{fig:TwoHexNbhd}
 \end{figure}

Lift $\gamma$ to a geodesic $\tilde \gamma$ in $\tilde \S$ . Let $p(\gamma)$ be the closed curve formed by concatenating the edges in $w_\Pi(\gamma)$. Lifting the homotopy between $\gamma$ and $p(\gamma)$ determines a lift $\tilde p(\gamma)$. Then each boundary subword $b_i$ lifts to a family of subarcs of $\tilde p(\gamma)$. 

Since $\gamma$ is non-simple, we know that $w_\Pi(\gamma)$ must have at least two boundary subwords. (If $w_\Pi(\gamma)$ has just one boundary subword, then $w_\Pi(\gamma) = b_1$. If $b_1$ lies on boundary component $\beta$ of $\Pi$, then $\gamma$ must be a power of $\beta$, which is simple.) Let $\pi : \tilde \S \rightarrow \S$  be the projection back down to $\S$.

We are now ready to define boundary subarcs associated to each boundary subword.

\begin{defi}
\label{def:TwistTie}
 Let $w_\Pi(\gamma) = b_1 s_1 \dots b_n s_n$ with $n \geq 2$. For each $i$, we define the \textbf{boundary subarc} $\gamma_i = \alpha(b_i)$ for the pair $(\gamma,\Pi)$ as follows. 
 
 Let $\tilde b_i \subset \tilde p(\gamma)$ be a lift of the boundary subword $b_i$. Then $\tilde b_i$ lies on a complete geodesic $\tilde \beta_i$. Let
 \[
  \tilde \gamma_i = 
  \left \{ 
  \begin{array}{ccc}
   \tilde \gamma \cap N_1(\tilde \beta_i) & \mbox{ if } & 2 \leq n \leq 5 \\
   \tilde \gamma \cap N_2(\tilde \beta_i) & \mbox{ if } & n \geq 6
  \end{array}
\right .
 \]
 (If $ \tilde \gamma_i$ is disconnected, take the connected component that passes through $N_1(\tilde \beta)$.) Then the boundary subarc $\gamma_i$ is defined by
\[
 \gamma_i = \pi (\tilde \gamma_i)
\]
\end{defi}
For an illustration of boundary subarcs lifted to the universal cover, see Figure \ref{fig:BoundaryArc}. For boundary subarcs on the surface, see Figure \ref{fig:BoundarySubarcs}.
 \begin{rem}
  Lemma \ref{lem:RelevantAndBridgingPercent} is the only place where it is important to use two-hexagon neighborhoods in this definition. Everywhere else, a one-hexagon neighborhood would suffice.
 \end{rem}

 \begin{figure}[h!] \centering
  \includegraphics{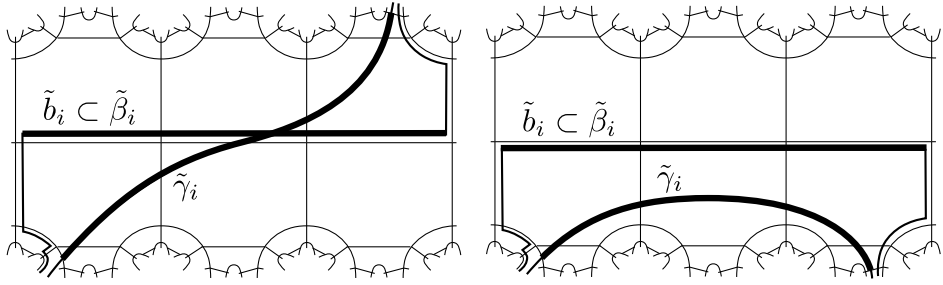}
 \caption[The lift of a boundary subarc]{Lifts of bridging and interior boundary subarcs (in bold) and the lifts of the associated boundary subwords.}
 \label{fig:BoundaryArc}
 \end{figure} 

Because the hexagonal tiling of $\tilde \S$  is invariant under deck transformations, $\gamma_i$ is independent of the lift we choose of $b_i$.

We defined boundary subarcs of $\gamma$ as the projection down to $\S$ of certain subarcs of a lift $\tilde \gamma$ of $\gamma$. So a priori, a boundary subarc $\gamma_i$ need not be a proper subarc of $\gamma$. We now show that boundary subarcs are, in fact, proper subarcs of $\gamma$.
 
 \begin{lem}
 \label{lem:PiHomeo}
  Suppose $w_\Pi(\gamma) = b_1s_1 \dots b_ns_n$ with $n \geq 2$ and take a boundary subarc $\gamma_i = \alpha(b_i)$. Then $l(\gamma_i) \leq l(\gamma)$ for each $i$.
 \end{lem}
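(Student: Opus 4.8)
The map $\pi\colon\tilde\S\to\S$ is a covering, hence a local isometry, so it preserves the length of any path; thus it suffices to bound the length of the connected geodesic sub-arc $\tilde\gamma_i\subseteq\tilde\gamma$ of Definition~\ref{def:TwistTie}. Write $\tilde\gamma_i=\tilde\gamma|_{[a,b]}$ and let $g_\gamma\in\pi_1(\S)$ be the hyperbolic element with axis $\tilde\gamma$ and translation length $l(\gamma)$. Because $\gamma$ is non-simple it is not a power of the pants curve $\beta_j$ carrying $\tilde b_i$ (recall the discussion before Definition~\ref{def:TwistTie}), so $g_\gamma$ does not stabilize $\tilde\beta_i$ and the geodesics $\tilde\gamma,\tilde\beta_i$ have four distinct ideal endpoints; in particular $\tilde\gamma$ meets the bounded set $N_k(\tilde\beta_i)$ in a compact set, so $\tilde\gamma_i$ is a genuine finite sub-arc. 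The assertion $l(\gamma_i)\le l(\gamma)$ is then exactly that $b-a\le l(\gamma)$ --- i.e.\ that $\tilde\gamma_i$ does not properly contain a fundamental domain for the $\langle g_\gamma\rangle$--translation on its axis.

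I would argue by contradiction: suppose $b-a>l(\gamma)$. Then $\tilde\gamma|_{[a,\,a+l(\gamma)]}\subseteq\tilde\gamma_i\subseteq N_k(\tilde\beta_i)$, and applying the powers of $g_\gamma$ gives $\tilde\gamma|_{[a+ml(\gamma),\,a+(m+1)l(\gamma)]}\subseteq g_\gamma^m N_k(\tilde\beta_i)=N_k(g_\gamma^m\tilde\beta_i)$ for all $m\in\Z$; hence the whole axis $\tilde\gamma$ is confined to $\bigcup_m N_k(g_\gamma^m\tilde\beta_i)$, and since consecutive pieces overlap, $\tilde\gamma(a+ml(\gamma))\in N_k(g_\gamma^{m-1}\tilde\beta_i)\cap N_k(g_\gamma^m\tilde\beta_i)$. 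As $N_k(\tilde\beta')$ lies in a metric $D$--neighborhood of $\tilde\beta'$ for some $D=D(k,X)$, the distinct lifts $g_\gamma^{m-1}\tilde\beta_i$ and $g_\gamma^m\tilde\beta_i$ of the simple closed geodesic $\beta_j$ are within $2D$ of each other; by the collar lemma their distance is at least twice the collar half-width of $\beta_j$, forcing $l(\beta_j)\ge c_0$ for a positive constant $c_0=c_0(k,X)$. If $l(\beta_j)<c_0$ this is already a contradiction, so assume $l(\beta_j)\ge c_0$; projecting $\tilde\gamma|_{[a,\,a+l(\gamma)]}$ down also shows $\gamma$ lies entirely in the $k$--hexagon neighborhood of $\beta_j$.

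To finish the case $l(\beta_j)\ge c_0$ I would compare $\tilde\gamma$ with the quasigeodesic $\tilde p(\gamma)$, which lies within a bounded Hausdorff distance $R=R(X)$ of $\tilde\gamma$ and whose sub-arc $\tilde b_i$ is the geodesic segment of $\tilde\beta_i$ of length $\tfrac12|b_i|\,l(\beta_j)$ carrying $b_i$ (each boundary edge on $\beta_j$ has length $l(\beta_j)/2$, since $\beta_j$ is cut into two congruent boundary edges). Once $l(\beta_j)\ge c_0$ no boundary subword of $\tilde p(\gamma)$ other than $\tilde b_i$ passes through $N_k(\tilde\beta_i)$ --- again because distinct lifts of any pants curve are uniformly separated --- so $l(\tilde\gamma_i)\le\tfrac12|b_i|\,l(\beta_j)+C$ with $C=C(k,X)$. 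On the other hand $w_\Pi(\gamma)$ makes $\gamma$ twist about $\tfrac12|b_i|$ times around $\beta_j$, so the standard Fenchel--Nielsen length estimate gives $l(\gamma)\gtrsim\tfrac12|b_i|\,l(\beta_j)$; sharpening this --- using that $\gamma$ is non-simple, so $w_\Pi(\gamma)$ contains at least one further boundary subword contributing to $l(\gamma)$ but not to $\tilde\gamma_i$ --- should upgrade it to $l(\tilde\gamma_i)\le l(\gamma)$, contradicting $b-a>l(\gamma)$.

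The main obstacle is this last step: passing from the coarse Fenchel--Nielsen inequality to the sharp bound $l(\gamma_i)\le l(\gamma)$, and controlling how $\tilde\gamma$ interacts with hexagon neighborhoods of lifts other than $\tilde\beta_i$ when $\beta_j$ is not short. A cleaner route that avoids the case split would be to show directly that a $g_\gamma$--invariant geodesic $\tilde\gamma$ which stays in a bounded neighborhood of $\tilde\beta_i$ over a full translation length forces $g_\gamma$ to nearly commute with the generator of the stabilizer of $\tilde\beta_i$, hence (by discreteness of $\pi_1(\S)$) to lie in that cyclic stabilizer, so that $\gamma$ is a power of $\beta_j$ --- contradicting non-simplicity outright. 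Either way the crux is a quantitative collar-lemma / Margulis estimate bounding how long a geodesic distinct from $\tilde\beta_i$ can remain inside a hexagon neighborhood of $\tilde\beta_i$.
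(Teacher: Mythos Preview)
Your approach has a fundamental gap: you are trying to prove the \emph{sharp} inequality $l(\gamma_i)\le l(\gamma)$ using only coarse metric tools --- the collar lemma, quasi-geodesic Hausdorff bounds, Fenchel--Nielsen twist estimates --- each of which controls lengths only up to additive and multiplicative constants depending on $X$. You flag this yourself (``passing from the coarse Fenchel--Nielsen inequality to the sharp bound''), but no sharpening is available along these lines: nothing in your setup distinguishes $l(\gamma_i)=l(\gamma)+\epsilon$ from $l(\gamma_i)=l(\gamma)-\epsilon$. Your alternative route via discreteness fails for the same reason: a Margulis-type conclusion that $g_\gamma$ lies in the stabilizer of $\tilde\beta_i$ would require the translation lengths of \emph{both} $g_\gamma$ and the generator of that stabilizer to lie below the Margulis constant, and there is no upper bound on $l(\gamma)$.

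The paper's proof avoids metric estimates entirely and works combinatorially with the hexagon tiling, splitting on the two cases of Definition~\ref{def:TwistTie}. When $k=1$ (so $2\le n\le 5$): take an endpoint $\tilde x\in\partial N_1(\tilde\beta_i)$ of $\tilde\gamma_i$ and set $\tilde y=g_\gamma(\tilde x)$; since every edge of the tiling interior to $N_1(\tilde\beta_i)$ is incident to $\tilde\beta_i$, while $\tilde y$ (projecting to the same point of $\S$ as $\tilde x$) lies on an edge that is not, the fundamental-domain arc $\tilde\gamma_{xy}$ from $\tilde x$ to $\tilde y$ cannot be contained in $N_1(\tilde\beta_i)$ and therefore contains $\tilde\gamma_i$. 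When $k=2$ (so $n\ge 6$): the lifts $\tilde\beta_i$ and $\tilde\beta_{i+3}$ are separated by at least three seam edges, so $N_2(\tilde\beta_i)$ and $N_2(\tilde\beta_{i+3})$ share at most one hexagon; since $\gamma_{i+3}$ spans at least four hexagons, at least two of its segments lie outside $\gamma_i$, and $\gamma_i$ is a proper subarc of $\gamma$. The hexagon combinatorics give exact containment, not estimates up to constants --- that is what makes the sharp bound accessible.
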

 
 \begin{proof}
  Choose a lift $\tilde \gamma$ of $\gamma$. Fix an $i$, and suppose the boundary subword $b_i$ lies on a simple closed geodesic $\beta_i$ in $\S$. Then there is some lift $\tilde \beta_i$ of $\beta_i$ of $\S$ and a lift $\tilde \gamma_i$ of $\gamma_i$ so that
  \[
   \tilde \gamma_i = \tilde \gamma \cap N_k(\tilde \beta_i), \mbox{ where } k = 1 \mbox{ or } 2
  \]
 Note that $\tilde \gamma$ must cross $\partial N_k(\tilde \beta_i)$ in two points, for any $k \in \N$. If not, then $\tilde \gamma$ has an infinite ray that stays a bounded distance away from $\tilde \beta_i$. So either $\gamma$ spirals around $\beta_i$, which means it cannot be closed, or $\gamma = \beta_i$. Both possibilities contradict our assumptions.
 
 \begin{figure}[h!]
  \centering 
  \includegraphics{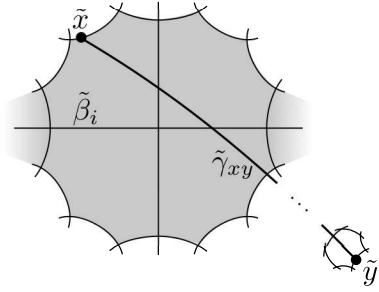}
  \caption{The arc $\tilde \gamma_{xy}$ passes through $N_1(\tilde \beta_i)$, which is shaded gray.}
  \label{fig:K1Subarc}
 \end{figure}

 First, suppose $k = 1$. Then $w_\Pi(\gamma) = b_1 \dots s_n$ for $2 \leq n \leq 5$. See Figure \ref{fig:K1Subarc} for what follows. Let $\tilde x$ be one of the endpoints of $\tilde \gamma_i$ lying on $\partial N_1(\tilde \beta_i)$. Abusing notation, we can view $\gamma$ as a deck transformation that acts by translation along $\tilde \gamma$. Let $\tilde y = \gamma(\tilde x)$. Then let $\tilde \gamma_{xy}$ be the subarc of $\tilde \gamma$ between points $\tilde x$ and $\tilde y$.  We can assume that $\tilde \gamma_i$ and $\tilde \gamma_{xy}$ overlap because if the interior of $\tilde \gamma_{xy}$ does not pass through $N_1(\tilde \gamma)$, then we can let $\tilde y = \gamma^{-1}(\tilde x)$, instead. Since $l(\tilde \gamma_{xy}) = l(\gamma)$, we just need to show that, in fact, $\tilde \gamma_i$ is contained inside $\tilde \gamma_{xy}$.

 Note that $\tilde x$ and $\tilde y$ project to the same point in $\S$. Since $\tilde x$  is an endpoint of $\tilde \gamma_i$, it lies on a hexagon edge on $\partial N_1(\tilde \beta_i)$, which does not touch $\tilde \beta_i$. Thus, $\tilde y$ also lies on an edge that does not touch $\tilde \beta_i$. Therefore $\tilde \gamma_{xy}$ is not contained in $N_1(\tilde \beta_i)$. So $\tilde \gamma_i \subseteq \tilde \gamma_{xy}$.

 Now suppose that $w_\Pi(\gamma) = b_1s_1 \dots b_ns_n$ with $n \geq 6$. Then 
 \[
  \tilde \gamma_j = \tilde \gamma \cap N_2(\tilde \beta_j)
 \]
 for some lift $\tilde \beta_j$ of a curve in $\Pi$, for each $j = 1, \dots, n$. Note that the shortest edge path between $\tilde \beta_i$ and $\tilde \beta_{i+3}$ must have at least 3 seam edges in it. So $\tilde \gamma_i$ and $\tilde \gamma_{i+3}$ overlap inside at most 1 hexagon of the hexagonal tiling of $\tilde \S$. In fact, because $n \geq 6$, no other lift of $\gamma_{i+3}$ overlaps with $\tilde \gamma_i$ in more than 1 hexagon. Thus, $\gamma_i$ and $\gamma_{i+3}$ have at most 2 segments in common (where a segment is a maximal subarc of $\tilde \gamma$ lying in some hexagon of the hexagon decomposition of $\S$. But by definition, $\gamma_j$ has at least 4 segments for each $j$. So at least two segments of $\gamma_{i+3}$ are not segments of $\gamma_i$. Therefore, $l(\gamma_i) \leq l(\gamma)$.
 
\end{proof}

\subsection{Intersection numbers of boundary subarcs approximate $i(\gamma, \gamma)$}

Boundary subarcs do not partition $\gamma$, so it is not true that $\sum i(\gamma_i, \gamma_j) = i(\gamma,\gamma)$. However, this holds up to a multiplicative constant.

\begin{lem}
\label{lem:OverlapBound}
 Let $\gamma \in \G^c$ and let $\Pi$ be a pants decomposition. Let $\gamma_1, \dots, \gamma_n$ be the boundary subarcs of the pair $(\gamma, \Pi)$. Then
 \[
  \sum_{i, j = 1}^n i(\gamma_i, \gamma_j) \leq 25 i(\gamma, \gamma) 
 \]
\end{lem}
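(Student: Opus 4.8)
The plan is to bound the total number of intersection points among all boundary subarcs by counting, for each self-intersection point $x$ of $\gamma$, the number of pairs $(\gamma_i,\gamma_j)$ whose intersection at $x$ gets counted. Concretely, $i(\gamma,\gamma)$ counts each self-intersection point of $\gamma$ once, whereas $\sum_{i,j} i(\gamma_i,\gamma_j)$ counts a point once for each ordered pair of boundary subarcs passing through it (including $i=j$). So the whole lemma reduces to the claim that every point of $\S$ is covered by at most a bounded number — here $5$, giving $5^2 = 25$ ordered pairs — of boundary subarcs. This is a purely local, combinatorial statement about the hexagonal tiling, so I would work in the universal cover $\tilde\S$ and count how many of the lifted subarcs $\tilde\gamma_i$ can simultaneously pass through a given hexagon.

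The key steps, in order: First, fix a hexagon $H$ of the hexagonal tiling of $\tilde\S$ and ask which lifts of boundary subarcs meet $H$. By Definition \ref{def:TwistTie}, a lift $\tilde\gamma_i$ of a boundary subarc lies inside $N_k(\tilde\beta_i)$ for $k \in \{1,2\}$, where $\tilde\beta_i$ is the complete geodesic carrying the corresponding boundary subword. So $\tilde\gamma_i$ meets $H$ only if $H \subseteq N_k(\tilde\beta_i)$, i.e. only if $\tilde\beta_i$ is one of the (finitely many) lifts of pants curves within hexagon-distance $k$ of $H$. Second, bound the number of such curves $\tilde\beta_i$: in the hexagon decomposition each hexagon has three boundary edges lying on (at most three distinct) pants-curve lifts, and passing to $N_2$ only adds curves that are a bounded combinatorial distance away; a direct inspection of the tiling gives that at most five distinct lifts $\tilde\beta_i$ of pants curves are "relevant" to $H$ (this is exactly the $2 \le n \le 5$ / $n \ge 6$ dichotomy in the definition, which is designed so that the relevant neighborhoods don't proliferate). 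Third — and this is the subtle point — note that two \emph{distinct} boundary subwords $b_i$, $b_j$ of the same cyclic word can lie on the \emph{same} line $\tilde\beta_i = \tilde\beta_j$ in $\tilde\S$, or a single boundary subword can have several lifts passing near $H$; I must argue that, because the subarcs $\tilde\gamma_i$ are the intersections of the \emph{single} geodesic $\tilde\gamma$ with these neighborhoods, the connectedness clause in Definition \ref{def:TwistTie} ("take the component through $N_1(\tilde\beta)$") together with the fact that $\tilde\gamma$ is a simple arc in $\tilde\S$ limits how many distinct $\tilde\gamma_i$ can coexist in $H$. Combining, at most $5$ boundary subarcs pass through any point of $\S$, so each self-intersection point is counted at most $5 \cdot 5 = 25$ times in $\sum_{i,j} i(\gamma_i,\gamma_j)$, and summing over all $i(\gamma,\gamma)$ self-intersection points of $\gamma$ gives the bound.

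I would organize the write-up as: (1) reduce to the covering-multiplicity statement "each point of $\S$ lies on at most $5$ boundary subarcs"; (2) prove the multiplicity statement by the universal-cover hexagon count above; (3) conclude by the double-counting of self-intersection points. The main obstacle is step (2), specifically controlling the case where several boundary subwords share a carrying geodesic or where one subword has multiple nearby lifts — getting the constant down to $5$ (rather than something larger but still bounded) requires using both that $\tilde\gamma$ is embedded and the precise $N_1$-versus-$N_2$ choice in the definition, and is presumably where the hypothesis $n \ge 6 \Rightarrow$ use $N_2$ earns its keep. If the clean constant $5$ is awkward, the argument still goes through with any uniform bound $M$ on the covering multiplicity, at the cost of replacing $25$ by $M^2$; but I would aim to match the stated constant by a careful picture of the tiling near a single hexagon.
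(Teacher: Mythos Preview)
Your overall reduction is exactly right and matches the paper: both show that any point of $\gamma$ lies on at most five boundary subarcs, whence each self-intersection contributes to at most $5^2 = 25$ ordered pairs in the double sum. The gap is in your step (2). The claim that ``at most five distinct lifts $\tilde\beta$ of pants curves have $H \subseteq N_k(\tilde\beta)$'' is false for $k=2$: the hexagon $H$ already touches three pants-curve lifts, and each of its six edge-neighbours contributes further ones, so a direct tiling count gives on the order of a dozen such $\tilde\beta$, not five. Your reading of the $n \le 5$ / $n \ge 6$ dichotomy is also off: when $n \le 5$ the multiplicity bound is trivial because there are at most five subarcs in total, but the dichotomy is not engineered to cap the number of nearby pants-curve lifts (it is used elsewhere, e.g.\ in Lemmas~\ref{lem:PiHomeo} and~\ref{lem:RelevantAndBridgingPercent}).

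The paper obtains the multiplicity bound by a different mechanism, exploiting the linear order of the subarcs along the single geodesic $\tilde\gamma$ rather than a static count of curves near $H$. The key observation is that for any two distinct lifts $\tilde\beta,\tilde\beta'$, a geodesic passes through at most four hexagons of $N_2(\tilde\beta)\cap N_2(\tilde\beta')$; hence any two boundary subarcs overlap along $\tilde\gamma$ in at most four hexagons. Combined with the fact that no boundary subarc contains another (their left and right endpoints are both strictly ordered along $\tilde\gamma$), this forces at most five subarcs through any single hexagon. If you replace the static count of nearby $\tilde\beta$'s by this pairwise-overlap-plus-non-containment argument, your outline goes through with the stated constant.
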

\begin{proof}
 Parameterize $\gamma$ by $\gamma : [0,1] \rightarrow \S$. Then the self-intersection points of $\gamma$ are given by times $t_1 < t_2 < \dots < t_n$ and $s_1, \dots, s_n$ so that $\gamma(t_k) = \gamma(s_k)$ for each $k$. For each pair $(t_k, s_k)$, we need to bound the number of pairs $(\gamma_i,\gamma_j)$ so that $t_k$ is in the domain of $\gamma_i$ and $s_k$ is in the domain of $\gamma_j$.
 
 Choose an intersection time $t_k$. Lift $\gamma$ to the universal cover, and consider one lift of $\gamma(t_k)$. This is some point $\tilde x$ on $\tilde \gamma$. We will count the number of boundary subarcs whose lifts contain $\tilde x$.
 
 The point $\tilde x$ lies in some hexagon $h$. So we really just need to count the number of boundary subarcs whose lifts to $\tilde \gamma$ pass through $h$. 
    
\begin{figure}[h!]
 \centering 
 \includegraphics{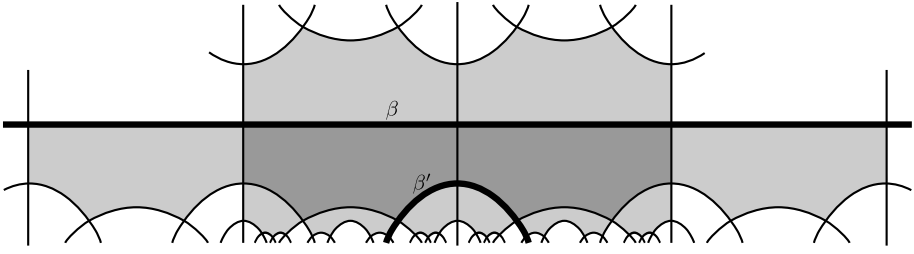}
 \caption{Intersections of the 1- and 2- hexagon neighborhoods of $\beta$ and $\beta'$ are shaded.}
 \label{fig:NbhdIntersection}
\end{figure}

Suppose $\tilde \beta$ and $\tilde \beta'$ are the lifts of two curves in $\Pi$. Consider the intersection $I = N_2(\beta) \cap N_2(\beta')$ of their two hexagon neighborhoods (Figure \ref{fig:NbhdIntersection}). Between the first time $\tilde \gamma$ enters $I$ and last time it leaves, it will pass through at most 4 hexagons. Thus, the lifts of two boundary subarcs overlap in at most 4 hexagons.  

No boundary subarc is contained in any other boundary subarc. Therefore, any hexagon $h$ intersects the lifts of at most 5 boundary subarcs. So any intersection time $t_k$ or $s_k$ lies in the domain of at most 5 boundary subarcs. Therefore, for each pair $(t_k, s_k)$, there are at most 25 pairs $(\gamma_i, \gamma_j)$ so that $t_k$ is in the domain of $\gamma_i$ and $s_k$ is in the domain of $\gamma_j$. 
\end{proof}

\subsection{Two types of boundary subarcs}
Suppose $b_i$ is a boundary subword lying on the simple closed curve $\beta_j \in \Pi$. Recall that we had two types of boundary subwords. If $b_i$ is an interior boundary subword, then we say that $\gamma_i$ is an \textbf{interior} boundary subarc. In this case, $\tilde \gamma_i$ does not intersect $\tilde \beta_j$. 
 
Label the two sides of (a regular neighborhood of) $\beta_j$ by $\beta_j^+$ and $\beta_j^-$. Then the seam edges $s_{i-1}$ and $s_i$ enter and exit the same side of $\beta_j$, respectively. If they enter and exit through $\beta_j^+$, we say that $\gamma_i$ \textbf{lies on the positive side} of $\beta_j$. Otherwise, $\gamma_j$ lies on the negative side. Let
\[
 \Gamma'_{2j} = \{ \gamma_i \ | \ \gamma_i \mbox{ lies on the positive side of } \beta_j\}
 \]
 \[
 \Gamma'_{2j+1} = \{ \gamma_i \ | \ \gamma_i \mbox{ lies on the negative side of } \beta_j\} 
\]

 Now let
 \[
  \Gamma' = \bigcup_{j=1}^{2m} \Gamma'_l
 \]
 This is the set of all interior boundary subarcs of the pair $(\gamma, \Pi)$. 
 
 Otherwise, $b_i$ is a bridging boundary subword, and $\tilde \gamma_i$ intersects $\tilde \beta_j$. Then we call $\gamma_i$ a \textbf{bridging} boundary subarc, or, alternatively, say that $\gamma_i$ \textbf{bridges} $\beta_j$. Let 
 \[
 B_j = \{ \gamma_i \ | \ \gamma_i \mbox{ bridges } \beta_j\}
 \]
 and let 
 \[
  B = \bigcup_{j=1}^m B_j
 \]
 be the set of all bridging boundary subarcs of the pair $(\gamma, \Pi)$.


\section{Intersection bounds for bridging boundary subwords}
\label{sec:BridgingBdrySubwordIntBound}
  We first prove Proposition \ref{prop:BdrySubwordIntBound} for bridging boundary subwords. The formulation is as follows:
  \begin{lem}
  \label{lem:BridgingBdrySubwordIntBound}
   There is a pants decomposition $\Pi$ and some element $f \in \Mod_\S$ so that if $B$ is the set of bridging boundary subarcs of the pair $(f\cdot \gamma, \Pi)$, then
   \[
    \sum_{\gamma_i \in B} |b_i| \lesssim K \mbox{ and } \# B \leq c_\S \sqrt K
   \]
  where $\gamma_i = \alpha(b_i)$ for each $i$, and the constants depend only on the topology of $\S$.
  \end{lem}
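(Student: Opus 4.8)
The plan is to produce $\Pi$ and $f$ from an auxiliary hyperbolic metric tailored to $\gamma$, and then read both bounds off the collar lemma. By the result of \cite{AGPS16} recalled in the introduction, there is a hyperbolic metric $Y$ on $\S$ with $l_Y(\gamma) \le c_\S \sqrt K$ and $\mathrm{injrad}(Y) \ge d_\S/\sqrt K$, where $c_\S, d_\S$ depend only on the topology of $\S$. I would take $\Pi$ to be a Bers pants decomposition of $(\S,Y)$, so that $l_Y(\beta) \le B_\S$ for every $\beta \in \Pi$, with $B_\S$ the Bers constant (again topological). The word $w_\Pi(\gamma)$ is defined via the hexagon decomposition that the \emph{fixed} metric $X$ puts on $\Pi$, which differs from the one $Y$ puts on $\Pi$ by a Dehn twist along each curve of $\Pi$; the mapping class $f$ in the statement will be the product of the Dehn twists needed to reconcile the two, chosen at the end.

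For the count $\# B$: by definition a bridging boundary subword is a maximal run of boundary edges at which the piecewise-geodesic representative $p(\gamma)$ passes from one side of a curve of $\Pi$ to the other, so $\# B$ equals the number of times $p(\gamma)$ crosses $\Pi$. The three moves producing $p(\gamma)$ from $\gamma$ in Lemma \ref{lem:SurfaceProjection} all fix the crossings with $\Pi$ — the straightening of segments is relative to their endpoints, which already lie on hexagon edges and where the crossings of $\Pi$ occur, while the back-tracking and interior-subword moves are supported inside single pairs of pants — so $\# B = i(p(\gamma),\Pi) = i(\gamma,\Pi)$, the last equality because $\gamma$ and the curves of $\Pi$ are represented by geodesics, hence are in minimal position. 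By the collar lemma each $\beta \in \Pi$, having $Y$-length at most $B_\S$, carries an embedded $Y$-collar of width at least some $w_0 > 0$ depending only on $\S$, and these collars are pairwise disjoint; the $Y$-geodesic representative of $\gamma$ crosses the collar of $\beta$ exactly $i(\gamma,\beta)$ times, spending length at least $w_0$ on each crossing, so $w_0\, i(\gamma,\Pi) \le l_Y(\gamma) \le c_\S\sqrt K$ and hence $\# B \lesssim \sqrt K$.

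For the total word-length $\sum_{\gamma_i \in B}|b_i|$: up to a bounded multiplicative and additive error, $|b_i|$ counts (twice) the number of full turns the corresponding arc of $\gamma$ makes around its pants curve $\beta$ inside the collar of $\beta$. Writing $\tau_\beta$ for the total number of such turns over all bridging subarcs lying on $\beta$, the arcs of $\gamma$ realizing these turns are disjoint subarcs lying in the disjoint collars, and a subarc winding $\tau$ times around $\beta$ has $Y$-length at least $c\,\tau\,l_Y(\beta)$; hence $\sum_{\beta \in \Pi}\tau_\beta\, l_Y(\beta) \lesssim l_Y(\gamma) \le c_\S\sqrt K$. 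Since $l_Y(\beta) \ge 2\,\mathrm{injrad}(Y) \ge 2 d_\S/\sqrt K$ for every $\beta$, this forces $\sum_{\beta}\tau_\beta \lesssim \sqrt K \cdot l_Y(\gamma) \lesssim K$, and therefore $\sum_{\gamma_i \in B}|b_i| \lesssim K$. Passing from $\gamma$ to $f\gamma$ with $f$ the reconciling product of Dehn twists along $\Pi$ changes neither $i(\gamma,\Pi)$ nor the collar geometry, so both estimates survive for the pair $(f\gamma,\Pi)$, which is what is claimed.

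The main obstacle is making the combinatorial model \emph{faithful} to this geometric picture: one has to verify carefully that $\# B$ is exactly $i(\gamma,\Pi)$ and that the word-length $|b_i|$ of each bridging subword is comparable to the collar-winding of the matching geodesic subarc, and then control the discrepancy between the auxiliary metric $Y$ (where the collar estimates live) and the fixed metric $X$ (which defines the hexagon decomposition, hence the word) — a discrepancy which is one Dehn twist per pants curve and is exactly what forces the element $f$ into the statement. A self-contained alternative, which avoids quoting \cite{AGPS16} (whose own proof uses the model built in this paper), is to work inside the subsurface $F(\gamma) \subseteq \S$ that $\gamma$ fills — whose complexity is bounded in terms of $\S$ — and choose $\Pi$ to restrict on $F(\gamma)$ to a pants decomposition minimizing $i(\gamma,\cdot)$; one then shows, using Lemma \ref{lem:OverlapBound}, that too many crossings of, or too much net twist relative to, this minimal decomposition would force $\sum_{i,j}i(\gamma_i,\gamma_j)$, and hence $i(\gamma,\gamma)$, above $K$. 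In that variant the main obstacle becomes this "grid" lower bound on self-intersection number.
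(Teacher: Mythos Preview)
Your primary argument is circular. The input you take from \cite{AGPS16} --- a metric $Y$ with $l_Y(\gamma)\lesssim\sqrt K$ and injectivity radius $\gtrsim 1/\sqrt K$ --- is proved there \emph{using} the combinatorial model developed in this paper, and in particular using Proposition~\ref{prop:NicePants}, which is exactly the content of the bound $\#B\le c_\S\sqrt K$ you are trying to establish. You flag this yourself, but then the only argument you actually carry out is the circular one; the ``self-contained alternative'' at the end is a one-sentence outline that names the strategy (minimize $i(\gamma,\cdot)$, then show too many crossings or too much twist forces $i(\gamma,\gamma)>K$ via Lemma~\ref{lem:OverlapBound}) without doing any of the work. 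That outline \emph{is} the paper's approach, and the substance lies precisely in the steps you omit: the region-growing construction of $\Pi$ in Proposition~\ref{prop:NicePants}, and the conversion of the pairwise bound $\sum|\Delta_i-\Delta_j|\lesssim K$ into $\sum|\Delta_i|\lesssim K$ after a balancing Dehn twist (the paper does this by interpreting the sum as the area of a convex polygon and bounding an inscribed triangle with Heron's formula).

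Setting the circularity aside, your metric-based route is genuinely different and, were the AGPS metric available independently, would be a clean shortcut: both bounds fall out of the collar lemma with no combinatorics. Two technical points would still need care. First, the $X$- and $Y$-hexagon decompositions of $\Pi$ differ not by a Dehn twist per pants curve but by a \emph{half}-twist per curve (the two seam endpoints on each $\beta$ are antipodal, so the matching ambiguity is in $\tfrac12\Z$); this is not a mapping class, though it changes each $|b_i|$ by at most one, so the estimates survive after absorbing a bounded error and then applying the nearest honest Dehn twist. Second, the assertion that $|b_i|$ equals twice the $Y$-collar winding up to bounded error is exactly the content of inequality~(\ref{eq:biBoundedByDelta}) in the paper and requires the same local analysis of how $\tilde\gamma$ threads through $N_1(\tilde\beta)$; you should not treat it as obvious.
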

  \subsection{Idea of proof} The proof is divided into two major parts, corresponding to the two inequalities in the lemma.

  \textbf{Step 1.} First, we find a pants decomposition so that $w_\Pi(\gamma)$ satisfies 
 \[
  \# B \lesssim \sqrt K
 \]

 Take any pants decomposition $\Pi$. The curve $\gamma$ may cross each curve in $\Pi$ multiple times. The number of times $\gamma$ crosses all the curves in $\Pi$ is exactly the number of bridging boundary subwords (Proposition \ref{prop:BridgingIsCrossing}). So in fact, we find a pants decomposition $\Pi$ so that 
 \[
 i(\gamma, \Pi) \lesssim \sqrt K
 \]
(Proposition \ref{prop:NicePants}).
  
 \begin{figure}[h!]
  \centering 
  \includegraphics{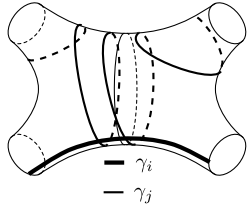}
  \caption{The difference in twisting of $\gamma_i$ and $\gamma_j$ creates intersections between them.}
  \label{fig:OverlapBridging}
 \end{figure}

 \textbf{Step 2.} Next we find a $\gamma' \in \Mod_\S \cdot \gamma$ that differs from $\gamma$ by Dehn twists about $\Pi$ and satisfies:
 \[
  \sum_B |b_i| \lesssim K
 \]
 (Lemma \ref{lem:BridgingTotalLength}.)
 
 As a bridging boundary subarc $\gamma_i$ crosses a curve $\beta_k \in \Pi$, it may spiral around it. Note that a single arc twisting by itself does not contribute anything to intersection number. But suppose two boundary subarcs $\gamma_i$ and $\gamma_j$ bridge some $\beta_k \in \Pi$. If $\gamma_i$ twists $\Delta_i$ times, and $\gamma_j$ twists $\Delta_j$ times, then
 \[
  i(\gamma_i, \gamma_j) \approx |\Delta_i-\Delta_j|
 \]
  where $\Delta_i$ is the signed twisting number of $\gamma_i$ (Figure \ref{fig:OverlapBridging}.) So we get 
 \[
  \sum_{\gamma_i, \gamma_j \i B_k} |\Delta_i - \Delta_j| \lesssim K
 \]
 (Claim \ref{cla:BridgingDifference}.)
 
  We apply Dehn twists about curves in $\Pi$ to $\gamma$ so that the total signed twisting about each $\beta \in \Pi$ is as small as possible (Claim \ref{cla:ApplyDehnTwists}).  This gives us a new curve $\gamma'$. 
 
 Since $\gamma'$ has the smallest possible total twisting about curves in $\Pi$, we can show that, in fact, 
 \[
  \sum_{B_k} |\Delta_i'| \lesssim K
 \]
 
 Lastly, we consider the word $w_\Pi(\gamma') = b_1's_1' \dots b_n' s_n'$. Note that if $|b_i'| = 2m$ then $\gamma_i$ twists roughly $m$ times. Thus, $|\Delta_i'| \approx |b_i'|$ for each bridging boundary subword $b_i$. This gives us that
 \[
  \sum_B |b_i'| \lesssim K
 \]
 where $B$ is the set of bridging boundary subarcs of $\gamma'$ (Claim \ref{cla:BridgingTotalLength}). This gives us Lemma \ref{lem:BridgingBdrySubwordIntBound} (Section \ref{sec:BridgingCombined}).

 \subsection[Choosing nice pants (decompositions)]{Number of subarcs}

 Given any curve $\gamma$ and pants decomposition $\Pi$, let $i(\gamma, \Pi)$ denote the total number of intersections between $\gamma$ and the curves in $\Pi$. So
 \[
  i(\gamma, \Pi) = \sum_{\beta_i \in \Pi} i(\gamma, \beta_i)
 \]

\begin{prop}
\label{prop:BridgingIsCrossing}
Take any curve $\gamma$ and pants decomposition $\Pi$. Let $B$ be the set of bridging boundary subarcs of the pair $(\gamma, \Pi)$. Then
\[
 \# B = i(\gamma, \Pi)
\]

\end{prop}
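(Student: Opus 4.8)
The plan is to set up a bijection between crossings of $\gamma$ with $\Pi$ and bridging boundary subarcs, directly from the definitions. Recall that $w_\Pi(\gamma) = b_1 s_1 \dots b_n s_n$, where each $s_i$ is a single seam edge and each $b_i$ is a (possibly empty) run of boundary edges, and that $b_i$ is \emph{bridging} precisely when the arc corresponding to $s_{i-1} b_i s_i$ crosses some curve of $\Pi$. The first step is to recall from the construction in Lemma \ref{lem:SurfaceProjection} that $p(\gamma)$ was obtained from $\gamma$ by a free homotopy in which each segment of $\gamma$ is pushed to the boundary of the hexagon containing it, followed by removing back-tracking. In particular, the only edges of $p(\gamma)$ that cross a curve $\beta_j \in \Pi$ are seam edges that run from one pair of pants to the adjacent one, and each such crossing seam edge meets $\Pi$ in exactly one point. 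So $i(\gamma, \Pi) = i(p(\gamma), \Pi)$ equals the number of crossing seam edges in the cyclic word $w_\Pi(\gamma)$ (here one must note that $p(\gamma)$ can be taken to be a representative realizing the geometric intersection number with each $\beta_j$, since after the homotopies it meets each $\beta_j$ minimally — this uses that $p(\gamma)$ does not back-track, hence has no bigon with any $\beta_j$).

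Next I would match crossing seam edges with bridging boundary subwords. By definition a boundary subword $b_i$ is bridging iff the subarc $s_{i-1} b_i s_i$ crosses a curve of $\Pi$; since $b_i$ consists only of boundary edges (which lie \emph{on} curves of $\Pi$ and do not cross them transversally) and $s_{i-1}, s_i$ are the only seam edges available, the subarc $s_{i-1} b_i s_i$ crosses $\Pi$ iff at least one of $s_{i-1}, s_i$ is a crossing seam edge. The key claim is that each crossing seam edge $s_i$ makes \emph{exactly one} of the two adjacent boundary subwords $b_i$ and $b_{i+1}$ bridging, namely the one on the side where $s_i$ "arrives" after crossing. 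Concretely, if $s_i$ crosses $\beta_j$, then on the far side it continues into the boundary subword $b_{i+1}$ before the next seam edge; the subword $b_{i+1}$ is then read as bridging $\beta_j$. On the other side, $s_i$ leaves $b_i$, and $b_i$ is bridging iff its \emph{other} adjacent seam edge $s_{i-1}$ is also a crossing edge — but that crossing is then assigned to $b_i$ from the $s_{i-1}$ side, not the $s_i$ side. Thus the assignment "crossing seam edge $s_i \mapsto$ the boundary subword it enters" is a well-defined map from crossing seam edges to bridging boundary subwords, and one checks it is a bijection: every bridging $b_i$ has at least one crossing seam edge among $\{s_{i-1}, s_i\}$, and one sorts out (using that consecutive seam edges entering the same curve from opposite sides is exactly the configuration forced by non-back-tracking, as in Step 3 of Lemma \ref{lem:SurfaceProjection}) that exactly one of them is the "incoming" one for $b_i$.

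The main obstacle is bookkeeping the possible degenerate configurations: empty boundary subwords $b_i = \emptyset$ (where $s_{i-1} s_i$ is a subarc crossing $\Pi$, necessarily with $s_{i-1}, s_i$ entering the shared curve from opposite sides — so $b_i$ is bridging, and I must make sure such $b_i$ is counted once, matched to the incoming seam edge), short subwords $|b_i| = 1$ that are forced to be bridging by the construction, and subwords where \emph{both} $s_{i-1}$ and $s_i$ cross $\Pi$ (here $b_i$ still counts as a single bridging subword, and I must confirm the bijection assigns it only one crossing edge while the other crossing edge is assigned to a different subword). I would handle this by carefully formalizing the "incoming seam edge" of a bridging subword: walking along $p(\gamma)$ in its orientation, $b_i$ is entered via $s_{i-1}$; declare $s_{i-1}$ the incoming edge of $b_i$. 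Then $s_i \mapsto b_{i+1}$ is a bijection from crossing seam edges onto bridging subwords once one verifies that a seam edge is crossing iff it is the incoming edge of a bridging subword — which again reduces to the local picture near a single curve $\beta_j$ of $\Pi$ and the non-back-tracking property, exactly the analysis already carried out in the proof of Lemma \ref{lem:SurfaceProjection}. Summing over all $\beta_j \in \Pi$ gives $\#B = i(\gamma, \Pi)$.
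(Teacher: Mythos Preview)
Your argument rests on a misconception about the hexagon decomposition: seam edges do \emph{not} run from one pair of pants to another, and no seam edge crosses any curve in $\Pi$. By definition each seam edge lies entirely inside a single pair of pants, joining two (arcs of) curves in $\Pi$ that bound that pair of pants; its endpoints sit on $\Pi$ but its interior does not meet $\Pi$. So the sentence ``the only edges of $p(\gamma)$ that cross a curve $\beta_j \in \Pi$ are seam edges that run from one pair of pants to the adjacent one'' is simply false, and with it the notion of a ``crossing seam edge'' on which your bijection is built. Your subsequent case analysis (incoming versus outgoing seam, both $s_{i-1}$ and $s_i$ crossing, etc.) is therefore trying to organize objects that do not exist.

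The crossing of $p(\gamma)$ with a curve $\beta_j\in\Pi$ does not happen along any individual edge; it happens at a bridging boundary subword $b_i$ lying on $\beta_j$, where the seam $s_{i-1}$ lies in the pair of pants on one side of $\beta_j$ and the seam $s_i$ lies in the pair of pants on the other side (including the degenerate case $b_i=\emptyset$, where $s_{i-1}$ and $s_i$ share a vertex on $\beta_j$ but sit on opposite sides). So the correspondence you are after is already tautological at the level of $p(\gamma)$: bridging boundary subwords are precisely the places where $p(\gamma)$ switches sides of some $\beta_j$. What actually needs proof is that this number equals the \emph{geometric} intersection number $i(\gamma,\Pi)$. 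The paper does this by passing to the universal cover: each lift $\tilde\beta_j$ separates $\tilde\S$, the geodesic lift $\tilde\gamma$ crosses a given $\tilde\beta_j$ at most once, and since $\tilde p(\gamma)$ is homotopic to $\tilde\gamma$ it crosses that same $\tilde\beta_j$ iff $\tilde\gamma$ does---and the only way $\tilde p(\gamma)$ can switch sides of $\tilde\beta_j$ is via a (lift of a) bridging boundary subword on it. This gives the bijection in one stroke and avoids any perturbation or bigon argument.
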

  
\begin{proof}

 It is easier to see this in the universal cover $\tilde \S$ of $\S$. Take a lift $\tilde \gamma$ of $\gamma$. Suppose $\tilde \gamma$ intersects a lift $\tilde \beta_j$ of $\beta_j \in \Pi$. Let $p(\gamma)$ be the path associated to $w_\Pi(\gamma)$. Lift the homotopy between $p(\gamma)$ and $\gamma$ so that $\gamma$ lifts to $\tilde \gamma$ and $p(\gamma)$ lifts to a curve $\tilde p(\gamma)$. Since $\tilde \gamma$ crosses $\tilde \beta_j$, the curve $\tilde p(\gamma)$ must also cross $\tilde \beta_j$. But $\tilde p(\gamma)$ is made up of arcs that lie on lifts of curves in $\Pi$, and seam arcs connecting curves in $\Pi$. So $\tilde p(\gamma)$ crosses $\tilde \beta_j$ if and only if it has a bridging boundary subword $b_i$, possibly empty, that lies on $\tilde \beta_j$. Therefore, $B$ is in one to one correspondence with intersections between $\gamma$ and $\Pi$.
\end{proof}
Note that if we choose a pants decomposition $\Pi$ at random, then we only have 
\[
i(\gamma, \Pi) \leq l(\gamma) l(\Pi)
\]
where $l(\Pi) = \sum_{\beta \in \Pi} l(\beta)$. This follows, for example, by an argument similar to the proof of \cite[Theorem 1.1]{Basmajian13}. Because of this, the following proposition may be of independent interest.

\begingroup
\def \thetheorem{\ref{prop:NicePants}}
  \begin{prop}
  Let $\gamma \in \G^c$ with $i(\gamma, \gamma) = K \geq 1$. Then there exists a pants decomposition $\Pi$ so that
  \[
   i(\gamma, \Pi) \leq c_\S \sqrt K
  \]
 where the constant $c_\S$ depends only on the topology of $\S$.
 \end{prop}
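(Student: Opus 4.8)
The plan is to build the desired pants decomposition $\Pi$ directly out of the curve $\gamma$ itself, rather than trying to optimize over all pants decompositions. Treat $\gamma$ as a filling $4$-valent graph $G$ on $\S$ (if $\gamma$ is not filling, first pass to the subsurface it fills, build the decomposition there, and extend arbitrarily to the rest of $\S$). Since $i(\gamma,\gamma)=K$, the graph $G$ has at most $K$ vertices, at most $2K$ edges, and by the Euler characteristic bound already used in Section \ref{sec:WhyFinite}, at most $\lesssim K$ complementary faces, each a disk or an annulus retracting to a boundary component of $\S$. The idea is to take $\Pi$ to consist of curves that each intersect $\gamma$ a bounded number of times locally — built from arcs dual or parallel to edges of $G$ — and then argue that only $\lesssim\sqrt K$ of them can be needed.

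First I would produce a collection of disjoint simple closed curves carried by a small neighborhood of $G$ whose union, together with $\partial\S$, forms a pants decomposition, and such that each such curve crosses $\gamma$ at most a uniformly bounded number of times (a constant depending only on how a simple closed curve can sit in a neighborhood of a $4$-valent vertex). A convenient way: dualize. The complement $\S\setminus G$ has $\lesssim K$ faces; the ``dual'' structure lets one choose short arcs connecting faces across edges of $G$, and a spanning-tree/co-tree argument on the dual graph picks out a bounded-intersection generating set of curves. The total number of curves in any pants decomposition of $\S$ is exactly $3g-3+n$, a constant $c_\S$ depending only on the topology of $\S$; so $i(\gamma,\Pi)=\sum_{\beta\in\Pi} i(\gamma,\beta)$ is at most $(3g-3+n)$ times the maximal individual intersection $\max_\beta i(\gamma,\beta)$.

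Hence the crux is: \textbf{bound $\max_{\beta\in\Pi} i(\gamma,\beta)$ by $\lesssim\sqrt K$.} This is where I expect the real work to be, and it is the natural place to invoke the quadratic relationship between length/complexity and self-intersection. Two routes: (a) use the result of \cite{AGPS16} cited in the introduction — there is a metric $X$ with $l_X(\gamma)\lesssim\sqrt K$ and injectivity radius $\gtrsim 1/\sqrt K$ — take $\Pi$ to be a Bers pants decomposition for $X$, whose curves have length $\lesssim$ (a constant depending on the thick-thin structure) $\cdot\sqrt K$; then $i(\gamma,\beta)\lesssim l_X(\gamma)\,l_X(\beta)/(\text{inj rad})^2\lesssim \sqrt K\cdot\sqrt K\cdot K$... which is too weak, so this naive bound must be sharpened, or (b) argue combinatorially: if some $\beta\in\Pi$ meets $\gamma$ in $N$ points, then $\gamma$ runs through the annular neighborhood of $\beta$ creating $\gtrsim N^2$ ... no — a single curve crossing $\beta$ many times need not self-intersect. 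So the correct mechanism is the one previewed in the ``Idea of proof'' for Lemma \ref{lem:BridgingBdrySubwordIntBound}: crossings of $\gamma$ with $\beta$ that are ``parallel'' (isotopic rel $\beta$) contribute nothing, but \emph{distinct} parallelism classes of crossing arcs force intersections. One shows that after an isotopy of $\gamma$ (not changing its class, hence not changing $i(\gamma,\gamma)$ or the orbit), the number of distinct strands of $\gamma$ crossing each $\beta$ is $\lesssim\sqrt K$, because $m$ mutually non-parallel arcs crossing $\beta$ in the same direction, when they close up into $\gamma$, create $\gtrsim m^2$ self-intersections of $\gamma$. Feeding $m\lesssim\sqrt K$ per parallelism class, and observing there are only boundedly many ``directions'' through $\beta$ available in a bounded-complexity surface, yields $i(\gamma,\beta)\lesssim\sqrt K$ after isotopy.

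I would therefore organize the proof as: (1) replace $\gamma$ by the filling curve on the subsurface it fills; (2) choose $\Pi$ built from a neighborhood of $\gamma$ as above so that $\#\Pi = 3g-3+n \lesssim 1$; (3) isotope $\gamma$ to minimal position with respect to $\Pi$; (4) for each $\beta\in\Pi$, partition the crossings into parallelism classes and show each class has size $\lesssim\sqrt K$ via the $m\mapsto m^2$ self-intersection estimate, with a bounded number of classes; (5) sum over the boundedly many curves of $\Pi$. The main obstacle is step (4) — making precise and uniform (in $\S$) the statement ``$m$ essentially distinct crossing strands of a single closed curve through a simple closed curve force $\gtrsim m^2$ self-intersections,'' and controlling the number of parallelism classes; everything else is bookkeeping with Euler characteristic and the constant $3g-3+n$.
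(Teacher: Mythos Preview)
Your proposal has a genuine gap at the step you yourself flag as ``the crux.'' The claimed mechanism in step~(4)---that $m$ crossing strands of $\gamma$ through $\beta$ force $\gtrsim m^2$ self-intersections of $\gamma$---does not hold in either reading. If the $m$ arcs are \emph{parallel} (isotopic rel $\beta$) they can be made disjoint, so contribute nothing. If they are \emph{non-parallel}, they can still be disjoint: on any surface of positive complexity there are many isotopy classes of pairwise disjoint essential arcs. Concretely, take any $\gamma$ with $i(\gamma,\gamma)=1$ and apply a high power of a Dehn twist about some $\beta$; the result still has $K=1$ but $i(\gamma,\beta)$ is arbitrarily large, and the crossing arcs fall into several isotopy classes in $\S\setminus\beta$. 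So there is no $m\mapsto m^2$ lower bound available from the crossing pattern alone, and hence no bound on $\max_\beta i(\gamma,\beta)$ for a $\Pi$ chosen in advance. (Route~(a) is separately problematic: the result of \cite{AGPS16} you invoke is proved \emph{using} the combinatorial model of this paper, including the present proposition, so citing it here is circular.)

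The paper's argument is structurally different: it does not fix $\Pi$ and then bound intersections, but \emph{constructs} each pants curve so as to have small intersection with $\gamma$. The key device is a region-growing argument. Take $\alpha$ to be an essential, non-peripheral (non-separating if possible) simple closed curve minimizing $i(\alpha,\gamma)$, and let $N_i(\alpha)$ be the union of complementary faces of $\gamma$ within combinatorial distance $i$ of $\alpha$. Since there are $\lesssim K$ faces, within $n\le\tfrac12\sqrt K$ steps either the growth slows to $\le 4\sqrt K$ new faces, or $\alpha$ becomes non-separating in $N_{n+1}(\alpha)$. In either case one builds by hand a competitor simple closed curve $\beta$ with $i(\beta,\gamma)\lesssim \sqrt K + \tfrac12\, i(\alpha,\gamma)$; minimality of $\alpha$ then forces $i(\alpha,\gamma)\lesssim\sqrt K$. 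One cuts along $\alpha$ and iterates on the pieces (now with arcs rather than a closed curve), picking up a bounded multiplicative factor at each of the $3g-3+n$ stages. The $\sqrt K$ thus enters via pigeonhole on the $K$ faces, not via any quadratic self-intersection count.
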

 \endgroup
 
 The author would like to thank Kasra Rafi for the conversation in which we came up with the main idea of this proof.
\begin{proof}
 We find the pants decomposition $\Pi$ one curve at a time. Each time we add a simple closed curve $\beta$ to $\Pi$, we cut along $\beta$. So $\S$ gets progressively cut along simple closed curves until we decompose $\S$ into the union of pairs of pants. 
 
 So suppose we have a (connected) surface $\S'$ with $b$ boundary components, that either contains a single non-simple closed geodesic or that is traversed by at most $\sqrt K$ geodesic arcs connecting its boundary components, so that these arcs intersect each other at most $K$ times. Let $\gamma$ denote this single closed geodesic or collection of geodesic arcs. Then we find a simple closed curve $\beta$ that crosses $\gamma$ at most $ 57 \sqrt K$ times. We do this as follows:
 
 \begin{figure}[h!]
  \centering
  \includegraphics{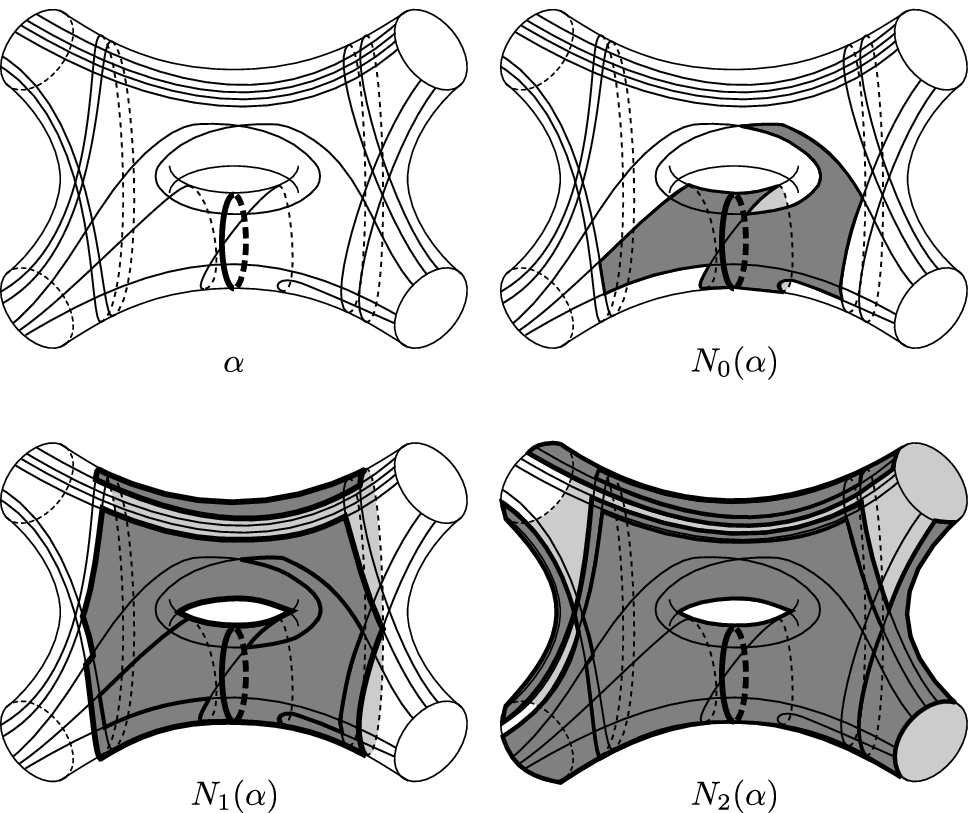}
  \caption{}
  \label{fig:N_i}
 \end{figure}
 
 \textbf{Step 1.} We choose an essential, non-peripheral, non-separating simple closed curve $\alpha$ so that $i(\alpha, \gamma)$ is as small as possible.  For example, in Figure \ref{fig:N_i}, $\alpha$ intersects $\gamma$ 3 times. If $\S$ has genus 0, then we drop the non-separating condition, as in Figure \ref{fig:Separating}.
  
 \textbf{Step 2.} The curve $\gamma$ cuts $\S$ into regions. An Euler characteristic argument implies that $\sqrt K$ arcs with $K$ total self-intersections cut $\S$ into at most $K + \sqrt K$ regions. The number of regions does not need to be very precise. As $\sqrt K \leq K$, we can say we have at most $2 K$ regions.
 
 \begin{rem}
 \label{rem:RegionIntNumber}
  Suppose an arc $\eta$ passes through $i+1$ regions. Then it crosses from one region to another $i$ times. Since $\eta$ can pass through region corners, and at most 4 regions can meet at each corner, this means that
  \[
   \# \eta \cap \gamma \leq 2i
  \]
 where we do not count any intersections between the endpoints of $\eta$ with $\gamma$.
 \end{rem}

 This motivates the following definition. We define neighborhoods $N_0(\alpha) \subset N_1(\alpha) \subset \dots \subset N_i(\alpha) \subset \dots $, where $N_0(\alpha)$ contains all the regions that touch $\alpha$, and $N_{i+1}(\alpha)$ contains all the regions that touch $N_i(\alpha)$ (in either an edge or a corner.) (Figure \ref{fig:N_i}).
 
 A small caveat is that we want $N_i(\alpha)$ to have essential boundary for each $i$. So suppose $N'_{i+1}(\alpha)$ is the union of $N_i(\alpha)$ and the set of regions that touch $N_i(\alpha)$. Then let $N_{i+1}(\alpha)$ be $N'_{i+1}(\alpha)$ together with all contractible subsets of $\S \setminus N'_{i+1}(\alpha)$. 
 
 \begin{rem} 
 \label{rem:ConnectingRegionToAlpha}
 Any point $x$ on the boundary of $N_i(\alpha)$ can be joined to $\alpha$ by an arc $\eta: x \mapsto \alpha$ that passes through at most $i+1$ regions. In particular, it will intersect $\gamma$ at most $2i$ times.
 \end{rem}

 \textbf{Step 3.} There are finitely many regions, so at some point, there will be an $n$ for which one of the following two things will happen:
 \begin{enumerate}
  \item The curve $\alpha$ separates $N_n(\alpha)$ but not $N_{n+1}(\alpha)$. For example, in Figure \ref{fig:N_i}, $\alpha$ separates $N_0(\alpha)$ but not $N_1(\alpha)$.
  
  Since the nested sequence of neighborhoods $N_0(\alpha) \subset \dots \subset N_i(\alpha) \subset \dots$ eventually fills out the entire surface, either this condition is eventually satisfied, or $\alpha$ separates $\S$ (which is only possible if $\S$ has genus 0.)
  
  \item The number of new regions in $N_{n+1}(\alpha)$ will be fewer than $4 \sqrt K$:
  \[
   \# N_{n+1}(\alpha) \setminus N_n(\alpha) \leq 4 \sqrt K
  \]
  For example, in Figure \ref{fig:N_i}, $N_0(\alpha)$ only has 3 regions. Since $i(\gamma, \gamma) = 39$, this condition is satisfied for $n = -1$ (using the convention that $N_{-1}(\alpha) = \alpha$). 
 \end{enumerate}
 
 We let $n$ be the least number so that one of these two conditions are satisfied. In Figure \ref{fig:N_i}, $n = -1$, because condition 2 is satisfied before condition 1. 
 
 In particular, since condition 2 must be satisfied at some point, and since there are at most $2K$ regions, we must have that
 \[
  n \leq \frac 12 \sqrt K
 \]
 
 We will examine what happens when each of the conditions fails first.
 
 \textbf{Case 1.} Suppose Condition 1 fails first. That is, $\alpha$ separates $N_n(\alpha)$, but does not separate $N_{n+1}(\alpha)$. 
 
 In this case, we construct another essential, non-peripheral, non-separating curve $\beta$ so that 
 \[
  i(\beta, \gamma) \leq 2 \sqrt K + 2 + \frac 12 i(\alpha, \gamma)
 \]
 We chose $\alpha$ so that $i(\alpha, \gamma) \leq i(\beta,\gamma)$. Thus the above inequality implies that 
 \[
 i(\alpha, \gamma) \leq 4 \sqrt K + 4
 \]
 
 The curve $\beta$ will be the concatenation of an arc $\rho$ in $\S \setminus \alpha$ with a subarc $\alpha'$ of $\alpha$. First we build the arc $\rho$ that will join $\alpha$ to itself. So cut $N_{n+1}(\alpha)$ along $\alpha$. The result is connected by assumption, and has two boundary components, $\alpha_+$ and $\alpha_-$, that come from $\alpha$.

 \begin{figure}[h!]
  \centering
  \includegraphics{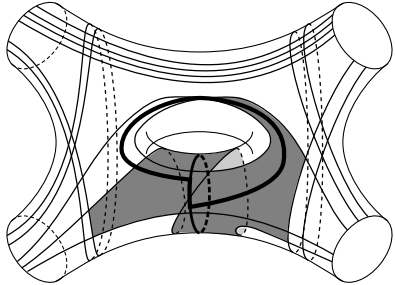}
  \caption{Since $\alpha$ does not separate $N_2(\alpha)$, we can find this curve $\beta$.}
  \label{fig:NonSeparating}
 \end{figure}
 Then we claim that there is a curve $\rho$ joining $\alpha_+$ to $\alpha_-$ that passes through at most $\sqrt K + 2$ regions. There are at most 2 regions, $R_1, R_2$, in $N_{n+1}(\alpha) \setminus N_n(\alpha)$, so that $R_1 \cup R_2$ joins one component of $N_n(\alpha) \setminus \alpha$ to the other. In Figure \ref{fig:NonSeparating}, there is actually a single region in $N_1(\alpha) \setminus N_0(\alpha)$ that connects the two components of $N_0(\alpha) \setminus \alpha$. Take any point $x$ where $R_1$ meets the boundary of $N_n(\alpha)$. Then we can join $x$ to $\alpha$ by an arc that passes through at most $n$ regions. Thus, we can join $\alpha_+$ to $\alpha_-$ by an arc $\rho$ that passes through at most $2n+2$ regions. As remarked above, $n \leq \frac 12 \sqrt K$, so $\rho$ passes through at most $ \sqrt K + 2$ regions. By Remark \ref{rem:RegionIntNumber}, this means that
 \[
  \# \rho \cap \gamma \leq 2 \sqrt K + 2
 \]

 Now we think of $\rho$ as an arc in $\S$. Its endpoints lie on $\alpha$. Thus, we can join the endpoints of $\rho$ by a subarc $\alpha'$ of $\alpha$ so that 
 \[
 \# \alpha' \cap \gamma \leq \frac 12 i(\alpha, \gamma)
 \]
  Take the concatenation $ \beta = \rho \circ \alpha$. Then we have
 \[
  i(\beta, \gamma) \leq 2 \sqrt K + 2 + \frac 12 i(\alpha, \gamma)
 \]
 Note that by construction, $i(\beta, \alpha) = 1$. This means that $\beta$ is essential, non-peripheral, and non-separating. So, as explained above, we get that $i(\alpha, \gamma) \leq 4 \sqrt K + 4$.
 
 \textbf{Case 2.}  Now suppose Condition 2 fails first. That is, $\alpha$ separates $N_{n+1}(\alpha)$, but the number of regions in $N_{n+1}(\alpha) \setminus N_n(\alpha)$ is fewer than $4 \sqrt K$. For example, in Figure \ref{fig:Separating}, $\alpha$ actually separates $\S$, which is a 4-holed sphere. The multi-arc $\gamma$ has 16 self-intersections, and $N_2(\alpha)$ has 4 more regions than $N_1(\alpha)$.
 
 \begin{figure}[h!]
  \centering 
  \includegraphics{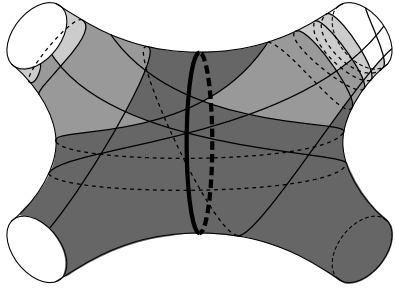}
  \caption{$N_0(\alpha), N_1(\alpha)$ and $N_2(\alpha)$ are shown in different shades of gray.}
  \label{fig:Separating}
 \end{figure}
 
 In this case, let $\beta_1, \dots ,\beta_m$ be the boundary components of $N_{n+1}(\alpha)$. Consider one such boundary component $\beta_i$. Then any point on $\beta_i$ either lies on the boundary of a region that is new to $N_{n+1}(\alpha)$, or it lies on the boundary of $\S$. There are at most $4 \sqrt K$ regions in $N_{n+1}(\alpha) \setminus N_n(\alpha)$, and $2\sqrt K$ regions touching $\partial \S$ (because $\gamma$ consists of at most $\sqrt K$ arcs, which cut $\partial \S$ into $2 \sqrt K$ pieces). Thus, $\beta_i$ passes through the boundary of at most $6 \sqrt K$ regions. Because regions can meet in corners, this implies that
 \[
  i(\beta_i, \gamma) \leq 12 \sqrt K
 \]
 for each boundary component $\beta_i$ in $N_{n+1}(\alpha)$. 
 
 \begin{figure}[h!]
  \centering 
  \includegraphics{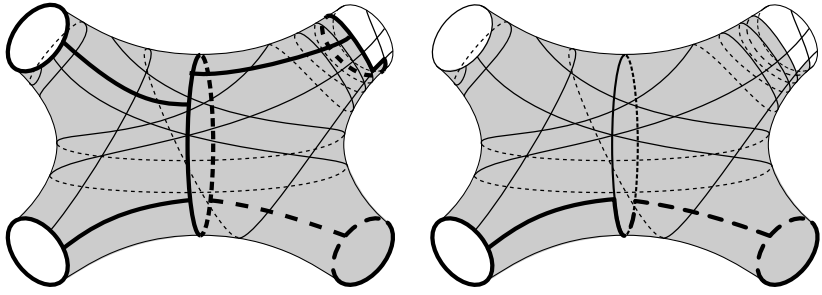}
  \caption{A path $\rho_i$ connects its boundary component $\beta_i$ to $\alpha$, for each $i = 1,2,3,4$. Then, we form the curve $\beta$.}
  \label{fig:PathsToBoundary}
 \end{figure}

 If $N_{n+1}(\alpha)$ has a non-peripheral boundary component $\beta_i$, then we are done: we have found an essential, simple closed curve $\beta_i$ that intersects $\gamma$ at most $12 \sqrt K$ times. So suppose that all boundary components of $N_{n+1}(\alpha)$ are peripheral, as in Figure \ref{fig:PathsToBoundary}.  In this case, $N_{n+1}(\alpha)$ is homeomorphic to $\S$. As we assume that $\alpha$ separates $N_{n+1}(\alpha)$, it must separate $\S$. We only allow this if $\S$ has genus 0.
 
 In this case, we will find an essential, non-peripheral closed curve $\beta$ so that
 \[
  i(\beta, \gamma) \leq 28 \sqrt K + 8 + \frac 12 i(\alpha, \gamma)
 \]
 When $\S$ is genus 0, we assume that $\alpha$ is the essential, non-peripheral closed curve that intersects $\gamma$ the least. So $i(\alpha, \gamma) \leq i(\beta, \gamma)$ implies that
 \[
  i(\alpha, \gamma) \leq 56 \sqrt K + 16
 \]
 
 We build $\beta$ as follows. $\S$ has genus 0 with $b$ boundary components, for $b \geq 4$. So we label the boundary components of $N_{n+1}(\alpha)$ by $\beta_1, \dots, \beta_b$. By Remark \ref{rem:ConnectingRegionToAlpha}, any point $x_i$ on $\beta_i$ can be joined to $\alpha$ by an arc $\rho_i$ with $\# \rho_i \cap \gamma \leq 2n+2$, as in the left-hand side of Figure \ref{fig:PathsToBoundary}. Since $n \leq \frac 12 \sqrt K$, we have
 \[
  \# \rho_i \cap \gamma \leq \sqrt K +2
 \]
Let $y_i$ be the endpoint of $\rho_i$ that lies on $\alpha$. Since $b \geq 4$, there are at least 4 such points. So without loss of generality, $y_1$ and $y_2$ can be joined by a subarc $\alpha' \subset \alpha$ so that
\[
  \# \alpha' \cap \gamma \leq \frac 14 i(\alpha, \gamma)
\]

 The arc $\rho = \rho_1 \circ \alpha' \circ \rho_2$ joins $\beta_1$ to $\beta_2$, as in the right-hand side of Figure \ref{fig:PathsToBoundary}. By the above,
 \[
  \# \rho \cap \gamma \leq 2 \sqrt K + 4 + \frac 14 i(\alpha, \gamma)
 \]
Note that it must be a simple arc, because if, for example, $\rho_1$ and $\rho_2$ intersect, we can do surgery on one of them so that it goes through strictly fewer regions. 

We can then form the simple closed curve 
 \[
  \beta = \rho \circ \beta_1 \circ \rho^{-1} \circ \beta_2
 \]
 (See the right-hand side of Figure \ref{fig:PathsToBoundary}.) We know that $i(\beta_i, \gamma) \leq 12 \sqrt K$ for each $i = 1, \dots, b$. So,
 \[
  i(\beta, \gamma) \leq 28 \sqrt K + 8 + \frac 12 i(\alpha, \gamma)
 \]
 Because $\beta_1$ and $\beta_2$ are homotopic to distinct boundary components of $\S$, the curve $\beta$ is essential and non-peripheral. So by the argument above, $i(\alpha, \gamma) \leq 56 \sqrt K + 16$.
 
 \textbf{Step 4.} In each case above, we found an essential, simple closed curve $\beta$ that crosses $\gamma$ at most $56 \sqrt K + 16$ times. As $K \geq 1$, this means $i(\gamma, \beta) \leq 57 \sqrt K$.

 \textbf{Step 5:}
 Now take our original surface $\S$ and a single non-simple closed geodesic $\gamma$ with at most $K$ self-intersections. Then there is a simple, essential, non-peripheral closed curve $\beta_1$ on $\S$ with
 \[
   i(\gamma, \beta_1) \leq 57 \sqrt K
 \]
 
 Now suppose we have distinct simple closed curves $\beta_1, \dots, \beta_i$ with $i(\beta_i, \gamma) = I_i$. Cutting $\gamma$ along $\beta_1, \dots, \beta_i$ gives us a multiarc $\gamma_i$ composed of 
 \[
  T_i = I_1 + \dots + I_i
 \]
 arcs. As $I_1 = 57 \sqrt K$, we see that $T_i \geq \sqrt K$. So we can use the argument in Steps 1-4 to find a curve $\beta_{i+1}$ with
 \[
  i(\gamma, \beta_{i+1}) \leq 57 T_i
 \]
 since we can use $(T_i)^2$ as the new upper bound for $i(\gamma_i, \gamma_i)$. By induction, we see that 
 \[
  T_i = 57(1+57)^{i-1} \sqrt K
 \]
 We continue finding curves until we get a pants decomposition $\Pi = \{\beta_1, \dots, \beta_m\}$. Note that $i(\gamma,\Pi) \leq T_m$. Thus, we can simplify the above bound to get 
 \[
  i(\gamma, \Pi) \leq 58^m \sqrt K
 \]

 \end{proof}
 
 \subsection{Total length of bridging boundary subwords}
 \label{sec:BridgingTotalLength}
 From now on, we will assume that $\Pi = \{\beta_1, \dots, \beta_m\}$ is a pants decomposition so that $i(\gamma, \Pi) \leq c_\S \sqrt K$ for all $i$. Let $\tau_i$ be the Dehn twist about $\beta_i$.
 
 We show the following:
 \begin{lem}
 \label{lem:BridgingTotalLength}
  There is some product $f = \tau_1^{n_1} \dots \tau_m^{n_m}$ of Dehn twists so that if $B$ is the set of bridging boundary subarcs of the pair $(f \cdot \gamma, \Pi)$ then
  \[
   \sum_{\gamma_i \in B} |b_i| \lesssim K
  \]
 where the constant depends only on $\S$.
 \end{lem}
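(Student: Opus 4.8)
The plan is to follow the four-claim outline sketched in Step~2 of the idea of proof. To each bridging boundary subarc $\gamma_i$ that bridges a curve $\beta_k \in \Pi$ I would attach a signed integer $\Delta_i$, the \emph{twisting number}, defined in the universal cover: take the lift $\tilde p(\gamma)$ of the path associated to $w_\Pi(\gamma)$ and a lift $\tilde\beta_k$ of $\beta_k$ that the relevant lift $\tilde b_i$ of $b_i$ runs along, and let $\Delta_i$ be the signed count of full hexagon edges of $\tilde\beta_k$ traversed by $\tilde p(\gamma)$ inside $N_1(\tilde\beta_k)$; this is arranged so that $|b_i|$ and $|\Delta_i|$ are coarsely comparable for bridging subwords. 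The point of the annular neighborhood is that a Dehn twist $\tau_k^{n}$ about $\beta_k$ is supported in an annulus disjoint from every $\beta_j$ with $j\neq k$, so it preserves $i(\cdot,\beta_j)$ for all $j$, hence by Proposition~\ref{prop:BridgingIsCrossing} it preserves $\#B$ and the partition $B=\bigsqcup_k B_k$, it shifts every $\Delta_i$ with $\gamma_i\in B_k$ by the \emph{same} integer $n$, and it leaves the $\Delta_j$ with $\gamma_j\notin B_k$ coarsely unchanged. Since disjoint Dehn twists commute, the twists about the different $\beta_k$ can be chosen independently.

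The next step (this is Claim~\ref{cla:BridgingDifference}) is the geometric estimate: if $\gamma_i,\gamma_j$ both bridge $\beta_k$, then $|\Delta_i-\Delta_j| \lesssim i(\gamma_i,\gamma_j)+1$, because two arcs running through a common coarse annulus whose twisting numbers differ by $d$ must cross roughly $d$ times. Summing over unordered pairs inside each $B_k$ and then over $k$, and using Lemma~\ref{lem:OverlapBound} (so that $\sum_k\sum_{\gamma_i,\gamma_j\in B_k} i(\gamma_i,\gamma_j)\le \sum_{i,j}i(\gamma_i,\gamma_j)\le 25\,i(\gamma,\gamma)$) together with $\sum_k(\#B_k)^2\le(\#B)^2\le c_\S^2 K$ coming from Proposition~\ref{prop:NicePants} and Proposition~\ref{prop:BridgingIsCrossing}, I obtain
\[
 \sum_{k}\ \sum_{\gamma_i,\gamma_j\in B_k} |\Delta_i-\Delta_j| \lesssim i(\gamma,\gamma)+K \lesssim K.
\]

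Now (Claim~\ref{cla:ApplyDehnTwists}) for each $k$ fix one subarc $\gamma_{i_0(k)}\in B_k$ and pick $n_k\in\Z$ so that after applying $f=\tau_1^{n_1}\cdots\tau_m^{n_m}$ the twisting number of $\gamma_{i_0(k)}$ has absolute value at most $1$. Writing $\Delta_i'$ for the twisting numbers of $f\cdot\gamma$, the single ``row'' $\sum_{\gamma_i\in B_k}|\Delta_i'-\Delta_{i_0(k)}'|$ is dominated by the full pairwise sum over $B_k$, so $\sum_{\gamma_i\in B_k}|\Delta_i'|\le \sum_{\gamma_i,\gamma_j\in B_k}|\Delta_i'-\Delta_j'| + \#B_k$; summing over $k$ and using $\#B\lesssim\sqrt K\le K$ gives $\sum_{\gamma_i\in B}|\Delta_i'|\lesssim K$. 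Finally (Claim~\ref{cla:BridgingTotalLength}), since $|b_i'|\lesssim|\Delta_i'|+1$ for each bridging boundary subword of $w_\Pi(f\cdot\gamma)$, we conclude $\sum_{\gamma_i\in B}|b_i'|\lesssim\sum_{\gamma_i\in B}|\Delta_i'|+\#B\lesssim K+\sqrt K\lesssim K$, which is the assertion of the lemma with $f$ the above product of Dehn twists.

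I expect the main obstacle to be Claim~\ref{cla:BridgingDifference} — the comparison $|\Delta_i-\Delta_j|\lesssim i(\gamma_i,\gamma_j)$ — together with setting up a definition of $\Delta_i$ that is simultaneously canonical enough that Dehn twists act by a clean shift, coarsely equal to $|b_i|$, and coarsely a lower bound for pairwise intersections. The reference region is only coarsely an annulus: the hexagon neighborhoods of $\tilde\beta_k$ are genuine hyperbolic polygons, the two subarcs need not enter and leave on matching hexagon edges, and the spiralling is recorded combinatorially rather than geometrically, so the bounded error terms must be controlled uniformly in $K$ in order to survive being summed over the up-to-$c_\S^2 K$ pairs. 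By comparison, the bookkeeping around how the Dehn twists about distinct $\beta_k$ interact is routine once the disjointness-of-supports observations above are recorded.
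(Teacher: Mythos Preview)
Your outline is correct and matches the paper's architecture: define a signed twisting parameter $\Delta_i$ for each bridging subarc, bound the pairwise differences $\sum_{\gamma_i,\gamma_j\in B_k}|\Delta_i-\Delta_j|$ by $K$ via Lemma~\ref{lem:OverlapBound}, apply Dehn twists about the $\beta_k$, and finish using $|b_i|\le|\Delta_i|+1$.

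The one genuine difference is in how you pass from the pairwise bound to $\sum_{B}|\Delta_i'|\lesssim K$. The paper (Claim~\ref{cla:ApplyDehnTwists}) twists so that the \emph{total} signed twisting $\sum_{B_k}\Delta_i'$ lies in $[0,4c_\S\sqrt K]$, and then (Claim~\ref{cla:BridgingTotalLength}) interprets $\sum_{i<j}|\Delta_i'-\Delta_j'|$ as the area of a convex polygon, inscribes a triangle, and applies Heron's formula to extract $\sum|\Delta_i'|$. Your route is more elementary: you twist so that a single chosen $\Delta_{i_0(k)}'$ has $|\Delta_{i_0(k)}'|\le 1$, and then the triangle inequality gives $\sum_{B_k}|\Delta_i'|\le\sum_{B_k}|\Delta_i'-\Delta_{i_0(k)}'|+\#B_k$, with the first term dominated by the (twist-invariant) full pairwise sum. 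This is a cleaner argument and avoids the polygon/Heron machinery entirely; what the paper's version buys is only a slightly different normalization of the twist, not any additional information needed later. Your concern that Claim~\ref{cla:BridgingDifference} is the hard step is well placed, and the paper's proof of that claim is exactly the hexagon-counting argument you sketch.
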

 Note that if $f$ is any product of Dehn twists about curves in $\Pi$, then $i(f \cdot \gamma, \Pi) \leq c_\S \sqrt K$ still holds. That is, $\Pi$ is the ``right'' pants decomposition for $\gamma$ if and only if it is the ``right'' pants decomposition for $f \cdot \gamma$. So Proposition \ref{prop:NicePants} and Lemma \ref{lem:BridgingTotalLength} together imply Lemma \ref{lem:BridgingBdrySubwordIntBound}.

 \begin{proof}

 Lift the hexagon decomposition of $\S$ to a hexagon decomposition of its universal cover $\tilde \S$. Let $\beta_l \in \Pi$. Choose a lift $\tilde \beta_l$ of $\beta_l$ to $\tilde \S$. Number the hexagons on either side of $\tilde \beta_l$ (Figure \ref{fig:HexagonOrder}). Let $\{h_i\}_{i \in \Z}$ be the hexagons on one side and $\{\chi_i\}_{i \in \Z}$ be the hexagons on the other side, so that $h_i$ and $\chi_i$ share an edge, and $h_i$ is adjacent to $h_{i-1}$ and $h_{i+1}$.
\begin{figure}[h!]
 \centering
 \includegraphics{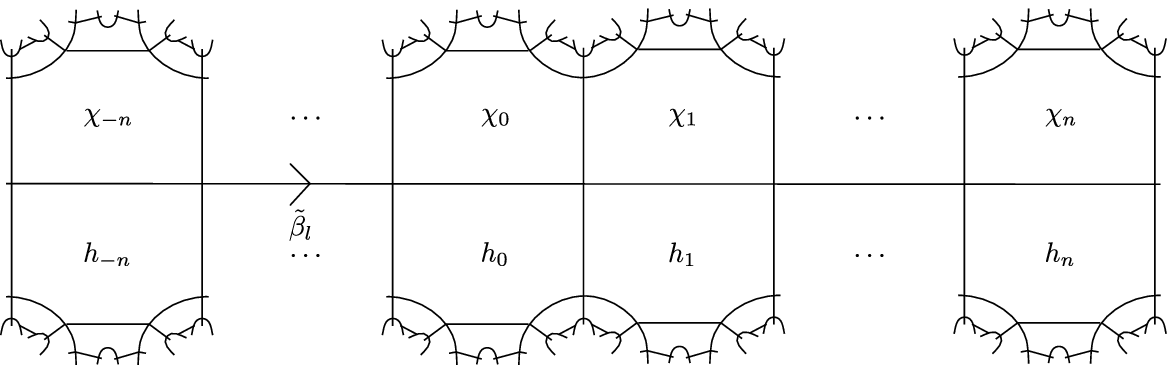}
 \caption{ }
 \label{fig:HexagonOrder}
\end{figure}

Suppose $\tilde \gamma$ is a lift of $\gamma$ that intersects $\tilde \beta_l$. Then there is a boundary subarc $\gamma_i$ with a lift
\[
 \tilde \gamma_i = \tilde \gamma \cap N_k(\tilde \beta_l)
\]
for $k = 1$ or 2. Suppose $\tilde \gamma_i$ enters $N_1(\tilde \beta_l)$ at $h_{n_i}$ and exits at $\chi_{m_i}$.  Let 
\[
 \Delta_i = m_i - n_i 
\]
We will call $\Delta_i$ the \textbf{twisting parameter} of $\gamma_i$, since it is related to the number of times $\gamma_i$ twists about $\beta_l$, and the direction it twists in.

Suppose $\gamma_j$ is another boundary subarc that bridges $\beta_l$. Then there is another lift $\tilde \gamma'$ of $\gamma$ so that
\[
 \tilde \gamma_j = \tilde \gamma' \cap N_k(\beta_l)
\]
is a lift of $\gamma_j$. Then, up to reversing orientation, $\tilde \gamma_j$ enters $N_1(\tilde \beta_j)$ at $h_{n_j}$ and exits at $\chi_{m_j}$ and has twisting parameter $\Delta_j$ (Figure \ref{fig:BridgingIntersections}.)
\begin{cla}
\label{cla:BridgingDifference}
Suppose $i(\gamma, \Pi) \leq c_\S \sqrt K$. If $B_l$ is the set of boundary subarcs of the pair $(\gamma, \Pi)$ that bridge $\beta_l \in \Pi$ for each $l$, then
 \[
 \sum_{l=1}^m \sum_{\gamma_i, \gamma_j \in B_l} |\Delta_i - \Delta_j| \lesssim K
\]
where the constant depends only on the topology of $\S$, and $\Delta_i$ is the twisting parameter of $\gamma_i$.
\end{cla}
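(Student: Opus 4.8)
The plan is to turn the heuristic ``$i(\gamma_i,\gamma_j)\approx|\Delta_i-\Delta_j|$'' sketched in the introduction into a genuine estimate, valid up to an additive constant per pair, and then to sum it: the $i(\gamma_i,\gamma_j)$ terms are controlled by the overlap bound (Lemma~\ref{lem:OverlapBound}) and hence by $i(\gamma,\gamma)\le K$, while the additive errors are controlled precisely by the hypothesis $i(\gamma,\Pi)\lesssim\sqrt K$ via Proposition~\ref{prop:BridgingIsCrossing}.

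\textbf{Step 1: the local estimate.} I would first prove that for $\gamma_i,\gamma_j\in B_l$,
\[
 |\Delta_i-\Delta_j|\;\lesssim\;i(\gamma_i,\gamma_j)+1,
\]
with the implicit constant depending only on the topology of $\S$. To see this I would pass to the annular cover $A_l\to\S$ associated to $\langle\beta_l\rangle$ (equivalently, work inside $N_k(\tilde\beta_l)\subset\tilde\S$ and quotient by the translation along $\tilde\beta_l$, i.e.\ compare $\tilde\gamma_i$ with the $\langle\beta_l\rangle$-orbit of $\tilde\gamma_j$, since a single pair of lifts in $\tilde\S$ meets at most once). Normalizing the orientations of the bridging subarcs so that all of them cross $\tilde\beta_l$ in the same direction — this is the ``up to reversing orientation'' convention in the statement, and it is what makes the answer $|\Delta_i-\Delta_j|$ rather than $|\Delta_i+\Delta_j|$ — the subarc $\gamma_i$ lifts to an arc $\hat\gamma_i$ in $A_l$ running from the $\{h_\bullet\}$ side to the $\{\chi_\bullet\}$ side and winding around the core a number of times that equals $\Delta_i=m_i-n_i$ up to a bounded additive error (each hexagon adjacent to $\tilde\beta_l$ subtends a definite fraction of a full turn, and the number of hexagons meeting $\beta_l$ is bounded in terms of $\S$). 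Two such arcs crossing the annulus with winding numbers differing by $d$ must intersect at least $d-1$ times, and distinct points of $\hat\gamma_i\cap\hat\gamma_j$ project to distinct points of $\gamma_i\cap\gamma_j$ on $\S$ (two that agreed downstairs would differ by a power of $\beta_l$, forcing $\hat\gamma_j$ to meet one of its own $\langle\beta_l\rangle$-translates, impossible for an embedded arc). This gives the displayed inequality.

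\textbf{Step 2: summing.} Using Step~1, the fact that each bridging subarc bridges exactly one curve of $\Pi$ (so the $B_l$ are disjoint and a pair contributes to at most one inner sum), and then Lemma~\ref{lem:OverlapBound},
\[
 \sum_{l=1}^m\sum_{\gamma_i,\gamma_j\in B_l}|\Delta_i-\Delta_j|
 \;\lesssim\;\sum_{l=1}^m\sum_{\gamma_i,\gamma_j\in B_l}\bigl(i(\gamma_i,\gamma_j)+1\bigr)
 \;\lesssim\;\sum_{i,j=1}^{n}i(\gamma_i,\gamma_j)+\sum_{l=1}^m(\#B_l)^2
 \;\lesssim\;i(\gamma,\gamma)+\sum_{l=1}^m(\#B_l)^2.
\]
Since $i(\gamma,\gamma)\le K$, it remains to bound the error term. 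By Proposition~\ref{prop:BridgingIsCrossing}, $\sum_l\#B_l=\#B=i(\gamma,\Pi)$, and by hypothesis $i(\gamma,\Pi)\le c_\S\sqrt K$, so
\[
 \sum_{l=1}^m(\#B_l)^2\;\le\;\Bigl(\sum_{l=1}^m\#B_l\Bigr)^2\;=\;i(\gamma,\Pi)^2\;\le\;c_\S^2\,K.
\]
Combining the two displays gives $\sum_{l}\sum_{\gamma_i,\gamma_j\in B_l}|\Delta_i-\Delta_j|\lesssim K$, with the constant depending only on the topology of $\S$.

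\textbf{Expected main obstacle.} Step~1 is the crux: the delicate points are the orientation bookkeeping (ensuring the estimate genuinely involves $|\Delta_i-\Delta_j|$), the comparison ``winding number of $\hat\gamma_i$ $\approx\Delta_i$'' which must be made uniform using that the hexagon count around $\beta_l$ is bounded in terms of $\S$, and the descent claim that intersections in $A_l$ yield distinct intersections of the subarcs on $\S$. The additive $+1$ per pair is unavoidable (two arcs whose windings differ by $1$ may be disjoint), and it is exactly this that forces the hypothesis $i(\gamma,\Pi)\lesssim\sqrt K$ into the statement, since otherwise the error term $\sum_l(\#B_l)^2$ could be as large as $(\#B)^2$ with $\#B$ uncontrolled.
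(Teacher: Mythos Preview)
Your proposal is correct and follows essentially the same route as the paper. The paper obtains the local estimate $i(\gamma_i,\gamma_j)\ge\frac12|\Delta_i-\Delta_j|-1$ by counting, in $\tilde\S$, the powers $k$ for which $f_l^k(\tilde\gamma_i)$ meets $\tilde\gamma_j$ (your annular-cover/winding-number phrasing is the same computation), and then sums exactly as you do, invoking Lemma~\ref{lem:OverlapBound} for the intersection term and the hypothesis $\sum_l\#B_l\le c_\S\sqrt K$ for the additive error to arrive at $(50+2c_\S^2)K$.
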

\begin{proof}

Both $\tilde \gamma_i$ and $\tilde \gamma_j$ cut $N_1(\tilde \beta_l)$ into two pieces. For example, consider the two components of $N_1(\tilde \beta_l) \setminus \gamma_i$. Whenever $n > n_i$ and $m < m_i$, the edges of $\partial N_1(\tilde \beta_l)$ adjacent to hexagon $h_n$ are in a different component than the edges adjacent to hexagon $\chi_m$. In fact, if 
\begin{align*}
 n_i < n_j & \mbox{ and } m_j < m_i \mbox{ or }  \\
 n_j < n_i & \mbox{ and } m_i < m_j
\end{align*}
we have that
\[
 \# \tilde \gamma_i \cap \tilde \gamma_j = 1
\]
(See Figure \ref{fig:BridgingIntersections}.)
\begin{figure}[h!]
 \centering 
 \includegraphics{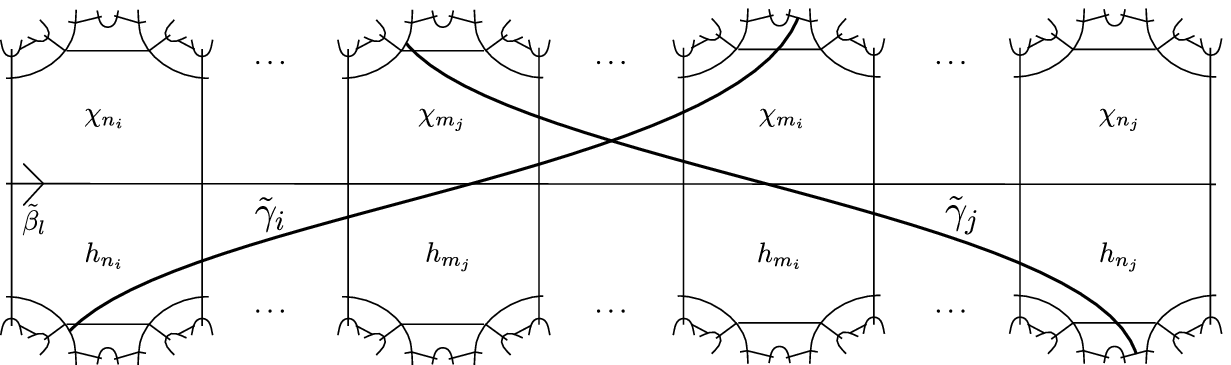}
 \caption{}
 \label{fig:BridgingIntersections}
\end{figure}

Let $f_l$ be the deck transformation that acts by translation along $\tilde \beta_l$ with translation length $l(\beta_l)$. We will use $f_l$ to slide $\tilde \gamma_i$ around and create intersections with $\tilde \gamma_j$.

Up to replacing $f_l$ with $f_l^{-1}$, we have $f_l(h_n) = h_{n+2}$ and $f_l(\chi_n) = \chi_{n+2}$. So $f^k_l(\tilde \gamma_i)$ has endpoints in hexagons $h_{n_i + 2k}$ and $\chi_{m_i + 2k}$. Thus, whenever
\begin{align*}
 n_{i+2k} < n_j & \mbox{ and } m_j < m_{i+2k} \mbox{ or }  \\
 n_j < n_{i+2k} & \mbox{ and } m_{i+2k} < m_j
\end{align*}
we have that
\[
 \# f^k_l (\tilde \gamma_i) \cap \tilde \gamma_j = 1
\]

Recall that we defined the twisting parameter
\[
 \Delta_i = m_i -n_i
\]
for each $i$. Then we can count the number of powers $k$ that result in an intersection:
\[
 \#\{ k \in \Z \ | \ \# f^k_l (\tilde \gamma_i) \cap \tilde \gamma_j = 1 \} \geq \frac 12 |\Delta_i - \Delta_j| - 1
\]

Project all intersections between shifts $f^k_l(\tilde \gamma_i)$ and $\tilde \gamma_j$ down to $\gamma$. Note that they must all project down to distinct self-intersection points of $\gamma$. Thus, 
\[
 i(\gamma_i,\gamma_j) \geq \frac 12 |\Delta_i - \Delta_j| - 1
\]
for any $\gamma_i$ and $\gamma_j$ that bridge $\beta_l$.
By Lemma \ref{lem:OverlapBound}, this implies that
\[
 \sum_{l = 1}^m \sum_{\gamma_i, \gamma_j \in B_l} (|\Delta_i - \Delta_j| - 2) \leq 50 i(\gamma, \gamma)
\]
where $B_l$ is the set of all boundary subarcs of the pair $(\gamma, \Pi)$ that bridge $\beta_l$. We chose $\Pi$ so that $\sum \# B_l \leq c_\S \sqrt K$ for each $l$. Since $i(\gamma,\gamma) \leq K$, this implies
\[
 \sum_{l=1}^m \sum_{\gamma_i, \gamma_j \in B_l} |\Delta_i - \Delta_j| \leq (50 + 2(c_\S)^2) K
\]
\end{proof}

Whenever $\gamma$ satisfies
\begin{equation}
 \label{eq:DeltaBalanced}
 0 \leq \sum \Delta_i  \leq 4 c_\S \sqrt K
\end{equation}
we get the following claim.

\begin{cla}
\label{cla:BridgingTotalLength}
 Suppose $i(\gamma, \Pi) \leq c_\S \sqrt K$. If $\gamma$ also satisfies (\ref{eq:DeltaBalanced}), then 
 \[
  \sum_{\gamma_i \in B} |b_i| \lesssim K
 \]
 where $B$ is the set of bridging boundary subarcs of the pair $(\gamma, \Pi)$, $\gamma_i = \alpha(b_i)$ for each $i$, and the constant depends only on $\S$.
\end{cla}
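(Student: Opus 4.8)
The plan is to reduce the assertion about the word lengths $|b_i|$ to the corresponding assertion about the twisting parameters $\Delta_i$, and then to bound $\sum_{\gamma_i \in B}|\Delta_i|$ using Claim \ref{cla:BridgingDifference} together with the balance hypothesis (\ref{eq:DeltaBalanced}). The reduction rests on the geometric fact anticipated in the idea of proof (``if $|b_i| = 2m$ then $\gamma_i$ twists roughly $m$ times''): for a bridging boundary subword $b_i$ on the curve $\beta_l$ it bridges, each full twist of the path $p(\gamma)$ around $\beta_l$ costs a bounded number of boundary edges, and the portion of $b_i$ coming from entering and leaving the neighborhood of $\beta_l$ is also bounded. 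So there is a constant $c'$ depending only on $\S$ with $|b_i| \le |\Delta_i| + c'$ for every bridging $b_i$. Granting this, since $\#B = i(\gamma,\Pi) \le c_\S\sqrt K$ by Proposition \ref{prop:BridgingIsCrossing} and the hypothesis of the claim, we get
\[
 \sum_{\gamma_i \in B}|b_i| \le \sum_{\gamma_i \in B}|\Delta_i| + c'\,\#B \le \sum_{\gamma_i \in B}|\Delta_i| + c' c_\S \sqrt K,
\]
so it suffices to prove $\sum_{\gamma_i \in B}|\Delta_i| \lesssim K$.

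To bound $\sum_{\gamma_i \in B}|\Delta_i|$ I would argue one curve $\beta_l \in \Pi$ at a time and then sum over the finitely many (at most $m = m(\S)$) curves of $\Pi$. Fix $\beta_l$, write $B_l = \{\gamma_1,\dots,\gamma_N\}$ with twisting parameters $\Delta_1,\dots,\Delta_N$, and let $M$ be the median of $\Delta_1,\dots,\Delta_N$. Since the median minimizes $t \mapsto \sum_i |\Delta_i - t|$, for each fixed $j$ we have $\sum_i |\Delta_i - M| \le \sum_i |\Delta_i - \Delta_j|$; averaging over $j$ gives $\sum_i |\Delta_i - M| \le \tfrac1N \sum_{\gamma_i,\gamma_j \in B_l}|\Delta_i - \Delta_j|$. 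On the other hand, writing $NM = \sum_i \Delta_i - \sum_i(\Delta_i - M)$ and using (\ref{eq:DeltaBalanced}),
\[
 N|M| \le \Big|\sum_{\gamma_i \in B_l}\Delta_i\Big| + \sum_i |\Delta_i - M| \le 4 c_\S\sqrt K + \frac1N \sum_{\gamma_i,\gamma_j \in B_l}|\Delta_i - \Delta_j|.
\]
Combining with the triangle inequality $\sum_i |\Delta_i| \le \sum_i|\Delta_i - M| + N|M|$ yields
\[
 \sum_{\gamma_i \in B_l}|\Delta_i| \le \frac2N \sum_{\gamma_i,\gamma_j \in B_l}|\Delta_i - \Delta_j| + 4 c_\S\sqrt K.
\]

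Summing this over $\beta_l \in \Pi$ and using $N = \#B_l \ge 1$, the first term is at most $2\sum_l \sum_{\gamma_i,\gamma_j \in B_l}|\Delta_i - \Delta_j| \lesssim K$ by Claim \ref{cla:BridgingDifference}, and the second contributes at most $4 m c_\S\sqrt K \lesssim \sqrt K \le K$ (here $\gamma$ is non-simple, so $K \ge 1$). Hence $\sum_{\gamma_i \in B}|\Delta_i| \lesssim K$, and the reduction of the first paragraph then gives $\sum_{\gamma_i \in B}|b_i| \lesssim K$, as claimed. The main obstacle is the very first, geometric, step: making the comparison $|b_i| \le |\Delta_i| + O(1)$ precise requires a careful description of how $p(\gamma)$ runs back and forth along the (two) boundary edges lying on $\beta_l$ as $\tilde\gamma_i$ spirals through the one- or two-hexagon neighborhood of $\tilde\beta_l$, together with a bound on the turnaround contribution where $\tilde\gamma_i$ enters and exits that neighborhood; once this is in hand, the rest is the elementary median bookkeeping above plus the two cited claims.
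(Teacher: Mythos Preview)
Your argument is correct, and for the key analytic step it is genuinely different from the paper's. The reduction $|b_i| \le |\Delta_i| + O(1)$ is exactly what the paper does too: there it is proved precisely as $|b_i| \le |\Delta_i| + 1$ (equation~(\ref{eq:biLessThanDelta})), by observing that each edge of $\tilde b_i$ borders two hexagons adjacent to $\tilde\beta_l$, at least one of which $\tilde\gamma$ passes through, so $|b_i|$ cannot exceed the number of hexagons traversed by $\tilde\gamma_i$ in $N_1(\tilde\beta_l)$, which is $|\Delta_i|+1$. So what you flag as the ``main obstacle'' is in fact a one-line hexagon count, and your heuristic is on target.

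Where you diverge is in deducing $\sum_{B_l}|\Delta_i| \lesssim K$ from Claim~\ref{cla:BridgingDifference} and the balance hypothesis. The paper takes a rather elaborate geometric route: it interprets $\sum_{i<j}|\Delta_i-\Delta_j|$ as the area of a convex polygon built from the vectors $(1,\Delta_i)$, inscribes a triangle with one vertex at the lowest point, and bounds its area from below via Heron's formula to extract $\sum|\Delta_i|$. Your median argument accomplishes the same thing in a couple of lines of elementary inequalities and is cleaner; it also makes transparent exactly where the balance condition~(\ref{eq:DeltaBalanced}) enters (only to control $N|M|$). The paper's approach has the minor virtue of being visual and perhaps suggesting generalizations to other convexity arguments, but for the purpose at hand your route is more direct.
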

After we show this claim, we will show how to find a composition of Dehn twists $f \in \Mod_\S$ so that (\ref{eq:DeltaBalanced}) holds for $f \cdot \gamma$.

\begin{proof}
We will first bound $\sum |\Delta_i - \Delta_j|$ from Claim \ref{cla:BridgingDifference} from below by $\sum |\Delta_i|$. Then we will show that $|\Delta_i| \geq |b_i| +1$. Combined with Claim \ref{cla:BridgingDifference}, this will complete the proof.

Let $B_l$ be the set of boundary subarcs of the pair $(\gamma, \Pi)$ that bridge $\beta_l$. Renumber the elements of $B_l$ by $\gamma_1, \dots, \gamma_N$ so that their respective twisting parameters satisfy $\Delta_1 \leq \Delta_2 \leq \dots \leq \Delta_N$. Consider the set of vectors $\{v_i = (1,\Delta_i)\} \subset \R^2$.  Form a convex polygon $P$ with vertices at $w_0 = \mathbf 0$, and $w_i = v_1 + \dots + v_i$ for $i = 1, \dots, N$.
(Figure \ref{fig:Polygon}.)
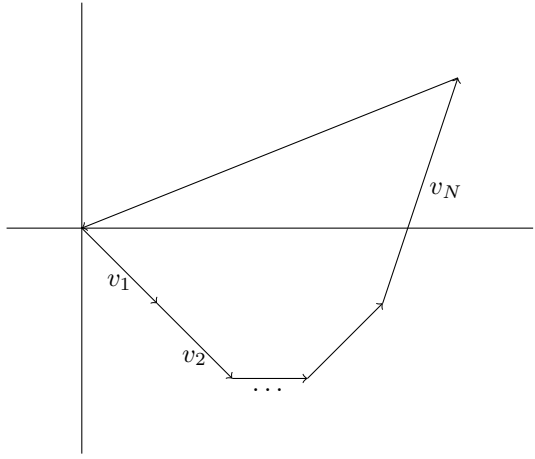
\begin{figure}[h!]
\centering 
 \begin{tikzpicture}
  \draw (-1,0) -- (6,0) (0,-3) -- (0,3);
  \draw[->] (0,0) -- (1,-1);
  \draw[->] (1,-1)-- (2,-2);
  \draw[->] (2,-2) -- (3,-2);
  \draw[->] (3,-2) --(4,-1) ;
  \draw[->] (4,-1) -- (5,2);
  \draw[->] (5,2) -- (0,0);
  
  \path(.5,-.5) node[left ,below]{$v_1$};
  \path(1.5,-1.5) node[left, below]{$v_2$};
  \path(2.5, -2) node[below]{$\dots$};
  \path(4.5, .5)node[right]{$v_N$};
 \end{tikzpicture}
\caption{The polygon $P$. The slopes of its sides are the twisting parameters of $\gamma$ about $\beta_l$.}
\label{fig:Polygon}
\end{figure}

   The area of $P$ is exactly $\sum_{i< j} |\Delta_i - \Delta_j|$. To see this, consider the triangle with vertices at $w_i, w_{i+1}$ and $w_N$.

Two sides of this triangle are given by the vectors $v_i = (1,\Delta_i)$ and $v_{i+1} + \dots + v_N = (N-i, \Delta_{i+1} + \dots + \Delta_N)$, respectively. Thus, its area is given by absolute value of the determinant
   \[
    \begin{vmatrix}
     1 & N - i \\
     \Delta_i & \Delta_{i+1} + \dots + \Delta_N
    \end{vmatrix}
   = \sum_{j = i}^N (\Delta_j - \Delta_i)
   \]
   These triangles are disjoint for all $i$, so the sum of their areas gives the area of $P$. Thus, 
   \[
    area(P) = \sum_{i \leq j} \Delta_j - \Delta_i
   \]
   where we do not need absolute value signs because $\Delta_i \leq \Delta_j$ for $j \geq i$.
   
   We would like to thank Ser-Wei Fu for introducing us to the above technique. Specifically, he showed us how to write a sum of differences of twisting numbers as the area of a polygon, like we do above.
   
    We will bound $area(P)$ from below by a multiple of $\sum |\Delta_i|$.
  Let $T$ be the triangle with vertices $0, a$ and $b$ where $a$ and $b$ are defined as follows:  Let $a = w_N$ be the right-most vertex of $P$, and let $b$ be a point on $P$ with the least $y$-coordinate. As $T$ is contained inside $P$, we will, in fact, bound $area(T)$ from below in terms of $\sum |\Delta_i|$.

  \begin{figure}[h!]
\centering 
 \begin{tikzpicture}
  \draw (-1,0) -- (6,0) (0,-3) -- (0,3);
  \draw[->] (0,0) -- (1,-1);
  \draw[->] (1,-1)-- (2,-2);
  \draw[->] (2,-2) -- (3,-2);
  \draw[->] (3,-2) --(4,-1) ;
  \draw[->] (4,-1) -- (5,2);
  \draw[->] (5,2) -- (0,0);
  
 \draw[draw=black, fill=gray, fill opacity=0.2] (0,0) -- (2.7,-2) -- (5,2) -- (0,0);

 \path (-.1,0) node[left,above]{$0$};
 \path (2.7,-2) node[below]{$b$};
 \path (5,2) node[right,above]{$a$};
 \filldraw (2.7,-2) circle(.05);
 \filldraw (0,0) circle(.05);
 \filldraw (5,2) circle(.05);
 
 \end{tikzpicture}
\caption{The triangle $T$ inside $P$.}
\label{fig:PolygonHeights}
\end{figure}
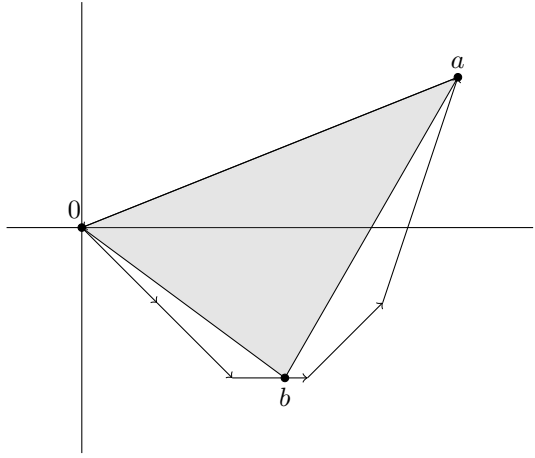
 
We will use Heron's formula to estimate the area of $T$. Heron's formula says
\[
 (area(T))^2 = s (s-l_1)(s-l_2)(s-l_3)
\]
where $l_1, l_2,$ and $l_3$ are the side lengths of $T$, and $s = \frac 12(l_1+l_2+l_3)$. Let 
\[
 l_1 = |0a|, l_2 = |0b| \text{ and } l_3 = |ab|
\]

By the triangle inequality, all four terms in Heron's formula are positive. So we will use that $(area(T))^2 \geq s (s-l_1)$. Thus, we can bound $area(T)$ from below if we bound $s$ from below and $l_1$ from above.

So we need the following bounds on $l_1, l_2$ and $l_3$. Because $a = (N, \sum \Delta_i)$, we have that $N \leq l_1 \leq N + \sum \Delta_i$. Since $1 \leq N \leq c_\S \sqrt K$, and $\sum \Delta_i \leq 4 c_\S \sqrt K$, we have
\[
 1 \leq l_1 \leq 5c_\S \sqrt K
\]

Next, note that both $\overline{0b}$ and $\overline{ab}$ join $b$ to a point with non-negative $y$-coordinate. So if $b = (b_1, b_2)$, then
\[
 l_2 \geq |b_2| \text{ and } l_3 \geq |b_2|
\]
So we will bound $l_2$ and $l_3$ from below if we can bound $|b_2|$ from below. Since $b$ is the lowest point on $P$, its $y$-coordinate must be
 \[
   b_2 = \sum_{\Delta_i < 0} \Delta_i
 \]
 As $0 \leq \sum \Delta_i  \leq 4 c_\S \sqrt K$, we have that 
 \[
   0 \leq  \sum_{\Delta_i > 0} |\Delta_i| - \sum_{\Delta_i < 0} |\Delta_i| \leq 4 c_\S \sqrt K
 \]
 By adding $2 \sum_{\Delta_i < 0} |\Delta_i|$ to both sides and rearranging the resulting inequality, we get
 \[
   |b_2| \geq \frac 12 \sum_{i=1}^N |\Delta_i| - 2 c_\S \sqrt K
 \]
Therefore, we get the inequalities
\[
 l_2 \geq \frac 12 \sum_{i=1}^N |\Delta_i| - 2 c_\S \sqrt K
\]
and
\[
 l_3 \geq \frac 12 \sum_{i=1}^N |\Delta_i| - 2 c_\S \sqrt K
\]

Now we can estimate $s$ and $s - l_1$ as follows:
\[
 s \geq \frac 12 \sum_{i=1}^N |\Delta_i| - 2 c_\S \sqrt K
\]
and
\[
 s -  l_1 \geq \frac 12 \sum_{i=1}^N |\Delta_i| - 7 c_\S \sqrt K
\]
where we ignore the contribution of $+1$ to $s$ from $l_1$ to make our computations cleaner. 

Clearing out the fraction and applying Heron's formula, this gives us that
\begin{align*}
 4 \cdot area(T) & \geq (\sum_{i=1}^N |\Delta_i| - 4 c_\S \sqrt K)(\sum_{i=1}^N |\Delta_i| - 14 c_\S \sqrt K)\\
  & \geq (\sum_{i=1}^N |\Delta_i| - 14 c_\S \sqrt K)^2
\end{align*}
Note that without loss of generality, the terms in the product are both positive. If either term is negative, then $\sum_{i=1}^N |\Delta_i| \leq 14 c_\S \sqrt K$. But this implies that $\sum_{i=1}^N |\Delta_i| \lesssim K$, which is what we are trying to show.

So we can bound $\sum |\Delta_i|$ as follows:
\begin{align*}
 \sum_{i=1}^N |\Delta_i| & \leq 2 \cdot area(T) + 14 c_\S \sqrt K \\
  & \leq 2 \cdot area(P) + 14 c_\S \sqrt K \\
  & = 2 \sum_{i \leq j} \Delta_j - \Delta_i + 14 c_\S \sqrt K \\
  & \lesssim K
\end{align*}
 by Claim \ref{cla:BridgingDifference}, where the constant is $100 + 4(c_\S)^2 + 14 c_\S$, since $\sqrt K \leq K$ for all $K \geq 1$.
  
Lastly, we wish to relate the number $|\Delta_i|$ to the length of boundary subword $b_i$. Let $p(\gamma)$ be the path formed by concatenating letters in $w_\Pi(\gamma)$. Lift the homotopy between $\gamma$ and $p(\gamma)$ so that $\gamma$ lifts to $\tilde \gamma$. This gives us a lift $\tilde p(\gamma)$. Then there is a lift $\tilde b_i$ of $b_i$ that lies on $\tilde \beta_l$. Each edge in $\tilde b_i$ lies on the boundary of two hexagons. By construction, $\tilde \gamma$ must pass through at least one of those hexagons. So if $\tilde \gamma$ enters $N_1(\tilde \beta_l)$ at hexagon $h_{n_i}$ and exits at hexagon $\chi_{m_i}$, then $|b_i| \leq |m_i - n_i| + 1$. In other words,
\begin{equation}
\label{eq:biLessThanDelta}
 |b_i| \leq |\Delta_i| + 1
\end{equation}
Note that if $\tilde \gamma_i$ passes through hexagons numbered $n-1, n, n+1$ adjacent to $\tilde \beta_l$, then $\tilde p(\gamma)$ must lie on $\tilde \beta_l$ in hexagon number $n$ (at least). So the curve $\tilde p(\gamma)$ must lie on $\tilde \beta_l$ at least from hexagon $n_i +1$ to hexagon $m_i -1$, as in Figure \ref{fig:BoundBiBelowByDelta}.
  \begin{figure}[h!]
   \centering 
   \includegraphics{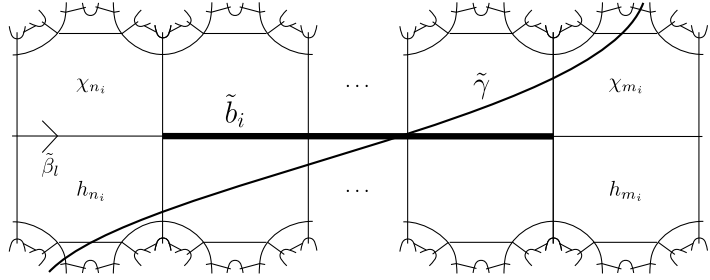}
   \caption{The arc $\tilde b_i$ must contain the arc in bold.}
   \label{fig:BoundBiBelowByDelta}
  \end{figure}
 Combining this observation with Inequality (\ref{eq:biLessThanDelta}), we get
  \begin{equation}
  \label{eq:biBoundedByDelta}
   |b_i| - 1 \leq |\Delta_i| \leq |b_i| + 1
  \end{equation}
   
   Let $B$ be the set of all bridging boundary subarcs, and let $B_l$ be the subarcs of $\gamma$ that bridge $\beta_l$. Then
   \begin{align*}
    \sum_{\gamma_i \in B} |b_i| & \leq \sum_{l=1}^m \sum_{\gamma_i \in   B_l} (|\Delta_i| + 1)\\
    & \leq  \sum_{l=1}^m \sum_{\gamma_i \in   B_l} |  \Delta_i| + c_\S \sqrt K\\
     & \lesssim K
   \end{align*}
  Note that the second inequality uses the fact that $\sum_{l=1}^m \# B_l \leq c_\S \sqrt K$. We can take the multiplicative constant to be $(100 + 29(c_s)^2)m$, since $\sqrt K \leq K$ for all $K \geq 1$ and $c_\S \geq 1$.

  \end{proof}
 
 Now we want to find the element $\gamma' \in \Mod_\S \cdot \gamma$ for which $i(\gamma', \Pi) \lesssim \sqrt K$ and which also satisfies inequality (\ref{eq:DeltaBalanced}). To do this, we will apply Dehn twists to $\gamma$ about curves in $\Pi$ until the result satisfies (\ref{eq:DeltaBalanced}). Recall that $\tau_l$ was the Dehn twist about $\beta_l \in \Pi$ for each $l$.

  \begin{cla}
  \label{cla:ApplyDehnTwists}
   There is a composition of Dehn twists $f = \tau_1^{n_1} \dots \tau_m^{n_m}$ so that the pair $(f \cdot \gamma, \Pi)$ satisfies (\ref{eq:DeltaBalanced}) for each $\beta_l \in \Pi$:
   \[
   0 \leq \sum_{\gamma_i \in B_l} \Delta_i  \leq 4 c_\S \sqrt K
   \]
  where $B_l$ is the set of boundary subarcs of the pair $(f \cdot \gamma, \Pi)$ that bridge $\beta_l$, and $\Delta_i$ is the twisting parameter of $\gamma_i \in B_l$.
  \end{cla}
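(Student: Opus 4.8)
The plan is to build $f$ as a product $\tau_1^{n_1}\cdots\tau_m^{n_m}$ and to choose the exponents one at a time so that the total twisting about each $\beta_l$ is pushed into the window $[0,4c_\S\sqrt K]$. Since the curves of $\Pi$ are pairwise disjoint, the Dehn twists $\tau_1,\dots,\tau_m$ commute, so the order of the product is irrelevant and the whole problem reduces to choosing, for each $l$, a single integer $n_l$. Throughout, write $B_l$ for the set of boundary subarcs of the relevant curve that bridge $\beta_l$, let $N_l=\# B_l$, and let $S_l=\sum_{\gamma_i\in B_l}\Delta_i$; by Proposition \ref{prop:BridgingIsCrossing} we have $N_l = i(\gamma,\beta_l)\le i(\gamma,\Pi)\le c_\S\sqrt K$, and this bound is unaffected by applying Dehn twists about curves of $\Pi$.

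The key mechanism is that a single twist $\tau_l$ acts on twisting parameters in a uniform way: I would show, working in the universal cover with the deck transformation $f_l$ as in the setup before Claim \ref{cla:BridgingDifference}, that $\tau_l$ shifts the twisting parameter $\Delta_i$ of \emph{every} $\gamma_i\in B_l$ by the same fixed nonzero integer $c$ (with $|c|\le 2$ under the present conventions, since going once around $\beta_l$ shifts hexagon labels by two, cf.\ the identity $f_l(h_n)=h_{n+2}$). Consequently $\tau_l^n$ replaces $S_l$ by $S_l + c\,n\,N_l$, so the attainable values of $S_l$ form an arithmetic progression with common difference $|c|\,N_l \le 2 c_\S\sqrt K \le 4 c_\S\sqrt K$. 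A closed interval of length $4c_\S\sqrt K$ therefore contains one of these values; let $n_l$ be a corresponding exponent, taking $n_l=0$ when $N_l=0$. This pins down $0\le S_l + c\,n_l\,N_l \le 4c_\S\sqrt K$.

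To assemble these choices into a single $f$, I would verify that twisting about a curve $\beta_{l'}$ with $l'\ne l$ does not spoil the balance obtained at $\beta_l$: because $\beta_{l'}$ is disjoint from $\beta_l$, the effect of $\tau_{l'}$ near $\beta_l$ is, after passing to lifts, a coherent translation that preserves each difference $m_i-n_i$, so $S_l$ is left unchanged (or, at worst, any residual change is $O(\sqrt K)$ and is absorbed into the slack by re-choosing $n_l$ last). Applying $f=\tau_1^{n_1}\cdots\tau_m^{n_m}$ to $\gamma$ then yields a curve for which $0\le\sum_{\gamma_i\in B_l}\Delta_i\le 4c_\S\sqrt K$ at every $\beta_l$, i.e.\ inequality (\ref{eq:DeltaBalanced}) holds, while $i(f\cdot\gamma,\Pi)\le c_\S\sqrt K$ still holds as noted above; feeding this into Claim \ref{cla:BridgingTotalLength} then gives Lemma \ref{lem:BridgingTotalLength} and hence Lemma \ref{lem:BridgingBdrySubwordIntBound}.

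I expect the main obstacle to be precisely the two structural facts invoked in the middle paragraph: that $\tau_l$ shifts all of $\{\Delta_i : \gamma_i\in B_l\}$ by one common amount, and that $\tau_{l'}$ with $l'\ne l$ leaves the sum $S_l$ essentially untouched. The delicacy is that the one- and two-hexagon neighborhoods used to define the $\Delta_i$ can overlap neighborhoods of other pants curves, so neither claim is quite formal; each should be proved by carefully tracking how the chosen lift $\tilde\beta_l$, its neighborhood $N_1(\tilde\beta_l)$, and the entry/exit hexagons $h_{n_i},\chi_{m_i}$ move under the relevant deck transformations and twists, mirroring the universal-cover bookkeeping already carried out in the proof of Claim \ref{cla:BridgingDifference}.
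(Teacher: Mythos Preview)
Your overall strategy is the same as the paper's: a single Dehn twist about $\beta_l$ shifts the total twisting at $\beta_l$ by a step of size at most $2\#B_l\le 2c_\S\sqrt K$, so one can land in a window of width $4c_\S\sqrt K$. The difference is in the execution, and it is exactly at the point you flag as ``the main obstacle.''

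You work directly with the geometric parameters $\Delta_i=m_i-n_i$ and assert that $\tau_l$ shifts every $\Delta_i$ by one common integer $c$. This is not clear, and is probably false as an exact statement: the $\Delta_i$ are read off from where the \emph{geodesic} $\tilde\gamma$ enters and exits $N_1(\tilde\beta_l)$, and after applying $\tau_l$ one must re-straighten to a new geodesic; the entry/exit hexagons can then jiggle by $\pm 1$. The paper avoids this entirely by passing to the word level. It introduces the signed word-twisting $\sigma_i|b_i|$ of each bridging subword and observes that $\tau_l^{n_l}$ acts on the edge-path $p(\gamma)$ purely combinatorially, replacing each bridging $b_i$ at $\beta_l$ by the one with parameter $\sigma_i|b_i|+2n_l$ and leaving every other letter unchanged. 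One then checks that the resulting word is actually $w_\Pi(f\cdot\gamma)$, chooses $n_l$ so that $\sum_{B_l}(\sigma_i|b_i|+2n_l)\in[c_\S\sqrt K,\,3c_\S\sqrt K]$ (possible since the step is $2\#B_l\le 2c_\S\sqrt K$), and only at the very end transfers back to the $\Delta_i$ via the already-proved inequality $|\Delta_i-\sigma_i|b_i||\le 1$ (inequality~(\ref{eq:biBoundedByDelta})), absorbing the $\pm\#B_l$ error to land in $[0,4c_\S\sqrt K]$.

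This word-level route also dissolves your second obstacle for free: since $\tau_{l'}$ with $l'\ne l$ modifies only the bridging subwords on $\beta_{l'}$, the word-level parameters $\sigma_i|b_i|$ at $\beta_l$ are literally unchanged, so no ``$O(\sqrt K)$ residual'' argument is needed. Your direct approach can be salvaged by proving only that $\tau_l^{n}$ shifts each $\Delta_i$ by $2n+O(1)$ and then absorbing $O(N_l)$ into the window, but the paper's detour through $\sigma_i|b_i|$ is both shorter and cleaner.
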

  
  \begin{proof}
   Consider the closed curve $p(\gamma)$ formed by concatenating the letters of $w_\Pi(\gamma)$. If $b_i \in B_l$ is a boundary subword bridging $\beta_l$, then the orientation of $\S$ assigns $b_i$ a sign $\sigma = \pm 1$ depending on where $b_i$ twists around $\beta_l$ to the right or to the left. (For example, we can define twisting to the right to be positive twisting.) Assign each $b_i \in B$ the twisting parameter
   \[
    \sigma_i |b_i|
   \]
   Note that knowing the twisting parameter and the curve $\beta_l$ uniquely determines $b_i$, up to orientation.
  
  Let $f = (\tau_1)^{n_1} \cdots (\tau_m)^{n_m} \in \Mod_\S$ be a composition of Dehn twists about $\Pi$. Then a representative of $f \cdot p(\gamma)$ can be obtained by changing only the bridging boundary subwords of $p(\gamma)$. In fact, the bridging boundary subword $f \cdot b_i$ of $f \cdot p(\gamma)$ will be the unique one whose twisting parameter is
  \[
   \sigma_i|b_i| + 2 n_l
  \]
  
  Thus, twisting once about $\beta_l$ adds $2\#B_l$ to the sum $\sum_{B_l} \sigma_i |b_i|$. As $\# B_l \leq c_\S \sqrt K$, we can choose integers $n_1, \dots, n_m$ so that
  \begin{equation}
  \label{eq:BoundWordTwists}
   c_\S \sqrt K \leq \sum_{B_l} \large ( \sigma_i |b_i| + 2 n_l \large ) \leq 3 c_\S \sqrt K
  \end{equation}
  for each $l$.  This means that the net twisting of $f \cdot p(\gamma)$ about $\beta_l$ is bounded in terms of $\sqrt K$.
  
  We wish to relate the net twisting of $f \cdot p(\gamma)$ with the net twisting of $f \cdot \gamma$. To do this, we will first show that the closed curve $f \cdot p(\gamma)$ is, in fact, the curve formed by concatenating the letters of $w_\Pi(f \cdot \gamma)$.
  
  Let $w$ be the cyclic word given by the edges in $f \cdot p(\gamma)$. When we went from $p(\gamma)$ to $f \cdot p(\gamma)$, we only changed the length and direction of bridging boundary subwords.  Thus, $w$ satisfies Definition \ref{defi:AllowableWords_Surface} because $w_\Pi(\gamma)$ does. In particular, $w$ is an allowable word living in $W_\Pi$. Furthermore, $f \cdot p(\gamma)$ must have the least possible number of boundary subwords among all such curves freely homotopic to $f \cdot \gamma$, because it has the same number of boundary subwords as $p(\gamma)$, which had the fewest possible number of boundary subwords. Therefore, $w = w_\Pi(f \cdot \gamma)$.
  
  Take a word $w = w_\Pi(\gamma)$ for some closed geodesic $\gamma$. Suppose $b_i$ is a bridging boundary subword and $\gamma_i$ is the corresponding boundary subarc for the pair $(\gamma, \Pi)$. We need to relate the twisting parameter $\Delta_i$ of $\gamma_i$ to the twisting parameter $\sigma_i |b_i|$ of $b_i$. Note that the sign of $\Delta_i$ is the same as the sign $\sigma_i$ of $b_i$ (if we let 0 have whatever sign is needed for this statement to hold). Thus, Inequality (\ref{eq:biLessThanDelta}) implies that 
  \[
   \sigma_i|b_i| - 1 \leq \Delta_i \leq \sigma_i|b_i| + 1 
  \]
  
  Let $f \cdot B_l = \{f \cdot \Delta_i\}$ be the set of twisting parameters of the bridging boundary subarcs of the pair $(f \cdot \gamma, \Pi)$ that bridge $\beta_l$. Then by Inequality \ref{eq:BoundWordTwists}, 
  \[
   0 \leq \sum_{f \cdot B_l} f \cdot \Delta_i \leq 4 c_\S \sqrt K
  \]
  \end{proof}
      
    \subsection{Proof of Lemma \ref{lem:BridgingBdrySubwordIntBound}}
    \label{sec:BridgingCombined}  
    Take the curve $\gamma' = f \cdot \gamma \in \Mod_\S \cdot \gamma$ defined in the previous claim. Note that $i(\gamma', \Pi) = i(\gamma, \Pi)$, since $\gamma'$ differs from $\gamma$ by Dehn twists about curves in $\Pi$. Furthermore, $\gamma'$ satisfies (\ref{eq:DeltaBalanced}). So Claims \ref{cla:BridgingDifference} and \ref{cla:BridgingTotalLength} imply that if $w_\Pi(\gamma') = b_1s_1 \dots b_ns_n$, and if $B$ is the set of bridging boundary subarcs of the pair $(\gamma', \Pi)$, then
    \[
     \sum_{\gamma_i \in B} |b_i| \lesssim i(\gamma', \gamma')
    \]
    Since $i(\gamma', \gamma') = i(\gamma, \gamma)$, we are done.
    \end{proof}

\section{Proposition \ref{prop:BdrySubwordIntBound} for interior boundary subwords}
\label{sec:InteriorBdrySubwordIntBound}

Next, we show Proposition \ref{prop:BdrySubwordIntBound} for interior boundary subwords:

\begin{lem}
\label{lem:InteriorBdrySubwordIntBound}
 Let $L,K > 0$. Let $\gamma \in \G^c(L,K)$ and let $\Pi$ be any pants decomposition so that $i(\gamma, \Pi) \leq c_\S \sqrt K$. If $w_{\Pi}(\gamma) = b_1s_1 \dots b_n s_n$ then
 \begin{equation}
 \label{eq1}
  \sum_{\Gamma'} |b_i| \lesssim K
 \end{equation}
 and furthermore,
 \begin{equation}
 \label{eq2}
  \#\Gamma' \lesssim \sqrt K
 \end{equation}
where the constants depend only on the topology of $\S$ and $\Gamma'$ is the set of interior boundary subarcs of the pair $(\gamma, \Pi)$.
\end{lem}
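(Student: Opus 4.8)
The plan is to imitate the proof of Lemma~\ref{lem:BridgingBdrySubwordIntBound}, exploiting a dichotomy with the bridging case: an interior boundary subarc lies in a single pair of pants $P$ and runs along the collar of the pants curve $\beta$ it sits next to, so if its boundary subword is long it must spiral far into that collar and then spiral back out \emph{on the same side}, and an arc doing this is forced to cross itself. So fix $\beta \in \Pi$ and one of its two sides --- there are $2m$ sides in all --- and let $\Gamma'_\ell$ be the interior boundary subarcs on the side $\ell$. For $\gamma_i = \alpha(b_i) \in \Gamma'_\ell$ I attach a winding parameter exactly as in Section~\ref{sec:BridgingTotalLength}: lift to $\tilde\S$, choose a lift $\tilde\beta$, number the hexagons of $N_1(\tilde\beta)$ on the side $\ell$ by $\{h_k\}_{k\in\Z}$, and set $\Delta_i = m_i - n_i$ if $\tilde\gamma_i$ enters at $h_{n_i}$ and leaves at $h_{m_i}$. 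The local analysis behind~(\ref{eq:biBoundedByDelta}) gives $|b_i| - c \le |\Delta_i| \le |b_i| + c$ with $c = c_\S$, and since $b_i$ is interior we have $|b_i| \ge 2$, so $|\Delta_i| \ge 1$. Thus it is enough to prove $\#\Gamma' \lesssim \sqrt K$ (for~(\ref{eq2})) together with $\sum_{\Gamma'}|\Delta_i| \lesssim K$ (for~(\ref{eq1})).

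Two universal-cover estimates do most of the geometric work. First, an arc in an annulus whose two endpoints lie on the same boundary component and which wraps $W$ times around the core crosses itself at least $|W| - 1$ times, because its in-spiral must meet its out-spiral; hence $i(\gamma_i, \gamma_i) \gtrsim |\Delta_i| - c$. Second, just as in Claim~\ref{cla:BridgingDifference}, if $\gamma_i, \gamma_j \in \Gamma'_\ell$ and $\Delta_i \ne \Delta_j$, then sliding one lift past the other by the deck transformation along $\tilde\beta$ forces $i(\gamma_i, \gamma_j) \gtrsim |\Delta_i - \Delta_j|$. By the first estimate and Lemma~\ref{lem:OverlapBound} (whose left side includes the diagonal terms $i(\gamma_i,\gamma_i)$), the interior subarcs with $|\Delta_i|$ larger than a suitable constant $c_\S$ already satisfy $\sum |\Delta_i| \lesssim i(\gamma,\gamma) \le K$. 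Note that, in contrast to the bridging case, no Dehn-twisting step is needed here: a single interior subarc pays for its own length in self-intersections, whereas a single bridging subarc (its two halves lying on opposite sides of $\beta$) does not.

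It remains to count the \emph{short} interior subarcs --- those with $|b_i| = O_\S(1)$, equivalently winding $O_\S(1)$ --- and this is the step I expect to be the main obstacle, because such subarcs around a fixed side can be pairwise disjoint and parallel, so the two estimates above say nothing about them. Here I would use $i(\gamma, \Pi) \le c_\S\sqrt K$ (Proposition~\ref{prop:NicePants}): the at most $c_\S\sqrt K$ strands of $\gamma$ that cross $\beta$ cut the collar of $\beta$ on side $\ell$ into at most $c_\S\sqrt K + 1$ ``fans'' of nested parallel subarcs, and within a fan the subarcs are cyclically ordered along $\gamma$ with consecutive ones joined by an excursion that leaves the collar of $\beta$ and comes back. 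Since $p(\gamma)$ does not back-track, each such excursion must either cross a curve of $\Pi$ or else run along the collar of a \emph{different} pants curve, i.e.\ contain an interior subarc on another side. A careful charging of fans to the $\Pi$-crossings that bound them, keeping track of how the excursions that stay inside one pair of pants nest within one another, should then give $\#\Gamma' \lesssim \sqrt K$; once this is known, the short subarcs contribute at most $O_\S(1)\cdot \#\Gamma' \lesssim \sqrt K$ to $\sum_{\Gamma'}|b_i|$, which combined with the previous paragraph yields~(\ref{eq1}). Pinning down this last bookkeeping --- in particular excluding a long chain of parallel short interior subarcs whose connecting excursions nest inside one another without ever producing an intersection or a $\Pi$-crossing --- is the delicate point, and is the interior-subword analogue of the combinatorial core of the bridging argument.
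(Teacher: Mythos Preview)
Your self-intersection estimate $i(\gamma_i,\gamma_i)\gtrsim|\Delta_i|$ is correct and does give $\sum_{\text{long}}|\Delta_i|\lesssim K$, but by itself it cannot give the count bound~(\ref{eq2}): if each long subarc has $|\Delta_i|\approx c_\S$, the diagonal terms only say there are $\lesssim K$ of them, not $\lesssim\sqrt K$. The pairwise estimate you quote, $i(\gamma_i,\gamma_j)\gtrsim|\Delta_i-\Delta_j|$, is the \emph{bridging} estimate and is the wrong one here: two interior subarcs lie on the same side of $\tilde\beta$, so their lifts link iff one's entry/exit indices interlace with the other's, and sliding gives $i(\gamma_i,\gamma_j)\gtrsim\min\{|\Delta_i|,|\Delta_j|\}$ instead (this is Claim~\ref{cla:Length4Relevant}). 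That estimate is what makes the count work: ordering the subarcs on one side by decreasing $|b_i|$ and summing over all pairs yields $\sum_i i\,|b_i|\lesssim K$, and then $|b_i|\ge 2$ gives $n^2\lesssim K$ for the set of such ``twisting'' subarcs (Lemma~\ref{lem:MaxRelevantBdrySubwordIntBound}).

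This pairwise estimate only kicks in once $|\Delta_i|,|\Delta_j|$ are at least roughly $3$, so the short subarcs still need separate treatment, and your fan/charging sketch does not close. The obstruction you name---a chain of parallel short interior subarcs whose connecting excursions nest without ever producing an intersection or a $\Pi$-crossing---is not ruled out by bookkeeping: in a single pair of pants (where $\#B=0$) one can write down such nested patterns combinatorially, so charging to $\Pi$-crossings cannot be the whole story. The paper instead proves a local geometric fact (Claim~\ref{cla:13Relevant}): if five \emph{consecutive} boundary subarcs $\gamma_{i-2},\dots,\gamma_{i+2}$ are all interior, then one of $\gamma_{i-1},\gamma_i,\gamma_{i+1}$ is already twisting. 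This is a case analysis in the universal cover, and it is exactly why boundary subarcs are defined via the two-hexagon neighborhood $N_2(\tilde\beta)$ rather than $N_1$. It yields $\#\Gamma'\le 3\#\Gamma+4\#B$ (Lemma~\ref{lem:RelevantAndBridgingPercent}), where $\Gamma$ is the set of twisting subarcs; since $\#\Gamma\lesssim\sqrt K$ by the pairwise argument and $\#B\le c_\S\sqrt K$ by hypothesis, (\ref{eq2}) follows, and then the short subarcs contribute only $O(\sqrt K)$ to~(\ref{eq1}).
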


For the rest of this section, fix a $\gamma \in \G^c(L,K)$ and choose a pants decomposition $\Pi$ so that $i(\gamma, \Pi) \leq c_\S \sqrt K$. Write
\[
 w_\Pi(\gamma) = b_1s_1 \dots b_n s_n
\]
Then $\gamma_i = \alpha(b_i)$ will be the boundary subarc associated to $b_i$ and the pair $(\gamma, \Pi)$.

\subsection{Intuition behind Lemma \ref{lem:InteriorBdrySubwordIntBound}}
\label{sec:InteriorBdrySubwordIntuition}
  The next part is intended to provide intuition for why Lemma \ref{lem:InteriorBdrySubwordIntBound} holds, as the proof itself is rather technical. The full outline of the proof can be found in Section \ref{sec:IdeaLemmaInteriorBdry}.
For now, suppose that $\gamma$ lies in a single pair of pants, so that all boundary subwords are interior boundary subwords. Suppose $b_i$ is an interior boundary subword, and $\gamma_i$ is the corresponding boundary subarc. Intuitively, if $|b_i| = 2n$ then 
\[
i(\gamma_i,\gamma_i) \approx n	
\]
We will save the explanation for later, but refer to Figure \ref{fig:BoundarySubarcs} on page \pageref{fig:BoundarySubarcs} for an illustration of why this should be true. In Figure \ref{fig:BoundarySubarcs}, the interior boundary subarc $b_i$ has length 6, and $\gamma_i$ has 3 self-intersections.

Moreover, we note that if two interior boundary subarcs, $b_i$ and $b_j$, wind around the same curve in $\Pi$, they will interfere with one another. Roughly, if $|b_i| = 2n$ and $|b_j| = 2m$, then 
\[
i(\gamma_i, \gamma_j) \approx \min \{2n, 2m\}
\]
(Figure \ref{fig:OverlapTies}.) Thus we can approximate the number of intersections between boundary subarcs if we just know the lengths of the corresponding boundary subwords.

 \begin{figure}[h!] \centering
  \includegraphics{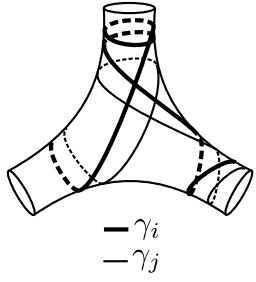}
 \caption[Relationship between boundary subword length and intersection number]{$|b_i| = 4, |b_j| = 2$, and $i(\gamma_i, \gamma_j) = 2$}
 \label{fig:OverlapTies}
 \end{figure}

So it is not unreasonable to assume that we have the following lower bound
\begin{equation}
\label{eq3}
i(\gamma_i, \gamma_j) \gtrsim \min \{|b_i|, |b_j|\}
\end{equation}
for all pairs of interior boundary subwords $(b_i, b_j)$ and some universal constant. 
We would then get Proposition \ref{prop:BdrySubwordIntBound} by summing inequality (\ref{eq3}) over all $i \leq j$. 
Specifically, relabel the interior boundary subwords so that $|b_1| \geq |b_2| \geq \dots \geq |b_n|$. Then, $\min \{|b_i|, |b_j|\} = |b_i|$ if $i \leq j$. Thus,
\[
 \sum_{i\leq j = 1}^n i(\gamma_i, \gamma_j)  \gtrsim \sum_{i=1}^n i |b_i|
\]
as $|b_i|$ is less than or equal to $i$ other word lengths.

So, if inequality (\ref{eq3}) held for all pairs of interior boundary subwords, then Lemma \ref{lem:OverlapBound} implies that
\begin{equation}
\label{eq4}
\sum_{i=1}^n i |b_i| \lesssim i(\gamma, \gamma)
\end{equation}
(This is almost true: see Remark \ref{rem:Sumibi}.)

On the one hand, if we use that $|b_i| \leq i |b_i|$, inequality (\ref{eq4}) implies that
\[
\sum_{i=1}^n |b_i| \lesssim i(\gamma,\gamma)
\]
so the total length of all interior boundary subwords is coarsely bounded by intersection number.

On the other hand, we can use that $|b_i| \geq 2$ for all interior boundary subwords. Then inequality (\ref{eq4}) implies that
\begin{align*}
 n(n+1) & \lesssim i(\gamma, \gamma) \implies\\
 n^2 & \lesssim i(\gamma, \gamma)
\end{align*}
so the number of interior boundary subwords is coarsely bounded by the square root of intersection number. So if all pairs $(\gamma_i, \gamma_j)$ satisfied Inequality (\ref{eq3}), we would get Proposition \ref{prop:BdrySubwordIntBound}.

In general, not all pairs of interior boundary subwords satisfy inequality (\ref{eq3}). To deal with this, we first need some more definitions.

\subsection{Relevant subarcs, and an outline of the proof of Lemma \ref{lem:InteriorBdrySubwordIntBound}}
\label{sec:ProofEq3}

In Section \ref{sec:InteriorBdrySubwordIntuition}, we hoped to convince the reader that we want pairs of boundary subarcs $\gamma_i = \alpha(b_i)$ and $\gamma_j = \alpha(b_j)$ to satisfy Inequality (\ref{eq3}). 

So we will now investigate which pairs $(\gamma_i, \gamma_j)$ of interior boundary subarcs satisfy a more precise version of Inequality (\ref{eq3}). For that, we make the following definition:
\begin{defi}
\label{def:Relevant}
We say that a set $\Gamma$ of interior boundary subarcs is \textbf{relevant} if $\forall \gamma_i, \gamma_j \in \Gamma$,
\begin{equation}
\label{eq:3}
\frac 15 \min\{|b_i|, b_j|\} \leq i(\gamma_i, \gamma_j)
\end{equation}
where $\gamma_i = \alpha(b_i)$ and $\gamma_j = \alpha(b_j)$.
\end{defi}
Thus a relevant set of interior boundary subarcs is one in which any two elements satisfy inequality (\ref{eq:3}).

Suppose $\Pi = \{\beta_1, \dots, \beta_m\}$. Recall that $\Gamma'_{2j}$ and $\Gamma'_{2j+1}$ are the sets of interior boundary subarcs that lie on the positive and negative sides of $\beta_j$, respectively. Then there is no hope that a pair $(\gamma_i, \gamma_j)$ will satisfy inequality (\ref{eq:3}) if $\gamma_i$ and $\gamma_j$ lie in different sets $\Gamma'_k$ and $\Gamma'_l$, respectively. So, we wish to find a maximal relevant subset 
\[
\Gamma_i \subset \Gamma'_i
\]
for each $i$. 

Building the maximal relevant subset $\Gamma_i$ becomes easier if we put a restriction on which interior boundary subarcs we consider.

\begin{defi}
\label{def:BetaRelevant}
 Let $\gamma_i \in \Gamma'_{2j} \cup \Gamma'_{2j+1}$. Suppose we have a lift $\tilde \gamma_i = \tilde \gamma \cap N_k(\tilde \beta_j)$ of $\gamma_i$, as in Definition \ref{def:TwistTie}. Then $\gamma_i$ is \textbf{twisting} if
 \[
  f(\tilde \gamma_i) \cap \tilde \gamma_i \neq \emptyset
 \]
where $f$ is a deck transformation of $\tilde \S$ acting by translation along $\tilde \beta_j$.
\end{defi}
Essentially, $\gamma_i \in \Gamma'_{2j}$ or $\Gamma'_{2j+1}$ is twisting if at least one of its self-intersections comes from twisting around $\beta_j$.

\subsubsection{Idea of proof of Lemma \ref{lem:InteriorBdrySubwordIntBound}}
\label{sec:IdeaLemmaInteriorBdry}
Then we show Lemma \ref{lem:InteriorBdrySubwordIntBound} as follows:
 \begin{itemize}
 \item We show that there is a unique maximal relevant subset $\Gamma_j \subset \Gamma'_j$, for each $j$, that consists entirely of twisting subarcs. In particular, we show that if $\Gamma_1, \Gamma_2 \subset \Gamma'_i$ are relevant subsets, and if each element of $\Gamma_1 \cup \Gamma_2$ is twisting, then $\Gamma_1 \cup \Gamma_2$ is a relevant subset of $\Gamma'_i$ (Lemma \ref{lem:UnionRelevant}).
 
 \item Let $\Gamma = \cup \Gamma_j$ be the union of all the maximal relevant subsets. Because all pairs of interior boundary subarcs in each $\Gamma_j$ satisfy inequality (\ref{eq:3}), a computation similar to the one in Section \ref{sec:InteriorBdrySubwordIntuition} implies that
 \[
  \sum_{\gamma_i \in \Gamma} |b_i| \lesssim K
 \]
 and
 \[
  \# \Gamma \lesssim \sqrt K
 \]
 for each $j$ (Section \ref{sec:IntLemForRelevantSubsets}).
 
 \item The union of all maximal relevant subsets turns out to be quite large. We show that
 \[
  3 \# \Gamma + 4 \# B \geq \# \Gamma'
 \]
where $\Gamma = \cup \Gamma_j$, and $B$ is the set of all bridging boundary subarcs (Lemma \ref{lem:RelevantAndBridgingPercent}.) 

We know that $\#B \leq c_\S \sqrt K$. So this allows us to promote the above inequalities for $\Gamma$ to inequalities for all of $\Gamma'$. Namely, we show that $\sum_{\Gamma'} |b_i| \lesssim K$ and $\# \Gamma' \lesssim \sqrt K$, completing the proof of Lemma \ref{lem:InteriorBdrySubwordIntBound} (Section \ref{sec:ProofInteriorBdrySubwordIntBound}).
\end{itemize}

\subsection{Finding a maximal relevant subset}

The following lemma implies that each $\Gamma_j'$ contains a maximal relevant subset $\Gamma_j$, where every element is twisting.
\begin{lem}
\label{lem:UnionRelevant}
 Let $\Gamma_1, \Gamma_2 \subset \Gamma'_k$ be relevant subsets so that each element of $\Gamma_1 \cup \Gamma_2$ is twisting. Then $\Gamma_1 \cup \Gamma_2$ is a relevant subset of $\Gamma'_k$.
\end{lem}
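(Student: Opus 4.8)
The plan is to reduce the lemma to a single geometric estimate about a pair of twisting interior subarcs, and then prove that estimate in the universal cover, closely following the translation arguments of Lemma~\ref{lem:PiHomeo} and Claim~\ref{cla:BridgingDifference}.

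\textbf{Reduction.} Since $\Gamma_1$ and $\Gamma_2$ are each relevant, inequality (\ref{eq:3}) already holds for every pair of subarcs lying both in $\Gamma_1$ or both in $\Gamma_2$. Hence $\Gamma_1\cup\Gamma_2$ is relevant once (\ref{eq:3}) is checked for a mixed pair $\gamma_i\in\Gamma_1\setminus\Gamma_2$, $\gamma_j\in\Gamma_2\setminus\Gamma_1$. Such $\gamma_i,\gamma_j$ are distinct, both twisting by hypothesis, and both lie in $\Gamma'_k$, i.e.\ on a fixed side of a fixed curve $\beta\in\Pi$. So it suffices to prove the following: \emph{any two distinct twisting interior boundary subarcs $\gamma_i=\alpha(b_i)$, $\gamma_j=\alpha(b_j)$ lying on the same side of the same pants curve satisfy $\tfrac15\min\{|b_i|,|b_j|\}\le i(\gamma_i,\gamma_j)$.}

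\textbf{Set-up in $\tilde\S$.} I would lift $\beta$ to a geodesic $\tilde\beta$ and number the hexagons $\{h_n\}_{n\in\Z}$ on the positive side of $\tilde\beta$ as in Section~\ref{sec:BridgingTotalLength}, so the deck transformation $f$ translating along $\tilde\beta$ satisfies $f(h_n)=h_{n+2}$. After translating by deck transformations, write $\tilde\gamma_i=\tilde\gamma\cap N_k(\tilde\beta)$ and $\tilde\gamma_j=\tilde\gamma'\cap N_k(\tilde\beta)$ (with $k=1$ or $2$) for suitable lifts $\tilde\gamma,\tilde\gamma'$ of $\gamma$, where $\tilde\gamma_i$ enters $N_1(\tilde\beta)$ at $h_{a_i}$ and exits at $h_{b_i}$, and likewise for $j$; set $I_i=[\min(a_i,b_i),\max(a_i,b_i)]$, $|I_i|=|a_i-b_i|$. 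Because $\gamma_i$ is interior, $\tilde\gamma_i$ is disjoint from $\tilde\beta$, and it cannot be asymptotic to $\tilde\beta$ since $\gamma\neq\beta$ is a closed geodesic (cf.\ the argument in Lemma~\ref{lem:PiHomeo}); hence the nearest-point projection to $\tilde\beta$ restricts to a homeomorphism from $\tilde\gamma_i$ onto the interval $I_i$, so $\tilde\gamma_i$ is graphical over $I_i$. Exactly as in Inequalities (\ref{eq:biLessThanDelta})--(\ref{eq:biBoundedByDelta}) — the path $p(\gamma)$ runs along $\tilde\beta$ through precisely the hexagons into which $\tilde\gamma_i$ dips — one gets $|b_i|-1\le|I_i|\le|b_i|+1$, and similarly for $j$. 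Finally, $\gamma_i$ twisting means $f(\tilde\gamma_i)$, an interior arc over $I_i+2$ on the positive side, meets $\tilde\gamma_i$; by convexity this forces $|I_i|$ to exceed a fixed constant, so short interior subarcs are never twisting.

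\textbf{Counting intersections.} Assume without loss of generality $|b_i|\le|b_j|$, so $|I_i|\le|I_j|+2$. Consider the lifts $\tilde\gamma_j^{(t)}:=f^t(\tilde\gamma_j)$ of $\gamma_j$; each is an interior arc on the positive side of $\tilde\beta$, graphical over $I_j+2t$. The geometric core is that $\tilde\gamma_i$ meets $\tilde\gamma_j^{(t)}$ whenever $I_i$ and $I_j+2t$ are \emph{interleaved}, i.e.\ overlap with neither containing the other: over their overlap interval both arcs are graphs, and comparing boundary values of these graphs (at one end $\tilde\gamma_i$ is at its ``outer'' endpoint height while $\tilde\gamma_j^{(t)}$ is deep in its dip, and vice versa at the other end) forces a crossing by the intermediate value theorem, just as the endpoint-interleaving forces a crossing in the displayed equation preceding Figure~\ref{fig:BridgingIntersections}. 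Now count: the number of $t$ with $(I_j+2t)\cap I_i\neq\emptyset$ is $\gtrsim\tfrac12(|I_i|+|I_j|)$, while the number among these with $I_i\subseteq I_j+2t$ is $\lesssim\tfrac12(|I_j|-|I_i|)$ (the reverse containment is impossible since $|I_j|\ge|I_i|-2$), leaving $\gtrsim|I_i|-O(1)$ interleaved values of $t$. Each yields an intersection of $\tilde\gamma_i$ with a distinct lift of $\gamma_j$, and these project to distinct intersection points of $\gamma_i$ and $\gamma_j$ on $\S$ by the covering-space bookkeeping of Claim~\ref{cla:BridgingDifference}. Hence $i(\gamma_i,\gamma_j)\gtrsim|I_i|-O(1)\gtrsim|b_i|=\min\{|b_i|,|b_j|\}$.

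\textbf{Main obstacle.} Two points need care. First, the geometric claim that interleaved interior lifts on a common side must cross: the configuration differs from the bridging case — both endpoints of each arc are ``outer'' rather than on opposite sides of $\tilde\beta$ — so one must verify that interleaving of the projection intervals really does force the geodesic subarcs to cross within the overlap; the disjointness of $\tilde\gamma_i$ from $\tilde\beta$ and the resulting graphicality are precisely what make the intermediate-value argument legitimate. Second, upgrading the coarse bound $i(\gamma_i,\gamma_j)\gtrsim|I_i|-O(1)$ to the stated inequality with constant $\tfrac15$: this means tracking all the additive errors above and separately dispatching the finitely many small-$|b_i|$ cases, using the explicit lower bound on $|I_i|$ that twisting forces. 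I expect pinning down the exact interleaving condition and confirming the crossing to be the harder part; the constant bookkeeping is then routine.
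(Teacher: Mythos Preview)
Your reduction and the translation-counting argument for the large-$|b_i|$ regime are essentially what the paper does in Claim~\ref{cla:Length4Relevant}. One correction there: the lifted crossings $f^t(\tilde\gamma_j)\cap\tilde\gamma_i$ project only at most $2$-to-$1$ to points of $\gamma_i\cap\gamma_j$ on $\S$ (two powers $t$ can hit the same self-intersection of $\gamma$), so you lose a factor of $2$ you have not accounted for; this is handled explicitly in the paper via Lemma~\ref{lem:PiHomeo}.

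The genuine gap is your plan for small $|b_i|$. You propose to eliminate those cases by asserting that twisting forces $|I_i|$ past a fixed threshold, after which ``the constant bookkeeping is then routine.'' This is false. The intersection $f(\tilde\gamma_i)\cap\tilde\gamma_i$ witnessing twisting may lie in $N_2(\tilde\beta)\setminus N_1(\tilde\beta)$ and hence does not constrain the $N_1$-interval $I_i$ at all; the paper's own Claim~\ref{cla:13Relevant} exhibits twisting interior subarcs that pass through only three hexagons of $N_1(\tilde\beta)$, i.e.\ $|I_i|=2$ with $|b_i|\leq 3$. For such arcs your interleaved-translate count gives nothing. The paper therefore handles $\min\{|b_i|,|b_j|\}\leq 3$ by a completely different argument, and it is \emph{here}, not in the long-word regime, that the twisting hypothesis is actually used: from $f(\tilde\gamma_i)\cap\tilde\gamma_i\neq\emptyset$ one assembles an $f$-invariant region $R_i$ bounded by $\tilde\beta$ and a curve in $\bigcup_m f^m(\tilde\gamma_i)$, and likewise $R_j$; then either one of $R_i,R_j$ contains the other (forcing $\tilde\gamma_i$, whose endpoints lie outside $R_j$ but which runs along $\partial R_i\subset R_j$, to cross $\partial R_j$), or their boundaries meet. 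Either way some translate of $\tilde\gamma_j$ meets $\tilde\gamma_i$, giving $i(\gamma_i,\gamma_j)\geq 1$, which suffices since $\tfrac15\min\{|b_i|,|b_j|\}\leq\tfrac35<1$. This region argument is the missing idea.
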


\begin{proof}
 To prove this lemma, we need to take $\gamma_i= \alpha(b_i) \in \Gamma_1, \gamma_j = \alpha(b_j) \in \Gamma_2$ and show that 
\[
 \frac 15 \min\{|b_i|, |b_j|\} \leq i(\gamma_i, \gamma_j)
\]

Suppose $\Gamma'_k$ is the set of interior boundary subarcs that lie on some side of $\beta_l \in \Pi$. Lift the hexagon decomposition of $\S$ to the universal cover, and take some lift $\tilde \beta_l$ of $\beta_l$. 

Take two lifts $\tilde \gamma$ and $\tilde \gamma'$ of $\gamma$ so that we get lifts $\tilde \gamma_i$ and $\tilde \gamma_j$ of $\gamma_i$ and $\gamma_j$, respectively, with
\[
 \tilde \gamma_i = \tilde \gamma \cap N_*(\tilde \beta_l) \mbox{ and } \tilde \gamma_j = \tilde \gamma' \cap N_*(\tilde \beta_l)
\]
where $*$ is either 1 or 2. 

It is easier to show that the pair $(b_i, b_j)$ satisfies inequality (\ref{eq:3}) if $|b_i|, |b_j| \geq 4$. So we do that first:
\begin{cla}
\label{cla:Length4Relevant}
 Let $\gamma_i = \alpha(b_i), \gamma_j = \alpha(b_j) \in \Gamma'_k$. Suppose $|b_i|, |b_j| \geq 4$. Then 
 \[
  \frac 15 \min\{|b_i|, b_j|\} \leq i(\gamma_i, \gamma_j)
 \]
\end{cla}
\begin{proof}
Number the hexagons on either side of $\tilde \beta_l$ as in the proof of Lemma \ref{lem:BridgingTotalLength} (Figure \ref{fig:HexagonOrder}). Without loss of generality, $\tilde \gamma$ and $\tilde \gamma'$ lie on the side of $\tilde \beta_l$ with hexagons labeled $\dots, h_0, h_1, \dots$. Suppose $\tilde \gamma_i$ enters $N_1(\tilde \beta_l)$ at hexagon $h_{n_i}$ and exits at hexagon $h_{m_i}$, and likewise, $\tilde \gamma_j$ enters at hexagon $h_{n_j}$ and exits at $h_{m_j}$. Up to changing orientation of $\tilde \gamma$ and $\tilde \gamma'$, we can assume that $n_i < m_i$ and $n_j < m_j$. Then $\tilde \gamma_i$ intersects $\tilde \gamma_j$ if
\begin{align*}
 n_i & < n_j < m_i < m_j \mbox{ or } \\
 n_j & < n_i < m_j < m_i
\end{align*}
(Figure \ref{fig:Delta_iDelta_j}).

\begin{figure}[h!]
 \centering 
 \includegraphics{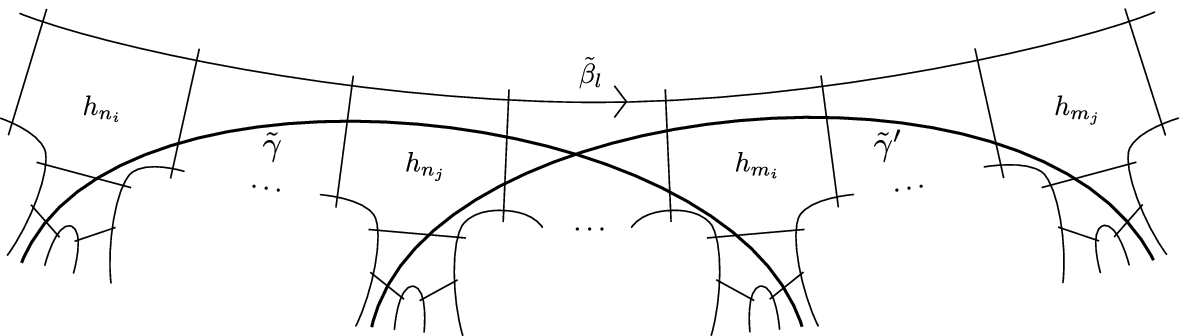}
 \caption{}
 \label{fig:Delta_iDelta_j}
\end{figure}

\begin{rem}
\label{rem:IntAndBridgeHexNumbers}
 Suppose instead of two interior boundary subarcs, we consider $\gamma_i$ as above, but let $\gamma_j$ be a boundary subarc bridging $\beta_l$. Then we get a similar inequality. In particular, if a lift $\tilde \gamma_j$ of $\gamma_j$ intersects $\tilde \beta_l$, then $\# \tilde \gamma_i \cap \tilde \gamma_j = 1$ if $n_i \leq n_j \leq m_i$.
\end{rem}

Just as in the proof of Lemma \ref{lem:BridgingTotalLength}, let $f_l$ be the deck transformation that acts by translation along $\tilde \beta_l$ with translation length $l(\beta_l)$. We will use $f_l$ to slide $\tilde \gamma_i$ around and create intersections with $\tilde \gamma_j$.

Up to replacing $f_l$ with $f_l^{-1}$, we have $f_l(h_n) = h_{n+2}$. So $f^k_l(\tilde \gamma_i)$ enters and exits $N_1(\tilde \beta_l)$ in hexagons $h_{n_i + 2k}$ and $h_{m_i + 2k}$, respectively. Thus, whenever
\begin{align*}
 n_{i+2k} < n_j &  < m_{i+2k} < m_j \mbox{ or }  \\
 n_j < n_{i+2k} & <  m_j < m_{i+2k}
\end{align*}
we have that
\[
 \# f^k_l (\tilde \gamma_i) \cap \tilde \gamma_j = 1
\]

So we can count the number of powers $k$ that result in an intersection:
\[
 \#\{ k \in \Z \ | \ \# f_l^k (\tilde \gamma_i) \cap \tilde \gamma_j = 1 \} \geq \min \{m_i - n_i, m_j - n_j\} - 2
\]

There are two things left to do:
\begin{enumerate}
 \item Show that the set of powers $k$ that result in intersections between $\tilde \gamma_i$ and $\tilde \gamma_j$ in $\tilde \S$ are in at most 2-to-1 correspondence with intersections of $\gamma_i$ and $\gamma_j$.
 \item Show that $m_i - n_i \geq |b_i| -1 $.
\end{enumerate}

Let 
\[
 \{\tilde x_k\} = f_l^k(\tilde \gamma_i) \cap \tilde \gamma_j 
\]
for each power $k$ so that $f_l^k(\tilde \gamma_i) \cap \tilde \gamma_j  \neq \emptyset$. If $k \neq k'$, then $\tilde x_k \neq \tilde x_{k'}$. Let $\pi(\tilde x_k) = x_k$ be the projection of the intersection points to $\S$, for each $k$. The problem is if $x_k = x_{k'}$ for some $k$ and $k'$.  But $\tilde x_k$ and $\tilde x_{k'}$ project to the same point in $\S$ if and only if they correspond to the same self-intersection point of $\gamma$. By Lemma \ref{lem:PiHomeo}, $l(\gamma_i) \leq l(\gamma)$, so $\gamma_i$ and $\gamma_j$ are proper subarcs of $\gamma$. So in fact, for each $\tilde x_k$ there is at most one other $\tilde x_{k'}$ that projects down to the same intersection between $\gamma_i$ and $\gamma_j$. (And, in fact, if $i = j$, each $\tilde x_k$ gets paired up with exactly one other $\tilde x_{k'}$ in this way.)

Thus, the set
\[
 \{ k \in \Z \ | \ \# f_l^k (\tilde \gamma_i) \cap \tilde \gamma_j = 1 \}
\]
is in at most 2-to-1 correspondence with intersections between $\gamma_i$ and $\gamma_j$. Therefore, 
\[
i(\gamma_i, \gamma_j) \geq \frac 12 \#\{ k \in \Z \ | \ \# f_l^k (\tilde \gamma_i) \cap \tilde \gamma_j = 1 \}
\]
and so,
\[
 i(\gamma_i, \gamma_j) \geq \frac 12 \min \{m_i - n_i, m_j - n_j\} - 1
\]

 Lastly, we will show that $m_i - n_i \geq |b_i| -1$. In fact, we can use the same argument as in the proof of Claim \ref{cla:BridgingTotalLength}. Let $p(\gamma)$ be the curve formed by concatenating the edges in $w(\gamma)$. There is a homotopy between $p(\gamma)$ and $\gamma$. We can lift this homotopy so that $\gamma$ lifts to $\tilde \gamma$ and $p(\gamma)$ lifts to $\tilde p(\gamma)$. Each edge of $\tilde p(\gamma)$ lies on the boundary of two hexagons. If an edge $e$ lies on the boundary of $h$ and $h'$, then by construction, $\tilde \gamma$ must pass through either $h$ or $h'$. So $\tilde \gamma$ must pass through at least $|b_i|$ hexagons in $N_1(\tilde \beta_i)$. But $\gamma_i$ passes through exactly $m_i - n_i + 1$ hexagons in $N_1(\tilde \beta)$. Thus,
 \[
  m_i - n_i \geq |b_i| - 1 \mbox{ and } m_j - n_j \geq |b_j| -1
 \]
 Therefore,
 \begin{align*}
i(\gamma_i, \gamma_j) & \geq \frac 12 \min \{|b_i|, |b_j|\} \} - \frac 32 \\
                      & = \min \frac 12 \{|b_i| - 3, |b_j| - 3\}
\end{align*}
 As $|b_i|, |b_j| \geq 4$, this implies
 \[
  i(\gamma_i, \gamma_j) \geq \frac 15 \min \{|b_i|, |b_j|\} \}
 \]

\end{proof}

We showed that if $|b_i| \geq 4$ then $\{\gamma_i\}$ is relevant. In fact, the proof of Claim \ref{cla:Length4Relevant} also implies the following.
\begin{cor}
\label{rem:Length4Relevant}
 Let $b_i$ be an interior boundary subword. If $|b_i| \geq 4$ then the boundary subarc $\gamma_i$ is twisting.
\end{cor}

Moreover, an even stronger statement follows from the proof. Note that $|b_i| \geq 4$ implies that $\tilde \gamma_i$  passes through at least 4 hexagons in $N_1(\tilde \beta_l)$, but not vice versa.
\begin{cor}
\label{rem:Pass4Hexagons}
 Let $b_i$ be an interior boundary subword. If $\tilde \gamma_i$ passes through at least 4 hexagons in $N_1(\tilde \beta_l)$ then $\{\gamma_i\}$ is relevant and $\gamma_i$ is twisting.
\end{cor}

\noindent \textit{Completion of proof of Lemma \ref{lem:UnionRelevant}.}
 Let $\Gamma_1, \Gamma_2 \subset \Gamma'_k$ be two relevant subsets so that each element of $\Gamma_1 \cup \Gamma_2$ is twisting. We need to show that given any $\gamma_i = \alpha(b_i) \in \Gamma_1$ and $\gamma_j = \alpha(b_j) \in \Gamma_2$ that
 \[
  i(\gamma_i, \gamma_j) \geq \frac 15 \min\{|b_i|, |b_j|\}
 \]

 If $|b_i|, |b_j| \geq 4$, then we are done by Claim \ref{cla:Length4Relevant}. So suppose that $\min \{ |b_i|, |b_j| \} \leq 3$. Thus, $\frac 15 \min\{|b_i|, |b_j|\} \leq 1$. So we just need to show that $i(\gamma_i, \gamma_j) \geq 1$.
 
 Again, consider the lifts $\tilde \gamma_i$ and $\tilde \gamma_j$, as above. Since $\gamma_i$ and $\gamma_j$ are twisting, we have the deck transformation $f$ acting by translation along $\tilde \beta_l$ so that
 \[
  \tilde \gamma_i \cap f \cdot \tilde \gamma_i \neq \emptyset \mbox{ and } \tilde \gamma_j \cap f \cdot \tilde \gamma_j \neq \emptyset
 \]
 Take the set $\{f^m \cdot \tilde \gamma_i\}_{m \in \Z}$ of all translations of $\tilde \gamma_i$ by $f$. Because $f^m \cdot \tilde \gamma_i$ intersects $f^{m+1} \cdot \tilde \gamma_i$ for each $m$, we get a region $R_i$ whose boundary consists of $\tilde \beta_l$ and a subarc of $\cup_m f^m(\gamma_i)$. Likewise, we can form a region $R_j$ from the translates of $\tilde \gamma_j$ that has the same property (Figure \ref{fig:CrossRegion}.) 
 \begin{figure}[h!]
  \centering
  \includegraphics{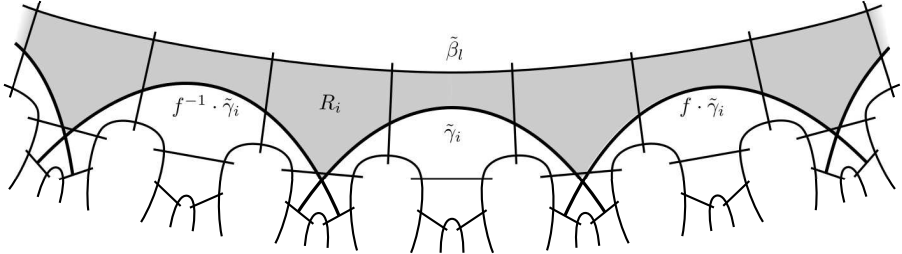}
  \caption{The region $R_i$ is shaded.}
  \label{fig:CrossRegion}
 \end{figure}

 These regions overlap in a neighborhood of $\tilde \beta_l$. So there are two cases. Either one of $R_i$ and $R_j$ contains the other, or neither $R_i$ nor $R_j$ contains the other. First, suppose without loss of generality that 
 \[
  R_i \subset R_j
 \]
 We know that $\tilde \gamma_i$ has endpoints on $\partial N_*(\tilde \beta_l)$ where $* = 1$ or 2, and passes along the boundary of $R_i$, which is contained inside $R_j$. Furthermore, $R_j$ is a proper subset of $N_*(\tilde \beta_l)$. Thus, $\tilde \gamma_i$ must pass through the boundary of $R_j$. Since $\tilde \gamma_i$ does not cross $\tilde \beta_l$, there is some $m$ so that $\tilde \gamma_i \cap f^m(\tilde \gamma_j) \neq \emptyset$. In other words, $i(\gamma_i, \gamma_j) \geq 1$.
 
 Suppose now that
 \[
  R_i \not \subset R_j \mbox{ and } R_j \not \subset R_i
 \]
 Then the boundary of $R_i$ must intersect the boundary of $R_j$ somewhere. So there are powers $m$ and $m'$ so that $f^m \cdot \tilde \gamma_i  \cap f^{m'} \cdot \tilde \gamma_j \neq \emptyset$. Again, we conclude that $i(\gamma_i, \gamma_j) \geq 1$.
\end{proof}

 \begin{rem}
  Let $b_i \in \Gamma'_{2k} \cup \Gamma'_{2k+1}$ be a twisting interior boundary subword, and let $b_j \in B_k$ be a bridging boundary subword. Then the above proof also estimates the contribution that the pair $(b_i, b_j)$ make to intersection number. In particular,
  \[
   i(\gamma_i, \gamma_j) \geq \frac 15 |b_i| 
  \]
  where we use Remark \ref{rem:IntAndBridgeHexNumbers} for the case where $|b_i| \geq 4$.
  \end{rem}

\subsection{Lemma \ref{lem:InteriorBdrySubwordIntBound} for relevant subsets}
\label{sec:IntLemForRelevantSubsets}
By Lemma \ref{lem:UnionRelevant}, each set $\Gamma_k'$ of interior boundary subarcs has a unique maximal relevant subset $\Gamma_k \subset \Gamma_k'$ consisting entirely of twisting boundary subarcs. Let 
\[
\Gamma = \cup \Gamma_k
\]
 be the union of these maximal relevant subsets. Then the intuitive argument for Lemma \ref{lem:InteriorBdrySubwordIntBound} in Section \ref{sec:InteriorBdrySubwordIntuition} gives us the inequalities for $\Gamma$. 
\begin{lem}
\label{lem:MaxRelevantBdrySubwordIntBound}
 For any pair $(\gamma, \Pi)$, we have that
 \[
  \sum_{\Gamma} |b_i| \lesssim K
 \]
 and 
 \[
  \# \Gamma \lesssim \sqrt K
 \]
 where $\Gamma$ is the union of maximal relevant subsets of interior boundary subarcs for $(\gamma, \Pi)$, and the constants depend only on $\S$.
\end{lem}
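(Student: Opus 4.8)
The plan is to make rigorous the heuristic computation of Section~\ref{sec:InteriorBdrySubwordIntuition}. By Lemma~\ref{lem:UnionRelevant} we may fix, for each of the (at most $2m$) sets $\Gamma'_k$ of interior boundary subarcs lying on one side of a curve in $\Pi$, its unique maximal relevant subset $\Gamma_k \subseteq \Gamma'_k$ consisting of twisting subarcs, and set $\Gamma = \bigcup_k \Gamma_k$. Both desired inequalities will fall out of a single estimate: the total number of intersections among the subarcs of $\Gamma$, organized by which $\Gamma_k$ they lie in, is bounded below by a weighted sum of the $|b_i|$ and bounded above by $25\,i(\gamma,\gamma) \le 25K$ via Lemma~\ref{lem:OverlapBound}.

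First I would fix $k$ and relabel the subarcs of $\Gamma_k$ as $\gamma_1, \dots, \gamma_{N_k}$, with $\gamma_i = \alpha(b_i)$, so that $|b_1| \ge |b_2| \ge \cdots \ge |b_{N_k}|$ (this relabeling is local to $\Gamma_k$). Since $\Gamma_k$ is relevant, for every pair $i \le j$ the defining inequality~(\ref{eq:3}) of Definition~\ref{def:Relevant} gives $i(\gamma_i,\gamma_j) \ge \frac15 \min\{|b_i|,|b_j|\} = \frac15 |b_j|$; here the case $i = j$ is allowed and records the self-intersections of $\gamma_i$. Summing over all pairs $i \le j$ inside $\Gamma_k$, each $|b_j|$ is picked up by the $j$ pairs $(1,j),\dots,(j,j)$, so
\[
\sum_{\substack{i \le j \\ \gamma_i,\gamma_j \in \Gamma_k}} i(\gamma_i,\gamma_j) \ \ge\ \frac15 \sum_{j=1}^{N_k} j\,|b_j|.
\]
Now I would sum over $k$. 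The sets $\Gamma_k$ are pairwise disjoint subsets of the set of all boundary subarcs of $(\gamma,\Pi)$, and we only use pairs lying in a common $\Gamma_k$, so, since intersection numbers are nonnegative, the left-hand sides summed over $k$ are dominated by $\sum_{i,j} i(\gamma_i,\gamma_j)$ taken over all boundary subarcs, which is at most $25\,i(\gamma,\gamma) \le 25K$ by Lemma~\ref{lem:OverlapBound}. Hence, writing $j$ for the rank of a subarc inside its own $\Gamma_k$,
\[
\sum_k \sum_{j=1}^{N_k} j\,|b_j| \ \lesssim\ K
\]
with a universal constant. The first conclusion is immediate from $|b_j| \le j\,|b_j|$, giving $\sum_{\Gamma} |b_i| \le \sum_k \sum_j j\,|b_j| \lesssim K$. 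For the second, every interior boundary subword satisfies $|b_i| \ge 2$ by Definition~\ref{defi:AllowableWords_Surface}, so for each $k$ we get $2 \sum_{j=1}^{N_k} j = N_k(N_k+1) \le \sum_k \sum_j j\,|b_j| \lesssim K$, whence $N_k \lesssim \sqrt K$; summing over the at most $2m$ values of $k$ yields $\#\Gamma = \sum_k N_k \lesssim \sqrt K$. All constants depend only on the number $m$ of curves in $\Pi$, hence only on the topology of $\S$.

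The argument is essentially bookkeeping once Lemmas~\ref{lem:UnionRelevant} and~\ref{lem:OverlapBound} are available, so the ``main obstacle'' is minor. One must check that the diagonal terms $i(\gamma_i,\gamma_i)$ are legitimately counted: each self-intersection of $\gamma_i$ is a self-intersection of $\gamma$ because $\gamma_i$ is a proper subarc of $\gamma$ by Lemma~\ref{lem:PiHomeo}, and Lemma~\ref{lem:OverlapBound} already absorbs the bounded multiplicity with which a single self-intersection of $\gamma$ can be counted among the boundary subarcs. One must also carry out the passage from $\min\{|b_i|,|b_j|\}$ to $|b_j|$ precisely via the monotone relabeling, exactly as in Section~\ref{sec:InteriorBdrySubwordIntuition}; this is where the mild imprecision flagged in Remark~\ref{rem:Sumibi} must be tracked. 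The only genuinely global input is that the number of sets $\Gamma'_k$ is bounded in terms of $\S$, which is what permits the per-$k$ estimates to be summed.
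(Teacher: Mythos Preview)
Your proposal is correct and follows essentially the same approach as the paper: relabel each $\Gamma_k$ in decreasing order of $|b_i|$, use the relevance inequality~(\ref{eq:3}) to bound $\sum_{i\le j} i(\gamma_i,\gamma_j)$ from below by $\frac15\sum_j j\,|b_j|$, and then invoke Lemma~\ref{lem:OverlapBound} to bound the intersection sum from above by $25K$. The only cosmetic differences are that the paper applies Lemma~\ref{lem:OverlapBound} separately to each $\Gamma_k$ before summing over $k$ (you sum first, which gives a slightly better constant), and for the second conclusion the paper passes from $N_k^2 \lesssim K$ to $\#\Gamma \lesssim \sqrt K$ via $(\sum_k N_k)^2 \le 2m \sum_k N_k^2$ rather than summing the per-$k$ bounds directly; neither affects the argument.
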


\begin{proof}
 We actually show a stronger inequality for each maximal relevant subset $\Gamma_k$. We show that if we renumber the elements of $\Gamma_k$ by $\gamma_1, \dots, \gamma_{n_k}$ so that $|b_1| \geq |b_2| \geq \dots \geq |b_{n_k}|$, then
 \[
  \sum_{i=1}^{n_k} i|b_i| \lesssim K
 \]
 
 In fact, for all pairs $(b_i, b_j)$, where $b_i, b_j \in \Gamma_k$, we have that
 \[
  i(\gamma_i, \gamma_j) \geq \frac 15 \min\{b_i, b_j\}
 \]
 Therefore, if we relabel the elements of $\Gamma_k$ so that $|b_1| \geq |b_2| \geq \dots \geq |b_{n_k}|$, then
 \begin{align*}
  \sum_{i=1}^{n_k} i |b_i| & = \sum_{i \leq j} \min \{ |b_i|, |b_j|\} \\
   & \leq 25 \sum_{i \leq j} i(\gamma_i, \gamma_j) \\
   & \leq 625 i(\gamma, \gamma)
 \end{align*}
 where the last inequality is by Lemma \ref{lem:OverlapBound}.
 
 So if we use that $i|b_i| \geq |b_i|$, we get that
 \[
  \sum_{\Gamma_k} |b_i| \lesssim K
 \]
 for each $k$. Summing the above inequality over $\Gamma_1, \dots, \Gamma_{2m}$, we get that
 \[
  \sum_\Gamma |b_i| \lesssim K
 \]
 where the constant is $1250 m$.
 
 If we use instead that $|b_i| \geq 2$ for each interior boundary subarc, then we get $\sum_{\Gamma_k} 2i \lesssim K$. If we let
 \[
  N_k = \# \Gamma_k
 \]
 then this implies that $ N_k(N_k-1) \lesssim K$, so in particular,
 \[
  (N_k)^2 \lesssim K
 \]
 where the constant is 625. We can show by induction that $(\sum_{i=1}^n a_i)^2 \leq n \sum a_i^2$ for any sequence of numbers $a_1, \dots, a_n$. So let $N = \sum N_k$. That is, $N = \# \Gamma$. Then
 \[
  N^2 \leq 2m \sum_{k=1}^{2m}(N_k)^2
 \]
 Therefore,
 \[
  \# \Gamma \lesssim \sqrt K
 \]
 where the constant is $25 \sqrt{2m}$.
\end{proof}

\begin{rem}
\label{rem:Sumibi}
 We know that $\Gamma_k$ contains all $\gamma_i$ so that $|b_i| \geq 4$ by Corollary \ref{rem:Length4Relevant}. So the above proof actually gives us the following nice formula for $\Gamma'_k$:
 \[
  \sum_{|b_i| \geq 4, \gamma_i \in \Gamma'_k} i |b_i| \lesssim K
 \]
 where we number the elements of $\Gamma'_k$ so that $|b_1| \geq |b_2| \geq \dots \geq |b_{n_k}|$.
\end{rem}

\subsection{The maximal relevant subsets are large}
 Let $\Gamma$ be the union of maximal relevant subsets defined above. We want to show that $\Gamma$ is, in fact, quite large in the following sense.
\begin{lem}
 \label{lem:RelevantAndBridgingPercent}
Let $B$ and $\Gamma'$ be the set of bridging and interior boundary subarcs for the pair $(\gamma, \Pi)$, respectively. When $\Gamma \subset \Gamma'$ is defined as above, we have
  \[
   3\# \Gamma + 4\#B \geq  \#\Gamma'
  \]
 whenever the total number of boundary subarcs is at least 6.
 \end{lem}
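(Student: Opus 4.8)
The plan is to first pin down exactly which interior boundary subarcs lie in $\Gamma' \setminus \Gamma$, and then to charge each of them to a twisting interior subarc or to a bridging subarc that sits near it. By Corollary \ref{rem:Length4Relevant}, every interior boundary subword $b_i$ with $|b_i| \ge 4$ has $\gamma_i$ twisting, and a twisting interior subarc always forms a relevant singleton (it is a proper subarc of $\gamma$ that self-intersects, so $i(\gamma_i,\gamma_i)\ge 1\ge \tfrac15|b_i|$ once $|b_i|\le 5$, and Claim \ref{cla:Length4Relevant} with $i=j$ handles $|b_i|\ge 5$). Hence Lemma \ref{lem:UnionRelevant}, applied to singletons and then repeatedly, shows that the maximal relevant subset $\Gamma_k\subseteq\Gamma'_k$ is \emph{precisely} the set of twisting interior boundary subarcs in $\Gamma'_k$. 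Consequently $\Gamma=\bigcup_k\Gamma_k$ is the set of all twisting interior boundary subarcs, and $\Gamma'\setminus\Gamma$ is the set of all \emph{non-twisting} interior boundary subarcs; each such $\gamma_i=\alpha(b_i)$ has $|b_i|\le 3$, and a lift passing through at most three hexagons of $N_1(\tilde\beta_{j(i)})$ by Corollary \ref{rem:Pass4Hexagons}. Since $\#\Gamma'=\#\Gamma+\#(\Gamma'\setminus\Gamma)$, it suffices to prove $\#(\Gamma'\setminus\Gamma)\le 2\#\Gamma+4\#B$.

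Next I fix a curve $\beta_j\in\Pi$ together with one of its two sides and work in the universal cover with a chosen lift $\tilde\beta_j$ — equivalently in the annular cover $\tilde\S/\langle f\rangle$, where $f$ is the deck translation along $\tilde\beta_j$, in which $\tilde\beta_j$ projects to a core circle $\bar\beta_j$ and $N_2(\tilde\beta_j)$ projects to an annulus with outer circle $C$. This is where the hypothesis of at least six boundary subarcs is used: by Definition \ref{def:TwistTie} each boundary subarc is then cut out by a \emph{two}-hexagon neighborhood, so the arcs obtained on this side of $\bar\beta_j$ are long enough for the dichotomy below. Each interior subarc $\gamma_i$ on this side lifts to an arc $\bar\gamma_i$ with both endpoints on $C$; each subarc bridging $\beta_j$ lifts to an arc crossing $\bar\beta_j$ and meeting $C$. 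The subarc $\gamma_i$ is twisting exactly when $\bar\gamma_i$ fails to be an embedded, trivially-wrapping $\cap$-arc, while a non-twisting interior $\gamma_i$ cuts off an embedded sub-disk $D_i$ of the annulus with $D_i\cap\bar\beta_j=\emptyset$.

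The charging argument is then the heart of the proof. Reading the cyclic order on $C$ of the endpoints of all these lifted arcs, one separates the ``trivial'' endpoints bounding the inessential disks $D_i$ (coming from non-twisting interior subarcs) from the ``essential'' endpoints coming from twisting interior subarcs and from bridging subarcs (whose lifts reach or cross $\bar\beta_j$). The claim to establish is that, in the gap of $C$ between two consecutive essential endpoints, at most one of the disks $D_i$ can be outermost: two disjoint outermost trivial wraps in the same gap would force the piece of $\gamma$ joining them to itself wrap around $\beta_j$, hence to produce a twisting subarc or a crossing of $\beta_j$ lying in that gap, contradicting that the gap contains no essential endpoint. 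Replacing each nested family of non-twisting interior subarcs by its outermost member, and accounting for the two ends of each interfering arc, bounds the number of non-twisting interior subarcs on the chosen side of $\beta_j$ by twice the number of twisting interior subarcs on that side plus twice the number of bridging subarcs of $\beta_j$. Summing over all $\beta_j\in\Pi$ and both sides — and using that each bridging subarc crosses exactly one curve of $\Pi$, so is charged (from its two sides) against a single $\beta_j$, producing the factor $4$ — gives $\#(\Gamma'\setminus\Gamma)\le 2\#\Gamma+4\#B$, i.e.\ the desired $3\#\Gamma+4\#B\ge\#\Gamma'$.

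The main obstacle is exactly this planarity estimate: making rigorous the assertion that one cannot have two independent trivial wraps around $\beta_j$ without a twisting subarc or a $\beta_j$-crossing between them requires careful use of the fact that all of these arcs are subarcs of the \emph{single} geodesic $\gamma$ (so they are joined up outside the two-hexagon neighborhood) together with the strengthening that the two-hexagon neighborhood provides over the one-hexagon one, and tracking the constants $2$ and $4$ through the nesting and through the two endpoints of each arc is the fiddly part.
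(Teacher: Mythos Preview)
Your proposal has a genuine gap in the charging step, and it stems from organizing the argument around each $\beta_j$ rather than along the curve $\gamma$.

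Concretely: you group the interior subarcs by which side of which $\beta_j$ they lie on, pass to the annular cover, and then assert that between two consecutive ``essential'' endpoints on the outer circle $C$ there can be at most one outermost non-twisting $\cap$-arc, because ``the piece of $\gamma$ joining them'' would otherwise have to wrap or cross. But two non-twisting interior subarcs $\gamma_i,\gamma_j\in\Gamma'_k$ need not be anywhere near each other in the cyclic word $b_1s_1\cdots b_ns_n$; the portion of $\gamma$ between them (as a curve) can leave the $2$-hexagon neighborhood of $\beta_j$ entirely, wander through many other pairs of pants, and return. In the annular cover these different visits lift to \emph{disjoint, unconnected} arcs, so there is no ``piece of $\gamma$ joining them'' inside the picture you are analyzing, and nothing forces a twisting subarc or a crossing of $\beta_j$ to appear in the gap. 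The same problem kills the nesting reduction: even granting one outermost disk per gap, you have given no bound on the number of nested non-twisting arcs, and nested $\cap$-arcs in the annulus can be pairwise disjoint, so no intersections are produced.

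The paper's proof avoids this by working with the \emph{sequential} structure of $w_\Pi(\gamma)$ instead of the spatial structure around a fixed $\beta_j$. The key step (Claim \ref{cla:13Relevant}) is local along $\gamma$: whenever five \emph{consecutive} boundary subarcs $\gamma_{i-2},\dots,\gamma_{i+2}$ are all interior, one of the middle three is twisting and relevant. This is proved by a short case analysis in the universal cover of the single pair of pants containing $s_{i-3}b_{i-2}\cdots b_{i+2}s_{i+2}$, using the $2$-hexagon neighborhoods to ensure $\gamma_{i-1}\cup\gamma_i\cup\gamma_{i+1}$ stays in that pair of pants. One then chops the cyclic sequence $(\gamma_1,\dots,\gamma_n)$ into maximal runs of interior subarcs---there are exactly $\#B$ such runs---and applies the claim inside each run to get at least $\tfrac13(\text{run length}-4)$ elements of $\Gamma$, which sums to $\tfrac13(\#\Gamma'-4\#B)\le\#\Gamma$. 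The point is that consecutive boundary subarcs genuinely are joined by a short piece of $\gamma$ inside one pair of pants, which is exactly the connectivity your annular-cover argument was missing.
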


 \begin{proof}

 Let $w_\Pi(\gamma) = b_1 s_1 \dots b_n s_n$ with $n \geq 6$. As usual, $\gamma_i = \alpha(b_i)$ is the boundary subarc associated to the boundary subword $b_i$. 
 
 \begin{cla}
 \label{cla:13Relevant}
If $\gamma_{i-2}, \gamma_{i-1}, \gamma_i, \gamma_{i+1}$ and $\gamma_{i+2}$ are all interior boundary subarcs, then one of $\{\gamma_{i-1}\}, \{\gamma_i\}$ and $\{\gamma_{i+1}\}$ must be relevant and twisting.  
 \end{cla}

 Given this claim, the proof of the lemma goes as follows: The union of twisting, relevant subsets of $\Gamma'_j$ is again twisting and relevant for each $j$. So, the set $\Gamma$ is exactly the union of all twisting and relevant singleton sets $\{\gamma_i\} \subset \Gamma'$. If we count such singleton sets, we get the size of $\Gamma$.
 
 Consider the (cyclic) sequence $(\gamma_1, \dots, \gamma_n)$ of boundary subarcs. We can break it up into  maximal subsequences of interior boundary subarcs: there are $\#B$ such sequences. Suppose after cyclic renumbering that $\gamma_1, \dots, \gamma_i$ is one such maximal sequence of interior boundary subarcs. Then Claim \ref{cla:13Relevant} implies that at least $\frac 13(i-4)$ of these boundary subarcs are relevant and twisting. So we get that
  \[
  \frac 13 (\# \Gamma' - 4 \# B) \leq \# \Gamma
 \]
This gives us Lemma \ref{lem:RelevantAndBridgingPercent} assuming Claim \ref{cla:13Relevant}.
\end{proof}
 
\begin{proof}[Proof of Claim \ref{cla:13Relevant}.]

We have that $\gamma_{i-2}, \dots, \gamma_{i+2}$ are all interior boundary subarcs. Thus, there is a pair of pants, $\p$, cut out by the pants decomposition $\Pi$, so that the subword $s_{i-3}b_{i-2} \dots b_{i+2} s_{i+2}$ is entirely contained in $\p$. That is, this subword doesn't cross $\partial \p$. Each boundary subarc $\gamma_j$ is defined in terms of the 2-hexagon neighborhood of a lift of the curve in $\Pi$ that contains $b_j$. Thus, the union $\gamma_{i-1} \cup \gamma_i \cup \gamma_{i+1}$ is entirely contained in $\p$, but $\gamma_{i+2}$ and $\gamma_{i-2}$ might cross $\partial \p$. This is why we only work with $\gamma_{i-1}, \gamma_i$ and $\gamma_{i+1}$ from now on.

Take a lift $\tilde \gamma$ of $\gamma$ to $\tilde \S$. Choosing a lift of the subarc $\gamma_{i-1} \cup \gamma_i \cup \gamma_{i+1}$ to $\tilde \gamma$ gives us lifts $\tilde \beta_j$ for $j=i-1,i,i+1$ of curves in $\Pi$ so that
\[
 \tilde \gamma_j = \tilde \gamma \cap N_2(\tilde \beta_j)
\]

By Corollary \ref{rem:Pass4Hexagons}, if $\tilde \gamma_i$ passes through four hexagons adjacent to $ \tilde \beta_i$, then $\gamma_i$ is twisting and the singleton set $\{\gamma_i\}$ is relevant. If this is the case, then we are done. So assume that $\tilde \gamma_i$ passes through at most three hexagons adjacent to $\tilde \beta_i$, as in Figure \ref{fig:Pass3}. 

In all the figures for this proof, we only draw the hexagon decomposition of the pair of pants $\p$ lifted to a partial hexagonal tiling of $\tilde \S$. This is because $\gamma_{i-1} \cup \gamma_i \cup \gamma_{i+1}$ lies in $\p$.

\begin{cla}
\label{cla:Pass3}
 The arc $\tilde \gamma_i$ can pass through at no fewer than 3 hexagons in $N_1(\tilde \beta_i)$.
\end{cla}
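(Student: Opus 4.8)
The plan is to bound from below the number of hexagons of $N_1(\tilde\beta_i)$ crossed by $\tilde\gamma_i$ in terms of $|b_i|$, and then to deal separately with the single case where that bound is too weak. First I would recall the mechanism from the proof of Claim \ref{cla:Length4Relevant}: every edge of $\tilde p(\gamma)$ lies on the common boundary of two hexagons, and $\tilde\gamma$ must pass through at least one of those two. Applying this to each of the $|b_i|$ boundary edges making up $b_i$, and using that $\gamma_i$ is interior — so $\tilde\gamma_i$ stays on one fixed side of $\tilde\beta_i$, hence the hexagon forced at each edge is the one on that side, and these are pairwise distinct — one sees that $\tilde\gamma_i$ meets at least $|b_i|$ hexagons of $N_1(\tilde\beta_i)$. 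Since $b_i$ is an interior boundary subword, $|b_i|\ge 2$, and if $|b_i|\ge 3$ the claim is immediate. So the entire content of the claim is the case $|b_i|=2$.

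In that case, write $b_i=e_1e_2$ with $e_1,e_2$ consecutive edges of $\tilde\beta_i$ on the side of the pair of pants $\tilde\p$ containing $s_{i-1}b_is_i$, and label the hexagons of $\tilde\p$ along $\tilde\beta_i$ as $\dots,h_0,h_1,h_2,h_3,\dots$ so that $e_j\subset\partial h_j$; then $s_{i-1}$ is the common edge of $h_0,h_1$ and $s_i$ the common edge of $h_2,h_3$, and by the first step $\tilde\gamma_i$ meets $h_1$ and $h_2$. I want a third hexagon, which must be $h_0$ or $h_3$. For this I would use the neighbouring subarcs: $\gamma_{i-1}$ and $\gamma_{i+1}$ are interior boundary subarcs lying, respectively, on the curve $\tilde\beta'$ of $\tilde\p$ joined to $\tilde\beta_i$ along $s_{i-1}$ and on the curve of $\tilde\p$ joined along $s_i$, with $|b_{i-1}|,|b_{i+1}|\ge 2$. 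Thus $\tilde\gamma$ runs alongside $\tilde\beta'$ immediately before entering the ``bump'' $b_i$ on $\tilde\beta_i$; the only hexagons adjacent to both $\tilde\beta_i$ and $\tilde\beta'$ are $h_0$ and $h_1$, so as $\tilde\gamma$ passes from hugging $\tilde\beta'$ to hugging $\tilde\beta_i$ it must run through whichever of $h_0,h_1$ carries the last edge of $b_{i-1}$ — if that is $h_0$, then $\tilde\gamma_i$ meets $h_0,h_1,h_2$ and we are done. Symmetrically, if the first edge of $b_{i+1}$ lies on $h_3$, then $\tilde\gamma_i$ meets $h_1,h_2,h_3$.

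The one residual possibility is that the last edge of $b_{i-1}$ lies on $h_1$ and the first edge of $b_{i+1}$ lies on $h_2$. In that configuration the stretch $b_{i-1}s_{i-1}b_i$ ends with three consecutive edges on $\partial h_1$ and $b_is_ib_{i+1}$ begins with three consecutive edges on $\partial h_2$, which I would rule out by pushing $\tilde p(\gamma)$ across $h_1\cup h_2$ — replacing $s_{i-1}b_is_i$ by a path through the two ``far'' seams of $h_1$ and $h_2$ — and checking that this either lowers the number of boundary subwords or otherwise contradicts the fact that $w_\Pi(\gamma)$ was chosen with fewest boundary subwords; alternatively, one can invoke the standing hypothesis of the proof of Claim \ref{cla:13Relevant}, namely that $\tilde\gamma_i$ meets at most three hexagons of $N_1(\tilde\beta_i)$, to exclude the ``exactly two hexagons'' case directly. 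This last step is the hard part, and the one I expect to be the real obstacle: the naive edge- and seam-counting is genuinely consistent with only two hexagons, so one must exploit either the interiority and length of the neighbouring subwords $b_{i\pm1}$ or the minimality built into the definition of $w_\Pi(\gamma)$, and getting the bookkeeping of the exclusion right — precisely which edge of $\tilde\beta'$ the subword $b_{i-1}$ ends on, and how the induced homotopy changes the word — is where the care is needed.
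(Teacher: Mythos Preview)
Your reduction to the case $|b_i|=2$ with $\tilde\gamma_i$ meeting exactly two hexagons $h_1,h_2$ is fine and matches the paper. The difficulty is your handling of this residual case, and here both of your proposed fixes fail.

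For the ``push across $h_1\cup h_2$'' idea: carry it out. In your residual configuration the stretch (last edge of $b_{i-1}$)$\,s_{i-1}\,e_1\,e_2\,s_i\,$(first edge of $b_{i+1}$) wraps three edges of $\partial h_1$ and then three edges of $\partial h_2$. Replacing each triple by the opposite three edges and cancelling the back-tracking along the seam between $h_1$ and $h_2$ leaves a word with boundary subwords (rest of $b_{i-1}$), then a length--two subword on the third curve $\tilde\beta_C$ shared by $h_1$ and $h_2$, then (rest of $b_{i+1}$). That is again three boundary subwords, so minimality of $w_\Pi(\gamma)$ gives no contradiction. Your second alternative---invoking the standing hypothesis ``at most three hexagons''---is the wrong direction: an upper bound of three cannot rule out exactly two.

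The paper avoids the neighbouring subwords entirely and gives a one-step contradiction. If $\tilde\gamma_i$ meets only $h_1$ and $h_2$, then (since $\gamma_i$ stays in $\p$ and does not cross $\tilde\beta_i$) it can only cross seam edges; but the two seams of $h_1\cup h_2$ that touch $\tilde\beta_i$ lead into $h_0$ and $h_3$, so $\tilde\gamma_i$ must enter and exit through the two ``far'' seams, both of which are incident to the common third curve $\tilde\beta_i'$. Together with the middle seam, $\tilde\gamma$ thus crosses three consecutive seams incident to $\tilde\beta_i'$, and the construction of $p(\gamma)$ then forces the corresponding boundary subword to lie on $\tilde\beta_i'$ rather than on $\tilde\beta_i$---contradicting $b_i\subset\tilde\beta_i$. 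This is exactly the symmetry your push-across move exposes, but resolved by appealing to how $p(\gamma)$ is built rather than to minimality of the number of subwords.
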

\begin{proof}

Because $b_i$ is an interior boundary subword, we know that $|b_i| \geq 2$. Thus, $\tilde \gamma_i$ passes through at least two hexagons in $N_1(\tilde \beta_i)$. Suppose that $|b_i| = 2$ and $\tilde \gamma_i$ passed through exactly 2 hexagons, $h_1$ and $h_2$ (Figure \ref{fig:Pass2}). Because $\gamma_i$ is an interior boundary subarc, $h_1$ and $h_2$ lie on the same side of $\tilde \beta_i$. Then the boundaries of $h_1$ and $h_2$ have exactly two lifts of curves in $\Pi$ in common. One is $\tilde \beta_i$ and the other is some curve $\tilde \beta_i'$. 

\begin{figure}[h!]
 \centering 
 \includegraphics{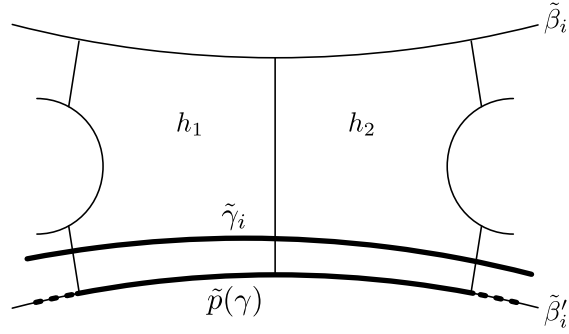}
 \caption{The case when $\tilde \gamma$ passes through just two hexagons in $N_1(\tilde \beta_i)$.}
 \label{fig:Pass2}
\end{figure}

Furthermore, $h_1$ and $h_2$ have three seam edges adjacent to $\tilde \beta_i'$. We have that $\tilde \gamma_i$ only crosses $N_1(\tilde \beta_i)$ in $h_1$ and $h_2$, and that it only crosses seam edges. So it must cross all three of the seam edges adjacent to $\tilde \beta_i'$. By the construction of $p(\gamma)$, this implies that $\tilde p(\gamma)$ must, in fact, lie on $\tilde \beta_i'$ and not $\tilde \beta_i$. This gives us a contradiction.

\end{proof}

By definition, $\tilde \gamma_i$ lives in $N_2(\tilde \beta_i)$. Label the hexagons it passes through in order. So their labels are $h_0, h_1, h_2, h_3, h_4$, where $h_1, h_2$ and $h_3$ lie in $N_1(\tilde \beta_i)$ and $h_0$ and $h_4$ do not. 
  
  \begin{figure}[h!]
   \centering 
   \includegraphics{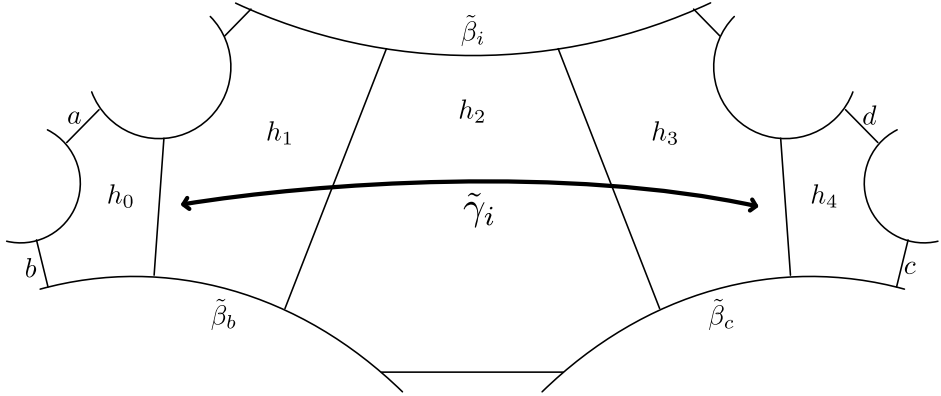}
   \caption{Part of $\tilde \S$ that contains $\tilde \gamma_i$.}
   \label{fig:Pass3}
  \end{figure}
  
Let $a$ and $b$ be the seam edges of $h_0$, and let $c$ and $d$ be the seam edges of $h_4$, that do not lie on $N_1(\tilde \beta)$ (Figure \ref{fig:Pass3}). Note that $b$ and $c$ are the seam edges adjacent to geodesics $\tilde \beta_b$ and $\tilde \beta_c$ that bound hexagon $h_2$. 

Because $\gamma_{i-1} \cup \gamma_i \cup \gamma_{i+1}$ lies on the interior of the pair of pants $\p$, the endpoints on $\gamma_i$ lie on seam edges. There are two cases: either the endpoints of $\tilde \gamma_i$ lie on $a$ and $d$, or at least one endpoint of $\tilde \gamma_i$ lies on $b$ or $c$.

Suppose $\tilde \gamma_i$ has an endpoint on $a$ and an endpoint on $d$. Consider the deck transformation $f$ that acts by translation along $\tilde \beta_i$ with translation length $l(\beta_i)$. Up to taking the inverse of $f$, we must have $f(h_1) = h_3$. So, $f(a) = c$.

Let $H = h_0 \cup \dots \cup h_4$ be the union of the five hexagons. If $e$ is the second seam edge that $\tilde \gamma_i$ hits as it goes through $H$, then $f(e)$ must be the seam edge on the boundary of $H$ as in Figure \ref{fig:Pass3ad}. Since $H$ is convex, and the pairs $a, d$ and $c, f(e)$ separate each other on $\partial H$, we must have
\[
 \tilde \gamma_i \cap f(\tilde \gamma_i) \neq \emptyset
\]
Therefore, $\gamma_i$ is twisting and $i(\gamma_i, \gamma_i) \geq 1$. As $|b_i| \leq 3$, this implies that $\{\gamma_i\}$ is relevant. 

  \begin{figure}[h!]
   \centering 
   \includegraphics{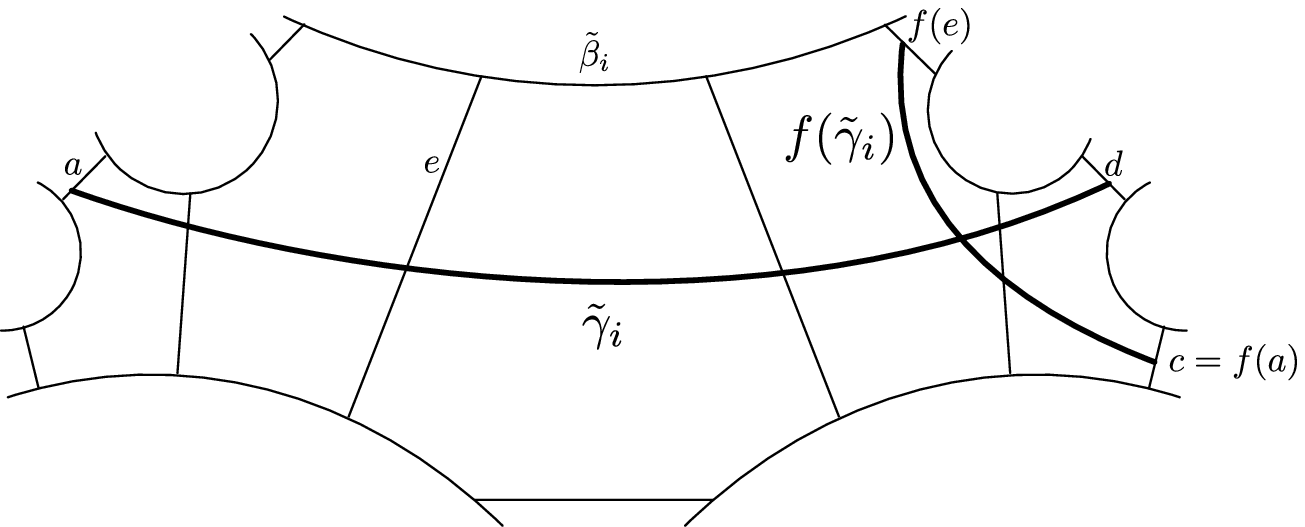}
   \caption{The case when $\tilde \gamma_i$ joins seams $a$ and $d$.}
   \label{fig:Pass3ad}
  \end{figure}

Now we consider the other case, where $\tilde \gamma_i$ has an endpoint on either $b$ or $c$ (Figure \ref{fig:Pass3bc}). Suppose without loss of generality that $\tilde \gamma_i$ has an endpoint on $b$. Thus, $\tilde \gamma$ continues past $b$ into another hexagon adjacent to $\tilde \beta_b$. So $\tilde \gamma$ must pass through at least 4 hexagons in $N_1(\tilde \beta_b)$.

\begin{figure}[h!]
 \centering 
 \includegraphics{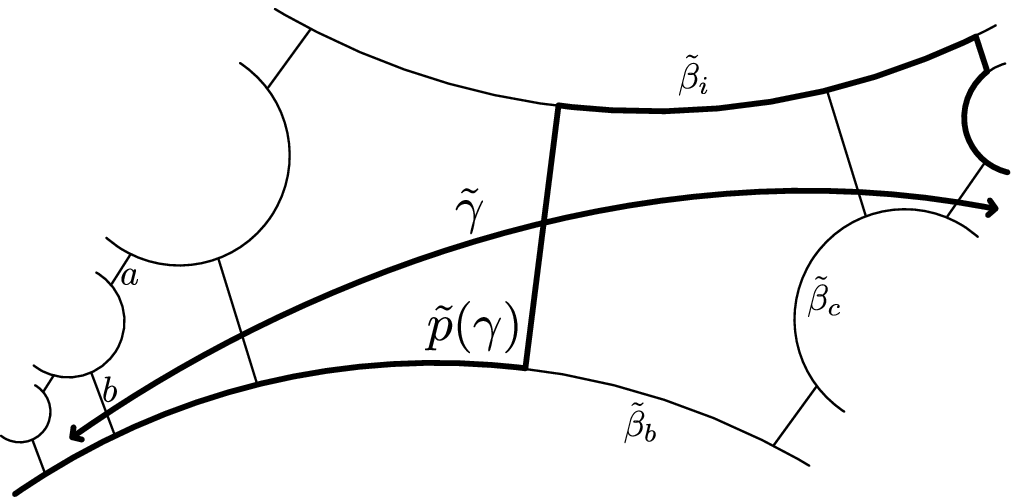}
 \caption{The case where $\tilde \gamma_i$ has an endpoint on seam $b$.}
 \label{fig:Pass3bc}
\end{figure}

Because $\tilde \gamma$ passes through four hexagons in $N_1(\tilde \beta_b)$, it must pass through at least three seam edges adjacent to $\tilde \beta_b$.
The construction of $p(\gamma)$ is such that if $\tilde \gamma$ passes through three seam edges adjacent to the lift $\tilde \beta_b$ of $\beta_b \in \Pi$, then $p(\gamma)$ has a boundary subword $b_j$ with a lift $\tilde b_j$ lying on $\tilde \beta_b$. Since $\tilde \beta_b$ and $\tilde \beta_i$ are joined by a seam edge, we must have $\tilde \beta_b = \tilde \beta_{i-1}$ or $\tilde \beta_{i+1}$. So, in fact, $\tilde \gamma$ passes through at least four hexagons adjacent to one of $\tilde \beta_{i-1}$ or $\tilde \beta_{i+1}$.  By Corollary \ref{rem:Length4Relevant}, this implies that either $\gamma_{i+1}$ or $\gamma_{i-1}$, respectively, is twisting and the singleton set containing this boundary subarc is relevant.

\end{proof}

\subsection{Proof of Lemma \ref{lem:InteriorBdrySubwordIntBound}}
\label{sec:ProofInteriorBdrySubwordIntBound}

\begin{proof}[Proof of Lemma \ref{lem:InteriorBdrySubwordIntBound}]
Recall that $\Gamma_k \subset \Gamma_k'$ is the maximal relevant subset defined above, for each $k = 1, \dots, 2m$, and that $\Gamma = \cup \Gamma_k$. By Lemma \ref{lem:MaxRelevantBdrySubwordIntBound}, we have that
\[
 \sum_{\Gamma}|b_i| \lesssim K \text{ and } \# \Gamma \lesssim \sqrt K
\]

First we use this to bound the size of $\Gamma'$ from below. If the pair $(\gamma, \Pi)$ has at least 6 boundary subwords, then Lemma \ref{lem:RelevantAndBridgingPercent} gives us that
  \[
   3\# \Gamma + 4\#B \geq  \#\Gamma'
  \]
We assumed that $i(\gamma, \Pi) \leq c_\S \sqrt K$. So by Proposition \ref{prop:BridgingIsCrossing}, $4 \# B \leq 4c_\S \sqrt K$. Since we have $\# \Gamma \lesssim \sqrt K$, this implies that
\[
 \# \Gamma' \lesssim \sqrt K
\]
where the constant is $25\sqrt{2m} + 4 c_\S$.

If the pair $(\gamma, \Pi)$ has has fewer than 6 boundary subwords, then $\# \Gamma' \leq 5$. As $K \geq 1$, this means $\# \Gamma' \lesssim \sqrt K$ with constant 5. So the above bound still holds.

Next we bound the total length of all interior boundary subwords from above. To do this, we need to bound the total length of all those interior boundary subwords whose corresponding subarcs are not in $\Gamma'$. By Corollary \ref{rem:Length4Relevant}, if $b_i$ is an interior boundary subword with $|b_i| \geq 4$, then $\gamma_i \in \Gamma$. So,
\begin{align*}
 \sum_{\Gamma' \setminus \Gamma} |b_i| & \leq \sum_{\Gamma' \setminus \Gamma} 4 \\
  &\lesssim \sqrt K
\end{align*}
where the last inequality comes from the fact that $\# \Gamma' \lesssim \sqrt K$, and the constant is $100\sqrt{2m} + 16c_\S$. As $\sqrt K \leq K$ for all $K \geq 1$, we can combine this inequality with Lemma \ref{lem:MaxRelevantBdrySubwordIntBound} to get that
\[
 \sum_{\Gamma'} |b_i| \lesssim K
\]
where the constant is $1450 m + 16c_\S$.
\end{proof}

\section{Proof of Proposition \ref{prop:BdrySubwordIntBound}}
\label{sec:ProofBdrySubwordIntBound}

Let $\gamma \in \G^c(L,K)$, for $L,K > 0$. By Lemma \ref{lem:BridgingBdrySubwordIntBound}, there is a $\gamma' \in \Mod_\S \cdot \gamma$ so that if $B$ is the set of bridging boundary subarcs of the pair $(\gamma', \Pi)$, then
\[
 \sum_{\gamma_i \in B} |b_i| \lesssim K \mbox{ and } \# B \leq c_\S \sqrt K
\]
Since $\# B \leq c_\S \sqrt K$, we have that $i(\gamma', \Pi) \leq c_\S \sqrt K$ by Proposition \ref{prop:BridgingIsCrossing}. So we can apply Lemma \ref{lem:InteriorBdrySubwordIntBound}. Thus, if $\Gamma'$ is the set of interior boundary subarcs of the pair $(\gamma', \Pi)$, then
\[
 \sum_{\gamma_i \in \Gamma'} |b_i| \lesssim K \mbox{ and } \# \Gamma' \lesssim \sqrt K
\]
The set of all boundary subarcs of the pair $(\gamma', \Pi)$ is exactly $B \cup \Gamma'$. Let $w_\Pi(\gamma') = b_1s_1 \dots b_n s_n$. In particular, there are $n$ boundary subwords. Therefore,
\[
 \sum_{i=1}^n |b_i| \lesssim K \mbox{ and } n \lesssim \sqrt K
\]
The right-hand inequality follows from the fact that $a^2 + b^2 \leq 2(a+b)^2$. 

So we have found a curve $\gamma' \in \Mod_\S \cdot \gamma$ and a pants decomposition $\Pi$ for which the inequalities in Proposition \ref{prop:BdrySubwordIntBound} hold. As explained in Section \ref{sec:PantsNotOnTheList}, this implies that there is some other $\gamma'' \in \Mod_\S \cdot \gamma$ and some pants decomposition $\Pi_i$ on the list of $\Mod_\S$ representatives of pants decompositions of $\S$, so that Proposition \ref{prop:BdrySubwordIntBound} holds for the pair $(\gamma'', \Pi_i)$. This completes the proof of Proposition \ref{prop:BdrySubwordIntBound}.


\section{Length bound}
\label{sec:LengthBounds}
We also want to relate the length of $\gamma$ to the length of the word $w_\Pi(\gamma)$ for our nice pants decomposition $\Pi$.

Given $\gamma \in \G^c(L,K)$, we found a curve $\gamma' \in \Mod_\S \cdot \gamma$ and a pants decomposition $\Pi$ that satisfied the intersection number conditions in Proposition \ref{prop:BdrySubwordIntBound}. That is, if $w_\Pi(\gamma') = b_1s_1 \dots b_ns_n$, then $\sum |b_i| \lesssim K$ and $n \lesssim \sqrt K$. We now get a condition on $w_\Pi(\gamma')$ in terms of $L$.

\begin{lem}
\label{lem:BdrySubwordLKBounds}
Let $\gamma \in \G^c(L,K)$. Then there is a curve $\gamma'\in \Mod_\S \cdot \gamma$ and a pants decomposition $\Pi$ that satisfy the conditions of Proposition \ref{prop:BdrySubwordIntBound} and so that if $w_\Pi(\gamma') = b_1s_1 \dots b_n s_n$, then
 \[
  |w_\Pi(\gamma')| \leq c_X L + c_\S \sqrt K
 \]
 where $c_X$ depends only on the metric $X$ and $c_\S$ depends only on $\S$.
\end{lem}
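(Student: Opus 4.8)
The plan is to prove the bound for the pair $(\gamma',\Pi)$ produced inside the proof of Proposition \ref{prop:BdrySubwordIntBound}, working on the ``un‑transported'' side: $\Pi$ is the pants decomposition of Proposition \ref{prop:NicePants}, so $i(\gamma,\Pi)=i(\gamma',\Pi)\lesssim\sqrt K$, and $\gamma'=f\cdot\gamma$ with $f=\tau_1^{n_1}\cdots\tau_m^{n_m}$ the product of Dehn twists about curves of $\Pi$ chosen in Claim \ref{cla:ApplyDehnTwists}. Since the relabelling of Section \ref{sec:PantsNotOnTheList} is an isomorphism of words it preserves word length, so a bound on $|w_\Pi(\gamma')|$ transfers verbatim to the pair on the representative list that Proposition \ref{prop:BdrySubwordIntBound} hands us. Thus it suffices to bound $|w_\Pi(\gamma')|$, and we may assume $\gamma$ is non‑simple, so $K\ge 1$ and $\sqrt K\le K$.

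The first and main step is a purely geometric estimate on the original curve: $|w_\Pi(\gamma)|\lesssim c_X L+c_\S\sqrt K$. Write $w_\Pi(\gamma)=b_1s_1\cdots b_ns_n$ and let $\gamma_i=\alpha(b_i)\subseteq\gamma$ be the boundary subarc associated with $b_i$, lying near a curve $\beta\in\Pi$. As recorded in the construction (cf.\ Inequality (\ref{eq:biLessThanDelta}) and the proof of Claim \ref{cla:Length4Relevant}), in the universal cover $\tilde\gamma_i$ enters and leaves the one‑hexagon neighbourhood of $\tilde\beta$ at hexagons whose indices differ by $\Delta_i$ with $|b_i|\le|\Delta_i|+1$, and $\tilde\gamma_i$ shadows an arc of $\tilde\beta$ of length about $|\Delta_i|\,l(\beta)/2$; since $\tilde\gamma_i$ stays within a bounded (in terms of $X$) distance of $\tilde\beta$, the triangle inequality gives $l(\gamma_i)\gtrsim|\Delta_i|\,l(\beta)-O_X(1)\ge|\Delta_i|\,\mathrm{sys}(X)-O_X(1)$, hence $|b_i|\lesssim_X l(\gamma_i)+O(1)$. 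By the proof of Lemma \ref{lem:OverlapBound} every point of $\gamma$ lies in at most five boundary subarcs, so $\sum_i l(\gamma_i)\le 5\,l(\gamma)\le 5L$, while $n\lesssim\sqrt K$ by Proposition \ref{prop:BdrySubwordIntBound} (using $i(\gamma,\Pi)\lesssim\sqrt K$ and Lemma \ref{lem:InteriorBdrySubwordIntBound}). Summing, $\sum_i|b_i|\lesssim_X L+c_\S\sqrt K$, so $|w_\Pi(\gamma)|=\sum|b_i|+n\lesssim c_X L+c_\S\sqrt K$.

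The second step passes from $\gamma$ to $\gamma'=f\cdot\gamma$. By the proof of Claim \ref{cla:ApplyDehnTwists}, $w_\Pi(\gamma')$ has exactly the same interior boundary subwords as $w_\Pi(\gamma)$ — applying Dehn twists about curves of $\Pi$ changes only bridging subwords — and the bridging subword of $\gamma'$ on $\beta_l$ arising from $b_i$ has twisting parameter $\sigma_i|b_i|+2n_l$. Hence the total bridging length of $\gamma'$ is at most $\sum_{l=1}^m\sum_{\gamma_i\in B_l}\bigl(|b_i|+2|n_l|\bigr)=\bigl(\text{total bridging length of }\gamma\bigr)+\sum_{l=1}^m 2|n_l|\,\#B_l$. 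Inequality (\ref{eq:BoundWordTwists}) gives $2|n_l|\,\#B_l\le 3c_\S\sqrt K+\sum_{\gamma_i\in B_l}|b_i|$, and summing over the $m=3g-3+n$ curves of $\Pi$ and invoking Step 1 bounds the right‑hand side by $\lesssim c_X L+c_\S\sqrt K$. Adding the unchanged interior part (which is at most $|w_\Pi(\gamma)|\lesssim c_X L+c_\S\sqrt K$) and the $n\lesssim\sqrt K$ seam edges yields $|w_\Pi(\gamma')|\lesssim c_X L+c_\S\sqrt K$, and transporting to the representative list via Section \ref{sec:PantsNotOnTheList} finishes the proof.

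The main obstacle is Step 1, and inside it the conversion of the combinatorial datum $|b_i|$ (how many hexagon edges of $\beta$ the subarc $\gamma_i$ runs alongside) into a genuine lower bound for $l(\gamma_i)$: one must verify that $\gamma_i$ fellow‑travels $\beta$ for a definite fraction of its combinatorial length and then feed in the systole bound $l(\beta)\ge\mathrm{sys}(X)$, checking that the additive $O_X(1)$ errors, summed over the $n\lesssim\sqrt K$ subarcs, are absorbed by the $\sqrt K$ term. Step 2 is only bookkeeping with the twisting exponents $n_l$ already constrained in Claim \ref{cla:ApplyDehnTwists}.
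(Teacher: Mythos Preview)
Your two–step strategy matches the paper's: first bound $|w_\Pi(\gamma)|$ for the original curve (which has length $\le L$), then pass to $\gamma'=f\cdot\gamma$. Step 2 is fine, though the paper does it more cheaply: rather than keep the $f$ from Claim \ref{cla:ApplyDehnTwists} and track the exponents $n_l$, it simply re-chooses $f$ to be the product of Dehn twists minimizing the total bridging word length. Since that minimum is at most the value achieved by the $f$ of Lemma \ref{lem:BridgingBdrySubwordIntBound} (hence $\lesssim K$), and since Dehn twists about $\Pi$ leave the interior subwords and $n$ untouched, the new $f\cdot\gamma$ still satisfies Proposition \ref{prop:BdrySubwordIntBound} and one gets $|w_\Pi(f\cdot\gamma)|\le|w_\Pi(\gamma)|$ for free.

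The real issue is in Step 1. Your inequality $l(\gamma_i)\gtrsim|\Delta_i|\,l(\beta)-O_X(1)$ is argued via ``$\tilde\gamma_i$ stays within a bounded (in terms of $X$) distance of $\tilde\beta$'', but that distance is governed by the seam edges of the hexagon decomposition of $\Pi$, and $\Pi$ varies with $\gamma$; there is no reason the width of $N_k(\tilde\beta)$ is uniform in $X$ alone. The triangle-inequality step then loses its $O_X(1)$ control. The paper sidesteps this by passing to the \emph{right-angled} hexagon decomposition $H_\perp$ of the same pants: because its seam edges meet $\Pi$ at right angles, any segment of $\gamma$ joining two such seams has length at least the intervening boundary edge, i.e.\ $\ge\tfrac12\,l_X$. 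Segments with an endpoint on a boundary edge are in bijection (up to a factor $2$) with points of $\gamma\cap\Pi$, hence there are at most $2c_\S\sqrt K$ of them. Counting segments gives $|w_\Pi(\gamma)|\le 9n\le\frac{18L}{l_X}+18c_\S\sqrt K$, which is exactly the bound you want, obtained without any fellow-travelling estimate. If you want to salvage your route, you would need to compare crossings of the $H$-seams with crossings of the perpendiculars to $\tilde\beta$ at the same basepoints and show these counts differ by $O(1)$ per subarc; that is essentially the paper's $m\le 3n$ comparison between $H$ and $H_\perp$.
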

Note that the constant $c_\S$ in this lemma is actually 18 times the constant $c_\S$ from Proposition \ref{prop:NicePants}.
\begin{proof}
First, we show that if $\gamma \in \G^c(L,K)$ for $L,K > 0$, and if $\Pi$ is a pants decomposition of $\S$ so that $i(\gamma, \Pi) \leq c_\S \sqrt K$, then
\[
 |w_\Pi(\gamma)| \leq \frac{18L}{l_X} + 18 c_\S \sqrt K
\]
where $l_X$ is the length of the systole in $X$. Note that this does not quite complete the proof, as there is no guarantee that the curve $\gamma' \in \Mod_\S \cdot \gamma$ for which Proposition \ref{prop:BdrySubwordIntBound} holds also has length at most $L$.

\begin{figure}[h!]
 \centering 
 \includegraphics{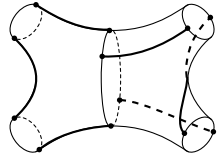}
 \caption{The hexagon decomposition $H_\perp$ by right-angled hexagons.}
 \label{fig:PerpHexagonDecomposition}
\end{figure}

The pants decomposition $\Pi$ cuts $\S$ into pair of pants. Further cut each pair of pants into \textit{right-angled} hexagons (Figure \ref{fig:PerpHexagonDecomposition}). Once again, these hexagons have boundary edges that lie on curves in $\Pi$, and seam edges that join curves in $\Pi$ together. Let $H_\perp$ be the set of right-angled hexagons we obtain. 

Then $H_\perp$ cuts $\gamma$ into segments, which are maximal subarcs of $\gamma$ lying entirely in a single hexagon. Let $n$ be the number of segments of $\gamma$ with respect to $H_\perp$. Then we claim that
\[
 |w_\Pi(\gamma)| \leq 9n
\]
In fact, let $H$ be the hexagon decomposition of $\S$ used to define $w_\Pi(\gamma)$. The hexagons in $H$ are not right-angled, because their seems are forced to match up across curves in $\Pi$. But their seam edges are chosen to be as short as possible. So if $m$ is the number of segments of $\gamma$ with respect to $H$, then $m \leq 3n$. Let $p(\gamma)$ be the closed curve formed by concatenating the edges in $w_\Pi(\gamma)$. There is a homotopy between $\gamma$ and $p(\gamma)$ that sends each segment of $\gamma$ with respect to $H$ to a subarc of at most three edges in $p(\gamma)$. Therefore, $|w_\Pi(\gamma)| \leq 3m$, and so $|w_\Pi(\gamma)| \leq 9n$.

This means we need to bound the number $n$ in terms of $l(\gamma)$.

\begin{figure}[h!]
 \centering 
 \includegraphics{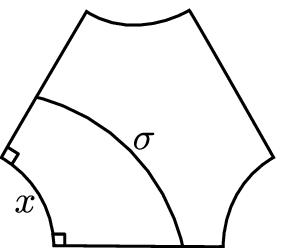}
 \caption{A segment $\sigma$ connects two seam edges incident to a boundary edge $x$.}
 \label{fig:ConnectSeams}
\end{figure}

Let $\sigma$ be a segment of $\gamma$ with respect to $H_\perp$. Suppose $\sigma$ joins two seam edges of some hexagon $h \in H_\perp$ (Figure \ref{fig:ConnectSeams}). Those two seam edge are connected by a boundary edge $x$ of $h$. Since the seam edges meet $x$ at right angles, we have that $l(\sigma) \geq l(x)$. But $l(x)$ is half the length of a curve in $\Pi$, and thus at least half the systole length of $X$. Therefore,
\[
 l(\sigma) \geq \frac 12 l_X
\]
Since the total length of $\gamma$ is at most $L$, this means there are at most $\frac{2L}{l_X}$ segments that join seam edges.

Now consider the set of segments that have at least one endpoint on the boundary edge of some hexagon $h \in H_\perp$. If $\sigma$ is such a segment, then it could join a seam edge to an adjacent boundary edge. Thus, $l(\sigma)$ can be arbitrarily small. However, each intersection between $\gamma$ and $\Pi$ corresponds to exactly two such segments. As $i(\gamma, \Pi) \leq c_\S \sqrt K$, the number of segments that touch a boundary edge is at most $2 c_\S \sqrt K$. (Note that we are overcounting segments that join two boundary edges together by a factor of 2.)

Therefore, the total number $n$ of segments of $\gamma$ with respect to $H_\perp$ is bounded above by
\[
 n \leq \frac{2L}{l_X} + 2 c_\S \sqrt K
\]
So we get
\[
 |w_\Pi(\gamma)| \leq \frac{18L}{l_X} + 18 c_\S \sqrt K
\]

 Now let $f$ be a composition of Dehn twists about curves in $\Pi$ so that the total length of bridging boundary subwords of $w_\Pi(f \cdot \gamma)$ is as small as possible. So by Lemma \ref{lem:BridgingBdrySubwordIntBound}, the total length of bridging boundary subwords in the pair $(f \cdot \gamma, \Pi)$ must be bounded by a multiple of $K$. As shown in the proof of Claim \ref{cla:ApplyDehnTwists}, applying Dehn twists to $\gamma$ changes only the bridging boundary subwords. Therefore, the pair $(f\cdot \gamma, \Pi)$ also satisfy the conditions of Lemma \ref{lem:InteriorBdrySubwordIntBound}. Moreover,
 \[
  |w_\Pi(f \cdot \gamma)| \leq |w_\Pi(\gamma)|
 \]
 
Thus, there is a $\gamma' = f \cdot \gamma \in \Mod_\S \cdot \gamma$ and a pants decomposition $\Pi$ that satisfy both the conditions of Proposition \ref{prop:BdrySubwordIntBound} and of Lemma \ref{lem:BdrySubwordLKBounds}. As explained in Section \ref{sec:PantsNotOnTheList}, this means that there is a curve $\gamma'' \in \Mod_\S \cdot \gamma$ and a pants decomposition $\Pi_i$ on the representative list of pants decompositions, so that the pair $(\gamma'', \Pi_i)$ satisfy the conditions of Lemma \ref{lem:BdrySubwordLKBounds}. 

\end{proof}

\begin{cor}
 If $l(\gamma) \leq L$, and $\Pi$ is a pants decomposition so that $i(\gamma, \Pi) \leq c_\S \sqrt{i(\gamma, \gamma)}$, then
 \[
  |w_\Pi(\gamma)| \lesssim L
 \]
 where the constant depends on the metric $X$
\end{cor}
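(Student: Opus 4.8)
The plan is to reduce the corollary immediately to the first inequality proved inside Lemma~\ref{lem:BdrySubwordLKBounds}, and then trade the $\sqrt K$ term for an $L$ term. Concretely, set $K = i(\gamma,\gamma)$, so $\gamma \in \G^c(L,K)$. Since by hypothesis $l(\gamma)\le L$ and $i(\gamma,\Pi)\le c_\S\sqrt K$, the self-contained estimate established at the start of the proof of Lemma~\ref{lem:BdrySubwordLKBounds} — which uses only $l(\gamma)\le L$, the intersection bound $i(\gamma,\Pi)\le c_\S\sqrt K$, and the right-angled hexagon decomposition $H_\perp$ — applies verbatim and yields
\[
 |w_\Pi(\gamma)| \le \frac{18L}{l_X} + 18 c_\S \sqrt K .
\]
So everything comes down to bounding $\sqrt K$ by a constant multiple of $L$.

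For that I would invoke the quadratic lower bound on the length of a closed geodesic in terms of its self-intersection number: by Basmajian \cite[Theorem 1.1]{Basmajian13} there is a constant $c_1 = c_1(X) > 0$ with $l(\gamma) \ge c_1 \sqrt{i(\gamma,\gamma)}$ for every closed geodesic $\gamma$ — equivalently $i(\gamma,\gamma) \lesssim l(\gamma)^2$. Hence $\sqrt K \le l(\gamma)/c_1 \le L/c_1$ (the case $K=0$, i.e.\ $\gamma$ simple, being trivial). Substituting this into the previous display gives
\[
 |w_\Pi(\gamma)| \le \left( \frac{18}{l_X} + \frac{18 c_\S}{c_1} \right) L ,
\]
and since $l_X$ and $c_1$ depend only on $X$ while $c_\S$ depends only on $\S$, the coefficient depends only on the metric $X$. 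This is exactly $|w_\Pi(\gamma)| \lesssim L$ with the constant depending on $X$.

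There is essentially no obstacle: the content is entirely contained in the two inputs just cited, namely the hexagon-counting estimate inside Lemma~\ref{lem:BdrySubwordLKBounds} and Basmajian's lower bound. The only points requiring a little care are (i) checking that the intermediate estimate in Lemma~\ref{lem:BdrySubwordLKBounds} really is a statement about an \emph{arbitrary} $\gamma$ with $l(\gamma)\le L$ and $i(\gamma,\Pi)\le c_\S\sqrt{i(\gamma,\gamma)}$ — it is, since the Dehn-twisting part of that proof is only needed afterwards, to arrange the length bound to hold \emph{simultaneously} with Proposition~\ref{prop:BdrySubwordIntBound} — and (ii) bookkeeping the dependence of the final constant so that it is correctly recorded as depending on $X$ alone.
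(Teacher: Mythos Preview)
Your proof is correct and follows essentially the same approach as the paper: invoke the intermediate estimate from the proof of Lemma~\ref{lem:BdrySubwordLKBounds} to get $|w_\Pi(\gamma)| \le \frac{18L}{l_X} + 18c_\S\sqrt K$, then use Basmajian's inequality $i(\gamma,\gamma) \le \kappa L^2$ to absorb the $\sqrt K$ term into a constant times $L$. Your observation in point (i) --- that the relevant estimate applies to $\gamma$ itself before any Dehn twisting --- is exactly the right justification, and is in fact more explicit than the paper's one-line proof.
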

This follows from the previous lemma, and the fact that $i(\gamma, \gamma) \leq \kappa L^2$, where the constant $\kappa$, which depends only on the metric $X$, can be found in, for example, \cite{Basmajian13}.

\begin{rem}
 One can in fact show that for any geodesic $\gamma$ and pants decomposition $\Pi$,
 \[
  \frac{1}{27} l_{min} |w_\Pi(\gamma)| \leq l(\gamma) \leq 2l_{max}|w_\Pi(\gamma)|
 \]
 where $l_{min}$ and $l_{max}$ are the longest and shortest edge lengths in a right-angled hexagon decomposition of $\Pi$. We do not do this here, since it is not needed for the proof of the main theorem.
\end{rem}


\section{Proof of Theorem \ref{thm:Main}}
\label{sec:CountingWords}
We are now ready to prove Theorem \ref{thm:Main}. Recall from Section \ref{sec:MainIdeaOfProof} that $\{\Pi_1, \dots, \Pi_l\}$ was our representative list of pants decompositions of $\S$, containing one pants decomposition from each $\Mod_\S$ orbit. Then for any $L,K > 0$, we defined $\W_j(L,K)$ to be the set of all cyclic words $w_{\Pi_i}(\gamma)$ so that $\gamma$ is non-simple, and if $w_{\Pi_i}(\gamma) \in \W_j(L,K)$, then 
\begin{enumerate}
 \item \label{item:TotalSubwordLengthVsIntersection} If $w_{\Pi_i}(\gamma) = b_1s_1 \dots b_ns_n$, then
 \[
 \sum_{i=1}^n |b_i| \leq d_\S K
 \]
 where $d_\S$ depends only on $\S$.
 \item \label{item:TotalSubwordNumberVsIntersection} Furthermore,
 \[
  n \leq d_\S \sqrt K
 \]
 where $d_\S$ depends only on $\S$.
 \item \label{item:TotalSubwordLengthVsLength} And lastly,
 \[
  |w_\Pi(\gamma)| \leq d_X L + d_\S \sqrt K
 \]
 where $d_X$ depends only on $X$ and $d_\S$ depends only on $\S$.
\end{enumerate}
Then for any $\gamma \in \G^c(L,K)$, Proposition \ref{prop:BdrySubwordIntBound} and Lemma \ref{lem:BdrySubwordLKBounds} imply that there is a geodesic $\gamma' \in \Mod_\S \cdot \gamma$ and pants decomposition $\Pi_i$ in the representative list, so that $w_{\Pi_i}(\gamma') \in \W_j(L,K)$.

By choosing such a $\gamma'$ for each $\gamma \in \G^c(L,K)$, we can define a map 
\[
  \Oo(L,K) \rightarrow \bigcup_{i=1}^l W_i(L,K)
\]
In fact, this map is one-to-one. To see this, note that the letters in $w_\Pi(\gamma)$ can be concatenated into a curve freely homotopic to $\gamma$. So $w_\Pi(\gamma) = w_\Pi(\gamma')$ implies $\gamma = \gamma'$. In particular, two distinct $\Mod_\S$ orbits cannot be sent to the same cyclic word.

So to bound $\#\Oo(L,K)$, we first get a slightly obscure upper bound on the size of $\W_j(L,K)$ for each $j$. We then simplify the upper bound, and sum over all pants decompositions in the representative list to get Theorem \ref{thm:Main}.

\subsection{Bound on the Size of $\W_j(L,K)$}
The following lemma gives a general form for an upper bound on $\#\W_j(L,K)$. 

\begin{lem}
\label{lem:SizeWLK}
 Let $\S$ be a compact genus $g$ surface with $b$ geodesic boundary components. Fix $L,K >0$. Then the size of $\W_j(L,K)$ is bounded above as follows:
 \[
  \# \W_j(L,K) <  (8m)^N \cdot {M+N \choose N}
 \]
for 
\[
M = \min \{\lfloor d_XL + d_\S \sqrt K \rfloor, d_\S K\} \mbox{ and } N = \lfloor d_\S \sqrt K \rfloor
\]
where $m = 3g-3+2b$, $d_X$ depends only on the metric $X$ and $d_\S$ depends only on $\S$.
\end{lem}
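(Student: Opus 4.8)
The plan is to construct an explicit injection from $\W_j(L,K)$ into a set of fixed-length combinatorial templates whose cardinality is at most $(8m)^N\binom{M+N}{N}$, and then count the templates.

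\emph{Reductions.} Every curve counted by $\W_j(L,K)$ is non-simple, so each word $w=b_1s_1\cdots b_ns_n\in\W_j(L,K)$ has $n\ge 2$ boundary subwords (as observed before Definition \ref{def:TwistTie}); hence every $s_i$ is a single seam edge and $|w|=\sum_{i=1}^n|b_i|+n$. Combining the three defining conditions of $\W_j(L,K)$ gives $n\le N$ and $\sum_{i=1}^n|b_i|\le\min\{d_\S K,\ d_XL+d_\S\sqrt K\}$, which, being a nonnegative integer, is at most $M$. A cyclic word has at least one linear reading $b_1s_1\cdots b_ns_n$ (cut at the start of any boundary subword), so it suffices to bound the number of such linear words.

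\emph{The structural observation.} A boundary subword $b_i$ is a non-backtracking edge path lying entirely on one curve $\beta\in\Pi$; but $\beta$ is a circle carrying exactly the two boundary edges into which it was cut, so a non-backtracking path on $\beta$ merely winds around it in one of two directions. Hence $b_i$ is completely determined by its first oriented edge and its length $|b_i|$, and its first oriented edge is determined by the terminal vertex of the preceding seam edge $s_{i-1}$ (cyclically, $s_0:=s_n$) together with one binary ``which way around $\beta$'' bit $\epsilon_i\in\{0,1\}$. Thus $w$ is determined by $\big((s_1,\dots,s_n),\ (\epsilon_1,\dots,\epsilon_n),\ (|b_1|,\dots,|b_n|)\big)$. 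An Euler-characteristic count of the hexagon decomposition ($2(2g-2+b)$ hexagons, $6g-6+3b$ seam edges, $2m$ boundary edges, $m=3g-3+2b$) shows the number of oriented seam edges is at most $4m$ and $|\E|\le 8m$.

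\emph{The counting.} The only real subtlety is absorbing the variable number $n\le N$ of boundary subwords into a single product rather than a sum over $n$, which naively leaves a spurious factor of $N$. I pad: to a word with $n<N$ boundary subwords I assign the $N$-tuples obtained by setting $\tilde s_i:=s_n$, $\tilde\epsilon_i:=0$, $\tilde\ell_i:=0$ for $n<i\le N$. This is recoverable because a genuine word never has simultaneously $b_i=\emptyset$ and $s_i=s_{i-1}$: if $b_i$ is empty then $s_{i-1}s_i$ is traversed consecutively, and since a seam edge joins two distinct vertices and the path does not backtrack (Definition \ref{defi:AllowableWords_Surface}) we get $s_i\neq s_{i-1}$. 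So $n+1$ is read off as the least index $i\ge 2$ with $\tilde s_i=\tilde s_{i-1}$ and $\tilde\ell_i=0$ (and $n=N$ if none exists), after which each $b_i$ is reconstructed from $(s_{i-1},\epsilon_i,|b_i|)$ and then $w$ itself. Hence $w\mapsto\big((\tilde s_i),(\tilde\epsilon_i),(\tilde\ell_i)\big)$ is injective into (seam-edge tuples of length $N$)$\times\{0,1\}^N\times\{(\ell_1,\dots,\ell_N)\in\Z_{\ge 0}^N:\sum\ell_i\le M\}$, whose three factors have sizes at most $(4m)^N$, exactly $2^N$, and, by stars and bars ($N$ lengths and one slack variable), exactly $\binom{M+N}{N}$. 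Multiplying yields $\#\W_j(L,K)<(8m)^N\binom{M+N}{N}$.

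I expect the main obstacle to be precisely the bookkeeping in the last step: making the encoding of an arbitrary-length cyclic word as a fixed-length template genuinely injective — this is where the auxiliary fact $b_i=\emptyset\Rightarrow s_i\neq s_{i-1}$ is needed, and where one must verify the reconstruction of $n$, then of each $b_i$, and then of $w$ is unambiguous. Checking $|\E|\le 8m$ and ``at most $4m$ oriented seam edges'' for all $(g,b)$ is routine but should be stated carefully, as the bound is tight for closed surfaces.
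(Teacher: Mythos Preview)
Your argument is correct and follows essentially the same strategy as the paper: linearize the cyclic word, encode each block $b_is_i$ by a label from a set of size $O(m)$ together with the integer $|b_i|$, and then count labels by $(O(m))^N$ and length–sequences by stars and bars $\binom{M+N}{N}$. The only cosmetic difference is bookkeeping: the paper labels each block by the first oriented \emph{boundary} edge of $b_i$ (there are exactly $4m$ of these) and absorbs the spurious sum over $n\le N$ via $N\le 2^N$, whereas you label by the oriented \emph{seam} edge $s_i$ (at most $4m$ of these) together with a direction bit, and handle the variable $n$ by your padding trick instead; both routes produce the same factor $(8m)^N$.
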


\begin{proof}
Suppose $w = b_1 s_1 \dots b_n s_n \in \W_j(L,K)$. Note that Condition \ref{item:TotalSubwordLengthVsLength} actually implies that $\sum |b_i| \leq d_X L + d_\S \sqrt K$. So, in fact, Conditions \ref{item:TotalSubwordLengthVsIntersection} - \ref{item:TotalSubwordLengthVsLength} imply that
\begin{equation}
\label{eq:SequenceBounds}
 \sum_{i=1}^n |b_i| \leq M \text{ and } n \leq N
\end{equation}
where 
\[
 M = \{\lfloor d_XL + d_\S \sqrt K \rfloor, d_\S K\} \text{ and } N = \lfloor d_\S \sqrt K \rfloor
\]

To get a bound on $\# \W_j(L,K)$, we will bound the number of cyclic words $b_1 s_1 \dots b_n s_n$ satisfying the inequalities in (\ref{eq:SequenceBounds}). In fact, given a sequence $(b_1, b_2, \dots, b_n)$ of boundary subwords, there is at most one sequence $(s_1, \dots, s_n)$ so that $b_1 s_1 \dots b_n s_n$ is a cyclic word in $\W_{\Pi_j}$. So we just need to bound the number of sequences $(b_1, \dots, b_n)$ of boundary subwords with the above properties. 

Note that the sequence $(b_1, \dots, b_n)$ may have empty boundary subwords that just encode the vertex where $s_{i-1}$ and $s_i$ meet. Furthermore, the sequences are not cyclic, so more than one sequence corresponds to the same cyclic word. Since we only want an upper bound, we ignore this fact.

Because words in $\W_i(L,K)$ do not back-track, each boundary subword $b_i$ is uniquely determined by its initial boundary edge $x_i$ and its length $|b_i|$. In other words, a pair of sequences $(x_1, \dots, x_n)$ of boundary edges and $(l_1, \dots, l_n)$ of non-negative integers determines a sequence $(b_1, \dots, b_n)$, where $b_i$ has initial boundary edge $x_i$ and length $l_i = |b_i|$ for each $i$. (If $l_i = 0$ for some $i$, then $b_i$ is just the start point of the oriented edge $x_i$.)

Fix an $n < N$. As there are $4m$ oriented boundary edges, the number of length $n$ sequences $(x_1, \dots, x_n)$ is $(4m)^n$. As $n \leq N$, each sequence $(l_1, \dots, l_n)$ of lengths corresponds to at most $(4m)^N$ sequences $(b_1, \dots, b_n)$. So we can just count the number of sequences $(l_1, \dots, l_n)$ so that 
\[
  \sum_{i=1}^n l_i \leq M  \mbox{ and }  n \leq N .
\]

We need to count the number of ways to write all numbers smaller than $M$ as ordered sums of at most $N$ non-negative integers. (Note that this problem makes sense precisely because $M > N \geq 1$.) This is bounded above by $N{M + N \choose N}$. To see this, suppose we put down $M+N$ rocks in a row and choose $N$ of them to pick up. There are $M+N \choose N$ ways to do this. Then we are left with $N+1$ groups of rocks, some of which may be empty. So we get a way to write $M$ as an ordered sum of at $N+1$ non-negative integers. If we choose a number $n \leq N$, we can choose the first $n$ terms of this sum, $l_1 + \dots + l_n$ with $l_i \geq 0, \forall i$. There are $N$ ways to choose $n$. Setting $M' = l_1 + \dots + l_n$, we see that this is a way to write a number $M' \leq M$ as the sum of $n \leq N$ non-negative integers. 

Furthermore, for each number $M' \leq M$ and each way to write it as $M' = l_1 + \dots + l_n$, there is a way of choosing $N$ rocks out of a row of $M+N$ so that the first $n$ groups of rocks correspond to exactly this sum. For example, we can choose rocks numbers $l_1 + 1$, $l_2 + 2$, and so on, through $l_n +n$, and then choose rocks numbered $l_n + n+1, l_n+n + 2, \dots, l_n + N$, to get the sequence $l_1, l_2, \dots, l_n$. 

Thus, the number of sequences $(l_1, \dots, l_n)$ is at most
\[
 N{M+N \choose N}
\]
Therefore, the number of sequences $(b_1, \dots, b_n)$ is at most $ N (4m)^N \cdot {M+N \choose N}$. As $N \leq 2^N$ for all $N \geq 1$, we get that the number of sequences $(b_1, \dots, b_n)$ satisfying the two inequalities in (\ref{eq:SequenceBounds}) is at most
\[
 (8m)^N \cdot {M+N \choose N}
\]
\end{proof}

To get nice upper bounds on the size of $\Oo(L,K)$, we thus want to bound binomial coefficients of the form ${a+b \choose b}$.

\begin{lem}
\label{lem:ABchooseB}
 Suppose $a,b \in \N$ with $a \geq b \geq 1$. Then
 \[
  {a+b \choose b} \leq  e^{b \log \left ( 2e \frac ab \right )}
\]
\end{lem}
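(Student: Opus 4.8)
The plan is to bound $\binom{a+b}{b}$ using the standard estimate $\binom{n}{k} \leq (en/k)^k$ and then simplify using the hypothesis $a \geq b$. First I would recall the elementary inequality $\binom{n}{k} \leq \left(\frac{en}{k}\right)^k$, which follows from $\binom{n}{k} \leq \frac{n^k}{k!}$ together with $k! \geq (k/e)^k$ (the latter being a consequence of the Taylor expansion $e^k = \sum_j k^j/j! \geq k^k/k!$). Applying this with $n = a+b$ and $k = b$ gives
\[
\binom{a+b}{b} \leq \left(\frac{e(a+b)}{b}\right)^b.
\]

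Next I would use the hypothesis $a \geq b$ to control the factor $a+b$: since $b \leq a$, we have $a + b \leq 2a$, hence
\[
\binom{a+b}{b} \leq \left(\frac{2ea}{b}\right)^b = \left(2e \cdot \frac{a}{b}\right)^b.
\]
Finally, rewriting the right-hand side in exponential form,
\[
\left(2e \cdot \frac{a}{b}\right)^b = e^{b \log\left(2e \frac{a}{b}\right)},
\]
which is exactly the claimed bound.

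The only mild subtlety is justifying $k! \geq (k/e)^k$; this is completely standard (it is immediate from $e^k \geq k^k/k!$) and I would include it in one line rather than citing Stirling, since we only need the crude one-sided bound and not the full asymptotic. There is no real obstacle here — the lemma is a routine repackaging of the textbook binomial estimate, with the hypothesis $a \geq b$ used solely to absorb $a+b$ into $2a$. I would present the whole argument in three short displayed inequalities as above.
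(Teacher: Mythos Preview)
Your proof is correct and follows essentially the same route as the paper: both arguments reduce to the bound $\binom{a+b}{b} \leq (2a)^b/b!$ via $a+b \leq 2a$, and then use $b! \geq (b/e)^b$. The only cosmetic difference is that the paper invokes Stirling's formula (picking up an unneeded $1/\sqrt{2\pi b}$ factor which is then discarded), whereas you use the more elementary one-line bound $e^b \geq b^b/b!$; your version is slightly cleaner for exactly the reason you note.
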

\begin{proof}
We get this formula via the following computation.
 \begin{align*}
 {a+b \choose b} &= \frac{1 \cdots (a +b)}{(1 \cdots b)(1 \cdots a)}\\
 & = \frac{(a+1) \cdots (a+b)}{1 \cdots b}\\
 & \leq \frac{2a}{1} \cdot \frac{2a}{2} \cdots \frac{2a}{b}\\
 & = \frac{(2a)^b}{b!}
\intertext{Stirling's formula gives us, in particular, that $b! \geq \sqrt{2 \pi b}\left (\frac be \right)^b$ for all $b \geq 1$. So,}
\frac{(2a)^b}{b!} & \leq \frac{(2a)^b}{\sqrt{2 \pi b}\left (\frac be \right)^b} \\
& \leq \frac{1}{\sqrt{2 \pi b}} e^{b \log \left ( 2e \frac ab \right )}
\end{align*}
Since $\frac{1}{\sqrt{2 \pi b}} < 1$, we get,
\[
  {a+b \choose b} \leq  e^{b \log \left ( 2e \frac ab \right )}
\]
\end{proof}


\subsection{Proof of Theorem \ref{thm:Main}.}
 The proof of Theorem \ref{thm:Main} is just an application of Lemma \ref{lem:ABchooseB} to the upper bound we found on $\#\W_j(L,K)$ in Lemma \ref{lem:SizeWLK}.

\begin{proof}[Proof of Theorem \ref{thm:Main}]

We have that 
\[
 \#\Oo(L,K) \leq \sum \# \W_j(L,K) + \#\Oo(L,0)
\]
where $\#\Oo(L,0) = 1 + \lfloor \frac g 2 \rfloor$. Thus, by Lemma \ref{lem:SizeWLK},
\[
 \# \Oo(L,K) \leq l \cdot (8m)^N \cdot {M+N \choose N} +  \#\Oo(L,0)
\]
where 
\[
M = \min \{\lfloor d_XL + d_\S \sqrt K \rfloor, d_\S K\} \mbox{ and } N = \lfloor d_\S \sqrt K \rfloor
\]

Applying Lemma \ref{lem:ABchooseB}, this gives us that
\begin{align*}
 \# \Oo(L,K) & \leq l \cdot (8m)^N \cdot e^{N \log \left ( 4e \frac MN \right )}  +  \#\Oo(L,0) \\
  & = l \cdot e^{N \log \left ( 4e \frac MN + 8m \right )}  +  \#\Oo(L,0) 
\end{align*}

We expand the exponent $N \log \left ( 2e \frac MN + 8m \right )$ in the cases where $M = \lfloor d_XL + d_\S \sqrt K \rfloor$ and $d_\S K$, respectively. Note that $\lfloor d_\S \sqrt K \rfloor \geq \frac 12 d_\S \sqrt K$, as $K \geq 1$.

First, suppose $M = \lfloor d_XL + d_\S \sqrt K \rfloor$. Then
\begin{align*}
 N \log \left ( 2e \frac MN + 8m \right )  & \leq d_\S \sqrt K \log \left ( 4e \frac{d_XL + d_\S \sqrt K + 8m }{d_\S \sqrt K} \right ) \\
 & = d_\S \sqrt K \log \left ( 4e \frac{d_XL}{d_\S \sqrt K} + 3e \right )
\end{align*}
as $K \geq 1$ and we can assume $\frac{8m}{d_\S} \leq 1$. So we can define a new constant $\overline{d_X}$ depending only on $X$ so that in this case,
\[
 \# \Oo(L,K) \leq e^{d_\S \sqrt K \log \left ( \overline{d_X} \frac{L}{\sqrt K} + \overline{d_X} \right )}
\]
Since both $l$ and $\#\Oo(L,K)$ are constants depending only on $\S$, and we increase $d_\S$ if necessary to incorporate them into the exponent.

Now suppose that $M = d_\S K$. Then
\begin{align*}
 N \log \left ( 2e \frac MN + 8m \right )  & \leq d_\S \sqrt K \log \left ( 2e \frac{d_\S K + 8m }{d_\S \sqrt K} \right ) \\
 & = d_\S \sqrt K \log (2e(d_\S \sqrt K) + e)
\end{align*}
as $K \geq 1$ and we can assume $\frac{8m}{d_\S} \leq 1$. So we can define a new constant $\overline{d_\S}$ depending only on $\S$ so that in this case,
\[
 \# \Oo(L,K) \leq e^{\overline{d_\S} \sqrt K \log \overline{d_\S} \sqrt K}
\]
Since both $l$ and $\#\Oo(L,K)$ are constants depending only on $\S$, we can increase $\overline{d_\S}$ if necessary to incorporate them into the exponent.

As $\#\Oo(L,K)$ is bounded above by the smaller of the two bounds given here, we have the theorem.
\end{proof}

\bibliographystyle{alpha}
\bibliography{recount}

\def\cprime{$'$} \def\cprime{$'$}
\begin{thebibliography}{AGPS16}

\bibitem[AGPS16]{AGPS16}
T.~{Aougab}, J.~{Gaster}, P.~{Patel}, and J.~{Sapir}.
\newblock {Building hyperbolic metrics suited to closed curves and applications
  to lifting simply}.
\newblock {\em ArXiv e-prints}, March 2016.

\bibitem[Bas13]{Basmajian13}
Ara Basmajian.
\newblock Universal length bounds for non-simple closed geodesics on hyperbolic
  surfaces.
\newblock {\em J. Topol.}, 6(2):513--524, 2013.

\bibitem[BS85]{BS85}
Joan~S. Birman and Caroline Series.
\newblock Geodesics with bounded intersection number on surfaces are sparsely
  distributed.
\newblock {\em Topology}, 24(2):217--225, 1985.

\bibitem[Bus10]{Buser}
Peter Buser.
\newblock {\em Geometry and spectra of compact {R}iemann surfaces}.
\newblock Modern Birkh\"auser Classics. Birkh\"auser Boston, Inc., Boston, MA,
  2010.
\newblock Reprint of the 1992 edition.

\bibitem[CFP16]{CFP16}
P.~{Cahn}, F.~{Fanoni}, and B.~{Petri}.
\newblock {Mapping class group orbits of curves with self-intersections}.
\newblock {\em ArXiv e-prints}, March 2016.

\bibitem[FM12]{Primer}
Benson Farb and Dan Margalit.
\newblock {\em A primer on mapping class groups}, volume~49 of {\em Princeton
  Mathematical Series}.
\newblock Princeton University Press, Princeton, NJ, 2012.

\bibitem[{Gas}15]{Gaster15}
J.~{Gaster}.
\newblock {Lifting curves simply}.
\newblock {\em ArXiv e-prints}, January 2015.

\bibitem[Hub59]{Huber59}
Heinz Huber.
\newblock Zur analytischen {T}heorie hyperbolischen {R}aumformen und
  {B}ewegungsgruppen.
\newblock {\em Math. Ann.}, 138:1--26, 1959.

\bibitem[Lal96]{Lalley96}
Steven~P. Lalley.
\newblock Self-intersections of closed geodesics on a negatively curved
  surface: statistical regularities.
\newblock In {\em Convergence in ergodic theory and probability ({C}olumbus,
  {OH}, 1993)}, volume~5 of {\em Ohio State Univ. Math. Res. Inst. Publ.},
  pages 263--272. de Gruyter, Berlin, 1996.

\bibitem[Mal]{Malestein}
Justin Malestein.
\newblock Private communication.

\bibitem[Mar70]{MargulisPhD}
G.~A. Margulis.
\newblock {\em On some aspects of the theory of Anosov flows}.
\newblock PhD thesis, Moscow State University", 1970.

\bibitem[Mir08]{Mirzakhani08}
Maryam Mirzakhani.
\newblock Growth of the number of simple closed geodesics on hyperbolic
  surfaces.
\newblock {\em Ann. of Math. (2)}, 168(1):97--125, 2008.

\bibitem[{Mir}16]{Mir16}
M.~{Mirzakhani}.
\newblock {Counting Mapping Class group orbits on hyperbolic surfaces}.
\newblock {\em ArXiv e-prints}, January 2016.

\bibitem[MS04]{Margulis04}
Gregori~Aleksandrovitsch Margulis and Richard Sharp.
\newblock {\em On some aspects of the theory of Anosov systems}.
\newblock Springer Verlag, 2004.

\bibitem[Ree81]{Rees81}
Mary Rees.
\newblock An alternative approach to the ergodic theory of measured foliations
  on surfaces.
\newblock {\em Ergodic Theory Dynamical Systems}, 1(4):461--488 (1982), 1981.

\bibitem[Riv01]{Rivin01}
Igor Rivin.
\newblock Simple curves on surfaces.
\newblock {\em Geometriae Dedicata}, 87(1):345--360, 2001.

\bibitem[Riv12]{Rivin12}
Igor Rivin.
\newblock Geodesics with one self-intersection, and other stories.
\newblock {\em Adv. Math.}, 231(5):2391--2412, 2012.

\bibitem[Sap15a]{SapirU}
Jenya Sapir.
\newblock Bounds on the number of non-simple closed geodesics on a surface.
\newblock arXiv:1505.07171, 2015.

\bibitem[Sap15b]{SapirL}
Jenya Sapir.
\newblock Lower bound for the number of non-simple geodesics on surfaces.
\newblock arXiv:1505.06805, 2015.

\end{thebibliography}
\end{document}